\documentclass[11pt]{article}

\usepackage{authblk}  
\usepackage{fullpage} 
\usepackage{graphicx}
\usepackage{amsfonts,amsmath,amssymb,amsbsy,amsthm,enumitem}
\setlist[description]{leftmargin=3mm,labelindent=0mm}
\usepackage[utf8]{inputenc}
\usepackage[english]{babel}
\usepackage{verbatim}
\usepackage{tikz}
\usepackage{subcaption}
\usetikzlibrary{calc,positioning,decorations.pathmorphing,decorations.pathreplacing}
\usepackage{./Figures/figstyle}
\newcommand{\calV}{\mathcal{V}}
\newcommand{\calP}{\mathcal{P}}

\newcommand{\tw}{\mathrm{tw}}

\usepackage[normalem]{ulem}

\sloppy

\sloppy

\newtheorem{theorem}{Theorem}[section]

\newtheorem{conjecture}[theorem]{Conjecture}
\newtheorem{proposition}[theorem]{Proposition}
\newtheorem{lemma}[theorem]{Lemma}
\newtheorem{corollary}[theorem]{Corollary}
\newtheorem*{definition}{Definition}
\newtheorem{remark}{Remark}
\newtheorem{claim}{Claim}
\newtheorem*{claimsn}{Claim}

\usepackage{marginnote}

\title{On Tuza's conjecture for triangulations \\ and graphs with small treewidth} 

\author[1]{Fábio Botler}
\author[2]{Cristina G. Fernandes}
\author[2]{Juan Guti\'errez}
\affil[1]{Programa de Engenharia de Sistemas e Computação, COPPE,
Universidade~Federal~do~Rio~de~Janeiro,~Brazil}
\affil[2]{Departamento de Ci\^encia da Computa\c c\~ao, Universidade de  S\~{a}o Paulo, Brazil}
\setcounter{Maxaffil}{0}

\sloppy
\begin{document}
\maketitle

\begin{abstract} 
	Tuza (1981)\nocite{Tuza81} conjectured that the size~\(\tau(G)\) of a minimum set of
	edges that intersects every triangle of a graph~\(G\) is at most twice the size~\(\nu(G)\) 
	of a maximum set of edge-disjoint triangles of~\(G\).
	In this paper we present three results regarding Tuza's Conjecture.
	We verify it for graphs with treewidth at most~\(6\);
	we show that \(\tau(G)\leq \frac{3}{2}\,\nu(G)\) for every planar triangulation \(G\) different from $K_4$;
	and that \(\tau(G)\leq\frac{9}{5}\,\nu(G) + \frac{1}{5}\) if \(G\) is a maximal graph with treewidth 3.
        Our first result strengthens a result of Tuza, implying that \(\tau(G) \leq 2\,\nu(G)\) for every \(K_8\)-free chordal graph \(G\).
\end{abstract}

\section{Introduction}

In this paper all graphs considered are simple and the notation and terminology are standard.
A \emph{triangle transversal} of a graph~\(G\) is a set 
of edges of~\(G\) whose removal results in a triangle-free graph;
and a \emph{triangle packing} of~\(G\) is a set 
of edge-disjoint triangles of~\(G\).
We denote by~\(\tau(G)\) (resp.~\(\nu(G)\)) the size of a minimum triangle transversal 
(resp.~maximum triangle packing) of~\(G\).
Tuza~\cite{Tuza81} posed the following conjecture.

\begin{conjecture}[Tuza, 1981]\label{conjecture:tuza}
  For every graph~\(G\), we have~\(\tau(G)\leq 2\,\nu(G)\).
\end{conjecture}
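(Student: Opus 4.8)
The plan is to start from a maximum triangle packing \(\calP\) of \(G\), so that \(|\calP|=\nu(G)\), and to build a transversal of size at most \(2\,\nu(G)\) by spending a budget of exactly two edges on each triangle of \(\calP\). Concretely, from each \(T\in\calP\) I would choose one edge \(e_T\) to \emph{keep} and place its other two edges into the transversal; this produces a candidate transversal of exactly \(2\,\nu(G)\) edges, and, crucially, only edges that lie in packing triangles are ever removed. By maximality of \(\calP\), every triangle of \(G\) not in \(\calP\) shares an edge with some \(T\in\calP\). Hence a triangle survives the removal only if all three of its edges are kept, and the whole problem reduces to a purely combinatorial selection: choose the kept edges \(\{e_T\}_{T\in\calP}\) so that no triangle of \(G\) has all three of its edges among the kept ones.

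To guide the selection I would lean on the fractional relaxation. Krivelevich proved both \(\tau(G)\le 2\,\nu^*(G)\) and \(\tau^*(G)\le 2\,\nu(G)\), where \(\tau^*=\nu^*\) is the common optimum of the dual triangle-covering and triangle-packing linear programs. Each of these misses the conjecture by a single factor — \(\tau\ge\tau^*\) on one side and \(\nu^*\ge\nu\) on the other — and naively chaining them yields only \(\tau\le 4\,\nu\). The hoped-for route is to argue that the triangle-cover LP has essentially no integrality gap in the regime that matters, so that an optimal fractional transversal of weight \(\tau^*\le 2\,\nu\) can be rounded to an integral one of the same cost; equivalently, to push the keep/remove selection above through directly, processing the triangles of \(\calP\) in an order dictated by how heavily their edges are shared with triangles outside the packing.

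The main obstacle — and precisely the reason the conjecture is still open in full generality — is controlling triangles \emph{outside} \(\calP\) that meet several packing triangles at once. A single foreign triangle can impose incompatible keep-decisions on two or three different members of \(\calP\), and chains of such constraints can propagate across the whole graph, so no purely local selection rule is known to succeed on every input; the current record, Haxell's bound of roughly \(2.87\,\nu\), stops well short of the conjectured \(2\,\nu\). This is exactly the integrality gap that the restricted settings of this paper close by supplying the missing structural control: a sparse tree-decomposition (bounded treewidth) or a planar embedding limits how foreign triangles can overlap the packing, letting a discharging scheme or a bounded-width dynamic program finish the argument. A proof of the unrestricted statement would instead require a global argument — most plausibly a discharging or LP-rounding scheme tailored to the triangle hypergraph — that tames these overlaps uniformly across all graphs; devising such an argument, rather than performing any single calculation, is the crux.
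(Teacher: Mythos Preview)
This statement is Tuza's Conjecture, which is \emph{open}; the paper does not prove it and does not claim to. It records the conjecture and then establishes it only for the restricted classes announced in the abstract (treewidth at most~6, planar triangulations, 3-trees). So there is no ``paper's own proof'' of this statement to compare against.

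Your proposal is, by your own admission, not a proof either: you outline the natural two-edges-per-packing-triangle heuristic, invoke Krivelevich's fractional bounds, and then correctly identify the obstruction --- foreign triangles that straddle several members of the packing can force incompatible choices of the kept edge~\(e_T\), and no rule is known that resolves these conflicts globally. That obstruction is real, not merely a gap in exposition: there exist graphs in which \emph{every} choice of one kept edge per packing triangle leaves some triangle of \(G\) uncovered, so the budget of exactly \(2\,\nu(G)\) edges drawn solely from packing triangles cannot always be spent to form a transversal (otherwise the conjecture would already be a theorem). Likewise, the hoped-for LP rounding step fails in general because the triangle-cover LP does have an integrality gap. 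What the paper actually does in its restricted settings is different in kind from your sketch: rather than trying to select edges globally from a maximum packing, it uses Puleo's reducibility framework and a tree-decomposition to locate, in any minimal counterexample, a small configuration that can be peeled off while maintaining the \(\tau\le 2\,\nu\) inequality. If you want a proof here, you would need either such a structural reduction or a genuinely new global argument; the outline you gave does not supply one.
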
%

This conjecture was verified for many classes of graphs.
In particular, Tuza~\cite{Tuza90} verified it for planar graphs, 
and Haxell and Kohayakawa~\cite{Haxell98} proved that if \(G\) is a tripartite graph, then \(\tau(G)\leq 1.956\,\nu(G)\).
The reader may refer to~\cite{BaronKahn16,ChenDiaoHuTang16,Cui09,Haxell99,Haxell12,Krivelevich95} for more results concerning Tuza's Conjecture.
In this paper we present three results regarding Tuza's Conjecture.
We verify it for graphs with treewidth at most~\(6\);
and we show that \(\tau(G)\leq \frac{3}{2}\,\nu(G)\) for every planar triangulation~\(G\) different from $K_4$;
and that \(\tau(G)\leq\frac{9}{5}\,\nu(G) + \frac{1}{5}\) if \(G\) is a \emph{\(3\)-tree},
i.e., a graph obtained from \(K_3\) by successively choosing a triangle in the graph and adding a new vertex adjacent to its three vertices.

Puleo~\cite{Puleo15} introduced a set of tools for dealing with graphs that contain vertices of small degree (Lemma~\ref{lemma:puleo}),
and verified Tuza's Conjecture for graphs with maximum average degree less than~\(7\),
i.e., for graphs in which every subgraph has average degree less than \(7\). 
In this paper, we extend Puleo's technique (Lemma~\ref{lemma:tuza-tw6-main}) 
in order to prove Tuza's Conjecture for graphs with treewidth at most~\(6\) (Theorem~\ref{thm:tuza-tw-6}).
Note that there are graphs with treewidth at most~\(6\) and 
maximum average degree at least~\(7\) (Figure~\ref{fig:tuza-partial6-tree-deg>7}).
In particular, any graph with treewidth at most~\(6\) that contains such a graph 
also has maximum average degree at least~\(7\).
In particular, this result strengthens a result of Tuza, implying that \(\tau(G) \leq 2\,\nu(G)\) for every \(K_8\)-free chordal graph \(G\).

\begin{figure}[h]
  \centering
  \begin{tikzpicture}[scale = .8]

  \foreach \i in {1,...,3}{
    \node(x\i) [black vertex] at (-30+\i*120:2) {};
  }	
  \foreach \i in {0,...,9}{
    \node(y\i) [black vertex] at (\i*60:1) {};	
  }
  
  \foreach \i in {1,...,5}{
    \foreach \j in {\i,...,6}{
      \draw[edge] (y\i) -- (y\j);
    }
  }
  
  \foreach \i in {1,...,3}{
    {\pgfmathtruncatemacro{\j}{2*\i - 0}
      \draw[edge] (x\i) to [bend left=0] (y\j);}
    {\pgfmathtruncatemacro{\j}{2*\i - 2}
      \draw[edge] (x\i) to [bend left=15] (y\j);}
    {\pgfmathtruncatemacro{\j}{2*\i - 1}
      \draw[edge] (x\i) to [bend left=0] (y\j);}
    {\pgfmathtruncatemacro{\j}{2*\i + 3}
      \draw[edge] (x\i) to [bend right=0] (y\j);}
    {\pgfmathtruncatemacro{\j}{2*\i + 1}
      \draw[edge] (x\i) to [bend right=15] (y\j);}
    {\pgfmathtruncatemacro{\j}{2*\i + 2}
      \draw[edge] (x\i) to [bend right=0] (y\j);}
  }

\end{tikzpicture}
  \caption{A graph with treewidth $6$ and average degree $22/3$.}
  \label{fig:tuza-partial6-tree-deg>7}
\end{figure}
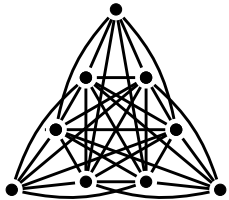

In another direction, motivated by~\cite{Haxell98}, we show that,
for certain classes of graphs, the ratio \(\tau(G)/\nu(G)\) can be bounded by a constant smaller than \(2\).
More specifically, we show that, if \(G\) is a planar triangulation different from $K_4$, then \(\tau(G) \leq \frac32\,\nu(G)\) (Theorem~\ref{thm:tuza-triangulations}) and, if \(G\) is a \(3\)-tree, then \(\tau(G)\leq \frac{9}{5}\,\nu(G) + \frac{1}{5}\) (Theorem~\ref{thm:tuza-3-trees}).

This paper is organized as follows.
In Section~\ref{section:tree-decompositions}, we establish the notation and present some auxiliary results used throughout the paper.
In Section~\ref{section:tuza-partial-6-trees}, we verify Tuza's Conjecture for graphs with treewidth at most~\(6\)
and, in Sections~\ref{section:tuza-planar-triangulation} and~\ref{section:tuza-3-trees}, 
we study planar triangulations and \(3\)-trees, respectively.
Finally, in Section~\ref{section:conclusion} we present some concluding remarks.

\section{Rooted tree decompositions}\label{section:tree-decompositions}

In this section we present most of the notation and some auxiliary results regarding tree decompositions.
(See~\cite[Chp.~12]{Diestel10} for an overview on this concept.)
A \emph{rooted tree} is a pair~$(T,r)$, where~$T$ is a tree and~$r$ is a vertex of~$T$.
Given~${t \in V(T)}$, if~\(t'\) is a vertex in the (unique) path in~\(T\) that joins~\(r\) and~\(t\), 
then we say that~\(t'\) is an \emph{ancestor} of~\(t\).
Every vertex in~$T$ that has~$t$ as its ancestor is called a \emph{descendant} of~$t$.
If~$t \neq r$, then the \emph{parent} of~$t$, denoted by~$p(t)$,
is the ancestor of~$t$ that is adjacent to~$t$.
The \emph{successors} of~$t$ are the vertices whose parent is~$t$,
and we denote the set of successors of~\(t\) by~$S_T(t)$.
The \emph{height} of~$t$, denoted by~$h_T(t)$, is the length (number of edges)
of a longest path that joins~$t$ to a descendant of~$t$.
When~$T$ is clear from the context, we simply write~$S(t)$ and~$h(t)$.

\begin{definition}
A \emph{tree decomposition}  of a graph~$G$
is a pair~$\mathcal{D}=(T, \calV)$ consisting of a tree~$T$ and 
a collection $\calV = \{V_t\subseteq V(G)\colon t \in V(T)\}$, 
satisfying the following conditions: 
\begin{enumerate}[label = {\rm (T\arabic*)}]
\item $\bigcup_{t\in V(T)}V_t = V(G);$ 
\item for every~$uv \in E(G)$, there exists a $t$ such that~$u,v \in V_t;$
\item if a vertex~$v \in V_{t_1} \cap V_{t_2}$ for $t_1 \neq t_2$, 
then \(v\in V_t\) for every \(t\) in the path of~\(T\) that joins~\(t_1\) and~\(t_2\).
In other words, for any fixed vertex \(v\in V(G)\), 
the subgraph of \(T\) induced by the vertices in sets $V_t$ that contain \(v\) is connected.
\end{enumerate}
The elements in \(\calV\) are called the \emph{bags} of \(\mathcal{D}\),
and the vertices of \(T\) are called \emph{nodes}.
The \emph{width} of~$\mathcal{D}$ is the number 
$\max\{|V_t|\colon t\in V(T)\}-1$,
and the \emph{treewidth}~$\tw(G)$ of~$G$ is 
the width of a tree decomposition of~$G$ with minimum width.
Let~$G$ be a graph with treewidth~$k$.
If~$|V_t|=k+1$ for every~$t\in V(T)$,
and~$|V_t \cap V_{t'}|=k$ for every~$tt' \in E(T)$,
then we say that~$(T, \mathcal{V})$ is a \emph{full tree decomposition} of~$G$.
\end{definition}

Note that 3-trees are maximal graphs with treewidth~3.  
Indeed, the construction of a 3-tree~\(G\) describes a tree decomposition of \(G\) whose bags are exactly the \(K_4\)'s formed by the addition of each new vertex.

The following result was proved by Bodlaender~\cite{Bodlaender98} (see also Gross~\cite{Gross14}).
\begin{proposition}\label{prop:full-tree-decomposition}
  Every graph admits a full tree decomposition.
\end{proposition}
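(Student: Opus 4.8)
The plan is to start from an arbitrary tree decomposition $(T,\calV)$ of $G$ of minimum width $k=\tw(G)$ and to massage it, in two phases, into a full one while never increasing its width.

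In the first phase I would enlarge the small bags until every bag has exactly $k+1$ vertices. At least one bag has size $k+1$ (otherwise the width would be less than $k$); if some bag is strictly smaller, then connectivity of $T$ yields an edge $tt'$ with $|V_t|=k+1$ and $|V_{t'}|\le k$, so $V_t\setminus V_{t'}\neq\emptyset$. Adding one vertex of $V_t\setminus V_{t'}$ to $V_{t'}$ preserves (T1) and (T2) trivially, preserves (T3) because the set of nodes whose bag contains the added vertex only gains the node $t$, which is adjacent to the node $t'$ already in that set, and does not increase the width. Since $\sum_{t\in V(T)}|V_t|$ strictly increases at each such step while staying bounded by $(k+1)\,|V(T)|$, after finitely many steps all bags have size exactly $k+1$.

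In the second phase I would repair the edges. First contract every edge $tt'$ with $V_t=V_{t'}$ (a standard operation that preserves the three axioms and all bag sizes), so that afterwards $|V_t\cap V_{t'}|\le k$ for every edge of $T$. Now, whenever some edge $tt'$ satisfies $|V_t\cap V_{t'}|=j<k$, the sets $V_t\setminus V_{t'}$ and $V_{t'}\setminus V_t$ both have size $d:=k+1-j\ge 2$; enumerate them as $\{a_1,\dots,a_d\}$ and $\{b_1,\dots,b_d\}$. Replace the edge $tt'$ by a path of new nodes $t=s_0,s_1,\dots,s_d=t'$ with bags $V_{s_i}=(V_t\cap V_{t'})\cup\{b_1,\dots,b_i\}\cup\{a_{i+1},\dots,a_d\}$, which interpolates between $V_{s_0}=V_t$ and $V_{s_d}=V_{t'}$ by swapping one vertex at a time. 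Each $V_{s_i}$ has size $j+d=k+1$, each pair of consecutive bags differs by a single swap $a_i\leftrightarrow b_i$ and hence meets in exactly $k$ vertices, and (T3) is checked directly: each vertex of $V_t\cap V_{t'}$ lies in all the $V_{s_i}$, each $a_i$ lies precisely in the prefix $s_0,\dots,s_{i-1}$, and each $b_i$ lies precisely in the suffix $s_i,\dots,s_d$, so the subtree of every vertex remains connected once one notes that the old subtree of $a_i$ lies entirely on the $t$-side of the edge $tt'$ and that of $b_i$ on the $t'$-side. This step destroys one bad edge, introduces only good ones, and leaves every other edge untouched, so iterating it over the (finitely many) bad edges yields a full tree decomposition of $G$.

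The only point requiring real care is the verification of (T3) through both phases — in particular that, after each interpolation, the nodes whose bag contains a fixed vertex still induce a subtree of the new tree — together with the termination of each phase; the rest is bookkeeping.
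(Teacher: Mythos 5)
Your proof is correct. Note that the paper does not prove Proposition~\ref{prop:full-tree-decomposition} at all: it simply cites Bodlaender~\cite{Bodlaender98} (see also Gross~\cite{Gross14}), so there is no in-paper argument to compare against. What you give is a self-contained elementary proof, and it is the standard one: starting from a width-$\tw(G)$ decomposition, pad the small bags along an edge where a size-$(k+1)$ bag meets a smaller one (your potential $\sum_t|V_t|$ handles termination), then contract edges with equal bags and subdivide each remaining edge $tt'$ with $|V_t\cap V_{t'}|=j<k$ into a path of bags that swap one vertex of $V_t\setminus V_{t'}$ for one of $V_{t'}\setminus V_t$ at a time. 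Your verification of (T3) in both phases is the right place to focus and is done correctly: in the padding step the node set of the added vertex grows by a node adjacent to one it already contains (you wrote that the set ``gains the node $t$'' — it gains $t'$, the node whose bag was enlarged, adjacent to $t$; a harmless slip), and in the interpolation step the observation that the old subtree of each $a_i$ lies on the $t$-side and that of each $b_i$ on the $t'$-side is exactly what makes the prefix/suffix structure of the new path glue correctly. Termination of the second phase (one bad edge destroyed, only good edges created) is also fine, so the argument is complete and could serve as a proof the paper merely outsources to the literature.
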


A triple~$(\calV,T,r)$ is a \emph{rooted tree decomposition} of a graph~$G$ 
if~$(\calV,T)$ is a full tree decomposition of~$G$, $(T,r)$ is a rooted tree, 
and~$V_t \cap V_{p(t)} \neq V_t \cap V_{t'}$ 
for every~${t \in V(T) \setminus \{r\}}$ and~$t' \in S(t)$.
In what follows, we show that 
every full tree decomposition can be modified into 
a rooted tree decomposition with an arbitrary root $r$.

\begin{proposition}\label{prop:root-tree-dec}
  Every graph admits a rooted tree decomposition.
\end{proposition}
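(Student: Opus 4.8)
The plan is to take any full tree decomposition $(\calV,T)$ of $G$, which exists by Proposition~\ref{prop:full-tree-decomposition}, fix an arbitrary root $r$, and then repeatedly perform a local surgery on $T$ that destroys a violation of the extra compatibility condition while keeping the decomposition full and keeping $r$. Call a pair $(t,t')$ with $t\neq r$ and $t'\in S(t)$ \emph{bad} if $V_t\cap V_{p(t)}=V_t\cap V_{t'}$; the goal is to reach a full tree decomposition rooted at $r$ with no bad pair, which is exactly a rooted tree decomposition. For a bad pair, put $X:=V_t\cap V_{p(t)}=V_t\cap V_{t'}$; fullness forces $|X|=k$ and $V_t=X\cup\{a\}$, $V_{p(t)}=X\cup\{c\}$, $V_{t'}=X\cup\{b\}$, with $a\notin\{b,c\}$ since the tree edges $tp(t)$ and $tt'$ meet exactly in $X$.

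To eliminate a bad pair $(t,t')$ I would distinguish two cases. If $V_{t'}\neq V_{p(t)}$, I would \emph{slide} $t'$ upward: delete the edge $tt'$ and insert the edge $p(t)t'$, leaving every bag untouched. If $V_{t'}=V_{p(t)}$, a slide would create an edge between two equal bags and break fullness, so instead I would \emph{contract} $t'$ into $p(t)$: delete the node $t'$ (whose bag is a duplicate of $V_{p(t)}$) and re-attach each successor of $t'$ to $p(t)$. In either case the two things to verify are that the result is still a tree decomposition and that it is still full. Conditions (T1) and (T2) are immediate because no bag shrinks; (T3) reduces to a short case analysis on which of $p(t),t,t'$ belong to $N_v:=\{s:v\in V_s\}$ for a fixed vertex $v$, using that in $T$ the path from $t'$ to $p(t)$ passes through $t$, so a connected $N_v$ cannot meet both sides of a deleted edge without already containing an endpoint of it. Fullness of the one or more new edges is easy: $V_{p(t)}\cap V_{t'}=X$ in the slide case, and $V_{p(t)}\cap V_{t'_i}=V_{t'}\cap V_{t'_i}$ is a former full edge of $T$ in the contraction case.

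For termination I would use the potential $\Phi:=\sum_{s\in V(T)}d(s)$, where $d(s)$ is the distance in $T$ from $s$ to $r$. A slide moves the entire subtree rooted at $t'$ one step closer to $r$; a contraction removes $t'$ and moves the subtree rooted at each successor of $t'$ two steps closer to $r$. Hence each surgery decreases the non-negative integer $\Phi$ by a positive amount, while the root is never removed and the bag family only ever loses duplicates (so $k$ stays fixed). Therefore after finitely many steps no bad pair is left, and the final full tree decomposition together with $r$ is a rooted tree decomposition of $G$. I expect the case $V_{t'}=V_{p(t)}$ to demand the most care: there one is forced to re-home all successors of $t'$, and must re-check (T3) and fullness even though $t'$ and $p(t)$ are not adjacent in $T$, rather than carry out a single clean edge move.
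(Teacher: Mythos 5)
Your proposal is correct and takes essentially the same route as the paper: the paper also removes the edge $tt'$ of a violating pair and reattaches $t'$ to $p(t)$, with termination guaranteed by choosing the decomposition minimizing $\sum_{t\in V(T)}|P_T(t)|$, which is exactly your potential $\Phi$. The only difference is your separate contraction case $V_{t'}=V_{p(t)}$, which is in fact vacuous (writing $X=V_t\cap V_{p(t)}=V_t\cap V_{t'}$, condition (T3) applied to the unique vertex of $V_{t'}\setminus X$ along the path $t'\,t\,p(t)$ forces $V_{t'}\neq V_{p(t)}$), though including it does no harm.
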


\begin{proof}
  Let~\((T,\calV,r)\) be a triple where~$(T,\calV)$ is a full tree decomposition of~$G$ 
  and~\((T,r)\) is a rooted tree, with $r$ chosen arbitrarily in~$T$.
  Let~\(P_T(t)\) be the (unique) path in~\(T\) that joins~\(r\) and~\(t\).
  Choose such \((T,\calV,r)\) with $\sum_{t \in V(T)}{|P_T(t)|}$ minimum.
  We claim that~$(\calV,T,r)$ is a rooted tree decomposition.
  Suppose, for a contradiction, that there exist two nodes~${t \in V(T)} \setminus \{r\}$ 
  and~${t' \in S(t)}$ such that~${V_t \cap V_{p(t)} = V_t \cap V_{t'}}$.
  Let~$T'$ be the tree obtained from \(T\) by removing the edge \(tt'\) and 
  adding the edge \(p(t)t'\), that is, \(T'\) is such that~$V(T)=V(T')$
  and~${E(T')=E(T) \setminus \{tt'\} \cup \{p(t)t'\}}$.
  Note that~$(T',\calV)$ is a full tree decomposition of~$G$, and
  that~$|P_{T'}(t'')|=|P_{T}(t'')|-1$ for every descendant~$t''$ of~$t'$.
  Hence~${\sum_{t \in V(T')}{|P_{T'}(t)|} < \sum_{t \in V(T)}{|P_{T}(t)}|}$,
  a contradiction to the choice of~$T$.
\end{proof}

For a rooted tree decomposition~\((\calV,T,r)\) of a graph~\(G\) and a node \({t\in V(T) \setminus \{r\}}\),
the (unique) vertex in~\(V_t\setminus V_{p(t)}\) is the \emph{representative} of~\(t\).
We leave undefined the representative of~\(r\).
 
For a vertex~\(u\in V(G)\), we denote by~\(N_G(u)\) the set of neighbors of~\(u\). 
When $G$ is clear from the context, we write simply $N(u)$. 
In what follows, we denote by~\(d(u)\) the degree of~\(u\), 
by~\(N[u]\) the closed neighborhood \({N(u)\cup\{u\}}\) of \(u\), 
and by $\Delta(G)$ the maximum degree of~$G$.

\begin{remark}\label{remark:leaf-degree}
  If \(y\) is the representative of a leaf~\(t\) of a rooted tree decomposition of a graph~$G$, then~\({N_G(y)\subseteq V_t}\).
\end{remark}

\begin{proof}
  Suppose, for a contradiction, that there is a node ~$y' \in N_G(y) \setminus V_t$.
  By~(T2), there is a bag~$V_{t'}$ such that~$y,y' \in V_{t'}$,
  and hence, by~(T3), we have that $y \in V_{p(t)}$, a contradiction.
\end{proof}

\section{Graphs with treewidth at most 6}\label{section:tuza-partial-6-trees}

In this section, we verify Conjecture~\ref{conjecture:tuza} for graphs with treewidth at most~\(6\)
by extending the technique introduced by 
Puleo~\cite{Puleo15} for dealing with vertices of small degree. 
For that, we use the following definitions (see also~\cite{Puleo15}).

\begin{definition}
Given a graph \(G\), 
a nonempty set~$V_0 \subseteq V(G)$ is called \emph{reducible} if there is a set~${X \subseteq E(G)}$
and a set~$Y$ of edge-disjoint triangles in~$G$ such that the following conditions hold:
\begin{enumerate}[label = (\roman*)]
\item $|X| \leq 2|Y|$;
\item \(X \cap E(A) \neq \emptyset\) for every triangle \(A\) in $G$ that contains a vertex of~$V_0$; and
\item if \(u,v \notin V_0\) and \(uv \in E(A)\) for some \(A \in Y\), then \(uv \in X\).
\end{enumerate}
In this case we say that \((V_0,X,Y)\) is a \emph{reducing triple for \(G\)} and,
equivalently, we say that~$V_0$ is \emph{reducible using}~$X$ and~$Y$.
If there is no reducible set for~\(G\), we say that~\(G\) is \emph{irreducible}.
\end{definition}

The following lemma comes naturally (see~\cite[Lemma 2.2]{Puleo15}).

\begin{lemma}\label{lemma:tuza-partial6-reducible-puleo}
  Let \((V_0,X,Y)\) be a reducing triple for a graph \(G\) and consider ${G' = G-X-V_0}$. 
  If~${\tau(G')\leq 2\,\nu(G')}$, then~$\tau(G)\leq 2\,\nu(G)$.
\end{lemma}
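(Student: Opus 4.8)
The plan is to combine a triangle transversal of $G'$ with the edge set $X$ to get a transversal of $G$, and simultaneously combine a triangle packing of $G'$ with the triangles $Y$ to get a packing of $G$, and then check the arithmetic. First I would set $G' = G - X - V_0$ and take a minimum triangle transversal $X'$ of $G'$ (so $|X'| = \tau(G')$) and a maximum triangle packing $Y'$ of $G'$ (so $|Y'| = \nu(G')$). The candidate transversal of $G$ is $X \cup X'$, and the candidate packing of $G$ is $Y \cup Y'$.

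Next I would verify that $X \cup X'$ is indeed a triangle transversal of $G$. Let $A$ be any triangle of $G$. If $A$ contains a vertex of $V_0$, then condition (ii) of the reducing triple gives $X \cap E(A) \neq \emptyset$, so $A$ is hit. Otherwise all three vertices of $A$ lie in $V(G) \setminus V_0$, so $A$ is a triangle of $G - V_0$; since $X' $ is a transversal of $G' = (G - V_0) - X$, either $X \cap E(A) \neq \emptyset$ or $X' \cap E(A) \neq \emptyset$. In all cases $A$ is hit by $X \cup X'$.

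Then I would verify that $Y \cup Y'$ is a triangle packing of $G$, i.e.\ that its triangles are pairwise edge-disjoint. The triangles within $Y$ are edge-disjoint by hypothesis, and those within $Y'$ are edge-disjoint since $Y'$ is a packing of $G'$. The only thing to rule out is a shared edge $uv$ between some $A \in Y$ and some $B \in Y'$. Since $B$ is a triangle of $G' = G - X - V_0$, both $u$ and $v$ avoid $V_0$ and the edge $uv$ avoids $X$. But $uv \in E(A)$ with $u,v \notin V_0$, so condition (iii) forces $uv \in X$, contradicting that $B$'s edges avoid $X$. Hence $Y$ and $Y'$ are edge-disjoint, and $Y \cup Y'$ is a packing. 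I should also note $Y'$ uses only edges of $G'$, which are edges of $G$ not in $X$, so no triangle of $Y'$ uses an edge of $X$; this keeps things clean but is subsumed by the argument above.

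Finally, the arithmetic: $\tau(G) \leq |X \cup X'| \leq |X| + |X'| \leq 2|Y| + \tau(G')$, and by hypothesis $\tau(G') \leq 2\,\nu(G') = 2|Y'|$, so $\tau(G) \leq 2|Y| + 2|Y'| = 2\,(|Y| + |Y'|) = 2\,|Y \cup Y'| \leq 2\,\nu(G)$, where the last equality uses that $Y$ and $Y'$ are disjoint as \emph{sets} of triangles (which follows from edge-disjointness, since a triangle has edges) and the final inequality uses that $Y \cup Y'$ is a packing of $G$. This is essentially bookkeeping; the only genuinely delicate point — and the one I would be most careful about — is the edge-disjointness of $Y$ and $Y'$, where condition (iii) of the reducing triple is exactly what is needed, so I would make sure to spell that step out in full.
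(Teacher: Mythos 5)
Your proof is correct and is essentially the standard argument: the paper itself omits the proof, deferring to Puleo's Lemma~2.2, whose proof is exactly this combination of $X\cup X'$ as a transversal and $Y\cup Y'$ as a packing, with condition~(iii) guaranteeing edge-disjointness and conditions~(i)--(ii) giving the count. Your handling of the key point (why no triangle of $Y'$ can share an edge with a triangle of $Y$) is exactly right.
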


A graph~\(G\) is \emph{robust} if, for every~\(v\in V(G)\),
each component of~\(G[N(v)]\) has order at least~\(5\).
The proofs of Lemma~\ref{lemma:tuza-tw6-main} and Theorem~\ref{thm:tuza-tw-6}
make frequent use of following result (see~\cite[Lemma~2.7]{Puleo15}).

\begin{lemma}\label{lemma:puleo}
  Let \(G\) be an irreducible robust graph and let~\(x,y\in V(G)\).
  The following statements hold.
  \begin{enumerate}[label = {\rm (\alph*)}]
  \item \label{puleo:a}    
    If~\(d(x) \leq 6\), then~\(\Delta\big(\overline{G[N(x)]}\big)\leq 1\) and~\(\big|E\big(\overline{G[N(x)]}\big)\big|\neq 2\);
  \item \label{puleo:b}
    If $d(x)\leq 6 $ and $d(y)\leq 6$, then $xy \notin E(G)$;
  \item \label{puleo:c}  
    If $d(x) = 7$ and~$d(y)=6$, then~${N[y]\not\subseteq N[x]}$; and
  \item \label{puleo:d}  
    If~\(d(x) \leq 8\) and~\(d(y) = 5\), then~\(N[y] \not\subseteq N[x]\).
  \end{enumerate}
\end{lemma}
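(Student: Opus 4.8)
The plan is to derive each of the four statements by exhibiting, in the relevant configuration, a reducible set---contradicting the irreducibility of $G$. Throughout we exploit robustness: for any vertex $v$, every component of $G[N(v)]$ has order at least $5$, so in particular if $d(v)\le 6$ then $G[N(v)]$ is connected (any component has $\ge 5$ vertices, and two components would force $d(v)\ge 10$), and if $d(v)\le 8$ then $G[N(v)]$ has at most one component of order $\le 4$, hence is connected once $d(v)\le 8$... more carefully, $G[N(v)]$ can split into components only of sizes summing to $d(v)$ with each part $\ge 5$, so $G[N(v)]$ is connected whenever $d(v)\le 9$. This connectivity is what lets us find many edge-disjoint triangles through $v$.

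For part~\ref{puleo:a}: suppose $d(x)\le 6$. Since $G[N(x)]$ is connected on at most $6$ vertices, $\overline{G[N(x)]}$ has at most $6$ vertices. First I would rule out $\Delta(\overline{G[N(x)]})\ge 2$: if some $w\in N(x)$ is non-adjacent to two other neighbors $a,b$ of $x$, then I claim $\{x\}\cup(\text{something})$ is reducible. The idea is to take $X$ to consist of one edge from each triangle through $x$ and $Y$ a near-perfect triangle packing in the ``wheel-like'' graph $G[N[x]]$; because $G[N(x)]$ is connected but misses the edges $wa,wb$, the edges of $G[N[x]]$ can be covered by a small set $X$ with $|X|\le 2|Y|$, where $Y$ is chosen greedily among triangles $xuv$ with $uv\in E(G[N(x)])$. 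The bound $|E(\overline{G[N(x)]})|\ne 2$ is handled the same way, splitting into the cases where the two missing edges share an endpoint or not, and checking that in each case a reducing triple for $\{x\}$ exists. The arithmetic here---counting triangles through $x$ versus the size of a transversal inside $G[N[x]]$---is the routine part; the delicate point is condition~(iii) in the definition of reducible, which forces $X$ to contain every edge $uv$ of a chosen triangle whenever $u,v\notin V_0=\{x\}$; I would choose $Y$ so that each of its triangles uses the vertex $x$, making (iii) vacuous.

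For parts~\ref{puleo:b}--\ref{puleo:d}: these are all ``two low-degree vertices cannot be too close'' statements, and I would prove them by the same reducible-set mechanism with $V_0=\{x,y\}$ (or $V_0=\{y\}$ in some cases). For~\ref{puleo:b}, if $d(x),d(y)\le 6$ and $xy\in E(G)$, then using part~\ref{puleo:a} the neighborhoods $G[N(x)]$ and $G[N(y)]$ are each ``almost complete'' (complement a matching of size $\le 1$), and $G[N[x]\cup N[y]]$ is a small, dense graph; I would show $\{x,y\}$ is reducible by greedily packing triangles within $N[x]\cup N[y]$. For~\ref{puleo:c} and~\ref{puleo:d}, the hypothesis $N[y]\subseteq N[x]$ means every triangle through $y$ lives inside $G[N[x]]$, which has bounded size ($\le 8$ or $\le 9$ vertices); combined with part~\ref{puleo:a} applied to $y$ (so $G[N(y)]$ is a near-clique on $d(y)\in\{5,6\}$ vertices), one gets that the number of triangles through $y$ and the structure of $G[N[x]]$ permit a reducing triple for $\{y\}$. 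The main obstacle across all four parts is the bookkeeping: verifying $|X|\le 2|Y|$ together with conditions (ii) and (iii) simultaneously in every subcase of how the (at most one) missing edge in each neighborhood sits relative to the edge $xy$ or the containment $N[y]\subseteq N[x]$. I expect~\ref{puleo:a} to be the linchpin---once the neighborhoods are pinned down to near-cliques, \ref{puleo:b}--\ref{puleo:d} follow by finite case analysis---so I would prove \ref{puleo:a} in full first and then reuse it freely.
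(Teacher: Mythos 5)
First, note that the paper does not actually prove this lemma: it is quoted verbatim from Puleo \cite{Puleo15} (his Lemma~2.7), so there is no in-paper proof to match. Your high-level plan---contradict irreducibility by exhibiting reducing triples built from the local structure around $x$ (and $y$)---is indeed the strategy of the original source, and your use of robustness to get connectivity of $G[N(v)]$ for $d(v)\le 9$ is fine. But as written the proposal has a genuine gap, and one concretely false step.

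The false step is the claim that choosing every triangle of $Y$ to contain $x$ makes condition~(iii) vacuous when $V_0=\{x\}$. A triangle $xuv\in Y$ still has the edge $uv$ with both endpoints outside $V_0$, so (iii) forces $uv\in X$; this is exactly why, in all the explicit reducing triples in this paper (e.g.\ the proof of Lemma~\ref{lemma:tuza-tw6-main}), the ``opposite'' edges of the packed triangles are carefully placed in $X$. This is not cosmetic: it destroys the freedom you were counting on to keep $X$ small, and the counting you call routine is precisely where the difficulty sits. For instance, take the configuration excluded by part~\ref{puleo:a} in which $d(x)=6$ and $\overline{G[N(x)]}$ consists of two disjoint edges, say $v_1v_2$ and $v_3v_4$, and try $V_0=\{x\}$: by (iii), every triangle of $Y$ must lie in $G[N[x]]$ with its non-$x$ edges in $X$; taking the natural $X=E(G[N(x)])$ gives $|X|=13$, while a parity count at the four degree-$5$ vertices of $G[N[x]]$ shows any triangle packing there has at most $5$ triangles, so $2|Y|\le 10<|X|$, and replacing edges of $X$ by star edges at $x$ only makes the packing side worse. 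So the ``greedy/routine'' construction fails in a critical case, and something genuinely cleverer (which is what Puleo's several-page argument supplies, and why the present authors cite it rather than reprove it) is needed. The same issue propagates to parts~\ref{puleo:b}--\ref{puleo:d}: you describe what $V_0$ should be and that a finite case analysis will produce $(X,Y)$, but no explicit $X$, $Y$, or verified inequality $|X|\le 2|Y|$ is given, and the analogous constructions in this paper's Lemma~\ref{lemma:tuza-tw6-main} show that these gadgets must be exhibited and checked case by case. As it stands the proposal is a plausible plan pointed in the right direction, not a proof.
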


We extend the result above in the following lemma.
In the pictures throughout its proof, 
given a reducing triple \((V_0,X,Y)\) for a graph \(G\), 
and two  nonadjacent vertices \(x\) and \(y\) of~\(G\), we illustrate the triangles in \(Y\) as follows.
The triangles in~\(Y\) containing \(x\) and~\(y\) are illustrated, respectively, in dashed blue and dotted green, while the triangles in~\(Y\) not containing \(x\) or \(y\) are illustrated in solid red.
The dashdotted gray lines illustrate edges that may not exist, and thin light gray lines indicate unused edges.

\begin{lemma}\label{lemma:tuza-tw6-main}
  Let~\(G\) be an irreducible robust graph and let~$x, y \in V(G)$.
  If~\({d(x) \leq 6}\), \(d(y)\leq 6\), and~\(|N(x)\cup N(y)|\leq 7\),
  then 
  \begin{enumerate}[label = {\rm (\alph*)}] 
  \item \({d(x) = d(y) = 5}\); 
  \item \({|N(x)\cap N(y)|=3}\); and 
  \item \({G[N(x)]\simeq G[N(y)]\simeq K_5}\).
  \end{enumerate}
\end{lemma}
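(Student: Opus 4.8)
The plan is to analyze the structure forced on the two low-degree vertices $x$ and $y$ by combining the irreducibility of $G$ with the constraints from Lemma~\ref{lemma:puleo}, and then to rule out every configuration except the one described. First observe that by Lemma~\ref{lemma:puleo}\ref{puleo:b}, since $d(x)\le 6$ and $d(y)\le 6$, we have $xy\notin E(G)$, so $x\notin N(y)$ and $y\notin N(x)$; this justifies the pictorial setup described before the lemma. Since $G$ is robust, each component of $G[N(x)]$ has at least $5$ vertices, so $d(x)\ge 5$ and likewise $d(y)\ge 5$; moreover $G[N(x)]$ can have at most one component (as $d(x)\le 6<10$), and similarly for $y$. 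By Lemma~\ref{lemma:puleo}\ref{puleo:a}, $\overline{G[N(x)]}$ has maximum degree at most $1$ and its number of edges is not $2$, and the same for $y$; so $G[N(x)]$ is $K_5$, $K_6$, $K_6$ minus a perfect matching ($3$ edges), or $K_6$ minus one edge — but $K_6$ minus one edge has a component of size $6$, fine, while $K_5$ is connected; we must check connectivity in each case (e.g.\ $K_6$ minus a perfect matching is connected). The first goal, part~(a), is then to exclude $d(x)=6$ (and by symmetry $d(y)=6$).

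For part~(a), suppose $d(x)=6$. Then $|N(x)\cup N(y)|\le 7$ forces $|N(y)\setminus N(x)|\le 1$, i.e.\ $N(y)\subseteq N(x)$ or $N(y)$ misses exactly one vertex of $N(x)$. If $d(y)=5$, then $|N(x)\cup N(y)|\le 7$ gives $|N(x)\cap N(y)|\ge 4$, so at least four vertices of $N(y)$ lie in $N(x)$; since $y\notin N(x)$, we would need $N[y]\subseteq N[x]$ to fail by Lemma~\ref{lemma:puleo}\ref{puleo:d} — here $d(x)=6\le 8$ and $d(y)=5$ — so $N[y]\not\subseteq N[x]$, meaning exactly one vertex $w$ of $N(y)$ lies outside $N[x]$, hence $w\notin N(x)$ and $w\ne x$, and $N(x)\cap N(y)$ has size $4$. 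This is the genuinely hard case: I expect the main obstacle to be constructing an explicit reducing triple $(V_0,X,Y)$ with $V_0=\{x,y\}$ (or $V_0=\{x\}$) that contradicts irreducibility. The idea is to pack edge-disjoint triangles inside the near-complete graphs $G[N(x)]$, $G[N(y)]$ together with triangles through $x$ and through $y$, choosing $X$ to be the union of one edge from each triangle of $Y$ (plus, per condition~(iii), every edge of a triangle of $Y$ both of whose endpoints avoid $V_0$), and verifying $|X|\le 2|Y|$ while every triangle meeting $V_0$ is hit. One should use that $K_5$ and $K_6$ (minus few edges) contain many edge-disjoint triangles, so that the "savings" from triangles lying entirely in the dense neighborhoods pay for the transversal edges. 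A similar but easier count handles $d(x)=d(y)=6$, using Lemma~\ref{lemma:puleo}\ref{puleo:c}-type obstructions and the fact that $|N(x)\cup N(y)|\le 7$ forces a huge overlap $|N(x)\cap N(y)|\ge 5$.

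Once part~(a) is established, $d(x)=d(y)=5$, and $|N(x)\cup N(y)|\le 7$ immediately gives $|N(x)\cap N(y)|=|N(x)|+|N(y)|-|N(x)\cup N(y)|\ge 10-7=3$. For the reverse inequality $|N(x)\cap N(y)|\le 3$, suppose $|N(x)\cap N(y)|\ge 4$; then $N(y)$ has at most one vertex outside $N(x)$, and since $x\notin N(x)\cup\{$that vertex$\}$ we would be close to $N[y]\subseteq N[x]$, contradicting Lemma~\ref{lemma:puleo}\ref{puleo:d} (with $d(x)=5\le 8$, $d(y)=5$) unless exactly the structure $|N(x)\cap N(y)|=3$ with one private neighbor each holds; a short reducing-triple argument (again with $V_0=\{x,y\}$) rules out the size-$4$ intersection. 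This gives part~(b). Finally, for part~(c): from part~(a) and the first paragraph, $G[N(x)]$ is one of $K_5$, $K_6$ minus a perfect matching, or $K_6$ minus an edge — but $d(x)=5$ means $|N(x)|=5$, so $G[N(x)]$ is a connected graph on $5$ vertices with $\overline{G[N(x)]}$ of max degree $\le 1$ and $\ne 2$ edges; the options are $K_5$ ($0$ non-edges), $K_5$ minus one edge ($1$ non-edge), or $K_5$ minus two disjoint edges ($2$ non-edges, excluded), and $K_5$ minus one edge is still connected — so I must also exclude $K_5$ minus an edge, presumably via another reducing triple exploiting that a degree-$5$ vertex whose neighborhood is nearly complete lets us pack enough triangles through $x$. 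Concluding $G[N(x)]\simeq K_5$ and symmetrically $G[N(y)]\simeq K_5$ finishes the proof. The recurring technical core throughout is the same: in each excluded configuration, exhibit edge-disjoint triangles $Y$ (combining triangles inside $G[N(x)],G[N(y)]$ and triangles $x a b$, $y c d$) and a transversal $X$ with $|X|\le 2|Y|$ covering all triangles through $\{x,y\}$, contradicting irreducibility — and checking disjointness and the covering condition~(ii) carefully is where the real work lies.
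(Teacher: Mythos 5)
Your high-level strategy is the right one and matches the paper's: after getting $xy\notin E(G)$ from Lemma~\ref{lemma:puleo}\ref{puleo:b} and $d(x),d(y)\ge 5$ from robustness, every remaining bad configuration is killed by exhibiting a reducing triple $(\{x,y\},X,Y)$ that contradicts irreducibility. However, there are two problems. First, your use of Lemma~\ref{lemma:puleo}\ref{puleo:c} and \ref{puleo:d} to prune cases is invalid: since $xy\notin E(G)$, we have $y\notin N[x]$, so $N[y]\subseteq N[x]$ is false for trivial reasons, and parts (c)/(d) of that lemma say nothing about how $N(x)$ and $N(y)$ overlap. Consequently you have not excluded the configurations $d(x)=6$ with $N(y)\subseteq N(x)$ (intersection $5$ or $6$, including $N(x)=N(y)$ when $d(y)=6$), nor, in part (b), the possibilities $|N(x)\cap N(y)|\in\{4,5\}$ beyond asserting that a reducing triple will handle them; the paper treats each of these with its own explicit construction (its cases $|N(x)\cap N(y)|=6$, $=5$, $=4$ for part (a), and its part (b) argument which allows $v_6=v_1$, i.e.\ $N(x)=N(y)$).

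Second, and more fundamentally, the reducing triples are never constructed, and they are the entire substance of the proof. At every decisive point you write that one ``should'' pack triangles through $x$ and $y$ and inside the nearly complete neighborhoods and verify $|X|\le 2|Y|$, the hitting condition, and condition (iii), but no concrete $X$ and $Y$ are given for any case: not for $d(x)=6$ with intersection $4$, $5$, or $6$; not for the size-$4$ (or $5$) intersection in part (b); and not for the $K_5$-minus-an-edge neighborhoods in part (c), where the argument further splits into three sub-cases according to whether both, one, or neither endpoint of the missing edge of $G[N(y)]$ lies in $N(x)$, each requiring a different packing (and a preliminary use of Lemma~\ref{lemma:puleo}\ref{puleo:a} to pin down which edges can be absent). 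These verifications are not routine bookkeeping — e.g.\ in the intersection-$4$ case one must choose the labels so that two specified edges from $N(x)\setminus N(y)$ into $N(x)\cap N(y)$ are guaranteed to exist before a $7$-triangle packing with a $14$-edge transversal can be written down — so as it stands your text is an outline of the paper's proof rather than a proof.
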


\begin{proof}
  By Lemma~\ref{lemma:puleo}\ref{puleo:b}, we have that $xy \notin E(G)$.
  
  \medskip\noindent\textit{{\bf Proof of (a).}}
  Since \(G\) is robust, we may assume that $d(x) \geq d(y) \geq 5$ without loss of generality.
  Suppose, for a contradiction, that~$d(x) = 6$ and let~${N(x) = \{v_1,v_2,v_3,v_4,v_5,v_6\}}$.
  In what follows, we divide the proof according to the size of \(|N(x)\cap N(y)|\).
  
  \begin{description}
  \item [Case] $|N(x) \cap N(y)| = 6$. 
    
    We have \(N(y) = N(x)\).
    By Lemma~\ref{lemma:puleo}\ref{puleo:a},~$\overline{G[N(x)]} = \overline{G[N(y)]}$ is either empty, or an edge, or a matching of size~\(3\).
    Assume without loss of generality that 
    \(E\big(\overline{G[N(x)]}\big) \subseteq \{v_1v_4,v_2v_5,v_3v_6\}\).
    Let $X = E(G[N(x)])$ and~$Y = \{v_1v_3v_5,v_2v_4v_6,xv_1v_2,yv_2v_3,xv_3v_4,yv_4v_5,xv_5v_6,yv_1v_6\}$ (Figure~\ref{fig:case1}).
    Then \(|X|\leq 15 < 2|Y|\).
    Triangles in $G$ containing $x$ or \(y\) contain two vertices in $N(x)$, so contain an edge of $X$, 
    and every edge of a triangle in \(Y\) is either incident to $x$ or~$y$, or is in $X$.
    Thus~$\big(\{x,y\},X,Y\big)$ is a reducing triple for~\(G\), a contradiction.

  \item [Case] $|N(x)\cap N(y)| = 5$. 
    
    Without loss of generality, $v_1 \not\in N(y)$. 
    We may assume that \(E\big(\overline{G[N(x)]}\big) \subseteq \{v_1v_4,v_2v_5,v_3v_6\}\) by Lemma~\ref{lemma:puleo}\ref{puleo:a}.
    Moreover, if \(|N(y)| = 6\) and \(v_7\in N(y)\setminus N(x)\), then we may also assume that~\({E\big(\overline{G[N(y)]}\big)\subseteq\{v_2v_5,\,v_3v_6,\,v_4v_7,\,v_5v_7\}}\).
    Let $Z = \{xv_1,v_1v_2\}\cup E(G[\{v_2,\ldots,v_6\}])$ and~${W = \{v_2v_4v_6,\,xv_1v_2,\,yv_2v_3,\,xv_3v_4,\,yv_4v_5,\,xv_5v_6\}}$.
    For~\(|N(y)| = 5\), put \(X = Z\) and \(Y = W\), and note that \(|X|\leq 12 = 2|Y|\);
    and for \(|N(y)| = 6\), put~\(X = Z\cup \{yv_7,\,v_6v_7\}\) and \({Y = W\cup\{yv_6v_7\}}\), and note that \(|X|\leq 14 = 2|Y|\) (Figure~\ref{fig:case2}).
    Triangles in $G$ containing $x$ (resp. \(y\)) contain either~$v_1$ (resp.~\(v_7\)) or two vertices in $\{v_2,\ldots,v_6\}$, so contain an edge of $X$, and every edge of a triangle in~\(Y\) is either incident to $x$ or~$y$, or is in $X$.
    Thus~$\big(\{x,y\},X,Y\big)$ is a reducing triple for~\(G\), a contradiction.
    
  \item [Case] $|N(x)\cap N(y)| = 4$. 
    
    Without loss of generality, $v_1, v_2 \not\in N(y)$. 
    In this case, \(|N(y)| = 5\).  Let $v_7 \in N(y) \setminus N(x)$.
    By Lemma~\ref{lemma:puleo}\ref{puleo:a}, \(\overline{G[N(y)]}\) is either empty or an edge $ab$.
    If \(G[N(y)]\) is a clique, let~$ab$ be an arbitrary edge in \(G[N(y)]\).
    Let $c,d,z \in N(y) \setminus \{a,b\}$ with $c, d \in N(x)$. 
    Each one in $\{c,d,z\}$ dominates $N(y)$.
    Figures~\ref{fig:case3} and~\ref{fig:case4} depict the cases in which $v_7$ belongs or not to $\{a,b\}$, respectively.
    Let $u, w \in (N(x) \cap N(y)) \setminus \{c,d\}$.  Possibly $z \in \{u,w\}$.
    By Lemma~\ref{lemma:puleo}\ref{puleo:a}, \(\overline{G[N(x)]}\) is a matching of size at most~3, 
    so \(\overline{G[\{v_1,v_2,u,w\}]}\) is a matching of size at most~2. 
    Without loss of generality, we may assume that $v_1u, \, v_2w \in E(G)$.
    Let ${X = \{xv_1,\,xv_2,\,v_1u,\,v_2w\} \cup E(G[N(y)])}$ 
    and~${Y = \{xcd,\,yac,\,ybd,\,zad,\,zbc,\,xv_1u,\,xv_2w\}}$.
    Then~$|X| \leq 14 = 2|Y|$.
    Triangles in $G$ containing $x$ contain either $v_1$ or $v_2$, or two vertices in $N(y)$, and triangles containing $y$ contain two vertices in $N(y)$, 
    so all such triangles contain an edge of $X$.
    Also, every edge of a triangle in \(Y\) is either incident to $x$ or~$y$, or is in $X$.
    Thus~$\big(\{x,y\},X,Y\big)$ is a reducing triple for~\(G\), a contradiction.

  \begin{figure}[ht]
  \centering
  \begin{subfigure}[b]{.4\linewidth}
    \centering\scalebox{.7}{\begin{tikzpicture}[scale = 1]
{\Large
	\node (x) [black vertex] at (-4,2) {};
	\node (y) [black vertex] at (4,2) {};
	\node (label_x) [] at (-4,2.3) {$x$};
	\node (label_y) [] at (4,2.3) {$y$};

	\foreach \i in {1,...,6}{
		\node(v\i) [black vertex] at (\i*60+60:2) {};	
	}
	\foreach \i in {1,...,6}{
		\node() [] at (\i*60+60:2.3) {$v_\i$};	
	}	
	
	\draw[edge,color=red] (v2) -- (v4) -- (v6) -- (v2) (v1) -- (v3) -- (v5) -- (v1);
	\draw[edge,color=blue,densely dashed] (x) -- (v1) -- (v2) -- (x) (x) to [bend right=30] (v3) (v3) -- (v4) -- (x) (x) -- (v5) -- (v6) to [bend right=30] (x);
	\draw[edge,color=green!50!olive,line width=0.5mm,dotted] (y) -- (v2) -- (v3) -- (y) (y) -- (v5) -- (v4) to [bend right=30] (y) (y) to [bend right=30] (v1) (v1) -- (v6) -- (y);
	
	\draw[edge,color=gray,loosely dashdotted] (v1) -- (v4) (v2) -- (v5) (v3) -- (v6);
}
\end{tikzpicture}}
    \caption{}\label{fig:case1}
  \end{subfigure}
  \hspace{1cm}
  \begin{subfigure}[b]{.4\linewidth}	
    \centering\scalebox{.7}{\begin{tikzpicture}[scale = 1]
{\Large
	\node (x) [black vertex] at (-3,3) {};
	\node (y) [black vertex] at (3,3) {};
	\node (label_x) [] at (-3,3.3) {$x$};
	\node (label_y) [] at (3,3.3) {$y$};
	
	\node (v1) [black vertex] at (-4,0) {};
	\node (v7) [black vertex] at (4,0) {};
	\node () [] at (-4.4,0) {$v_1$};
	\node () [] at (4.4,0) {$v_7$};
	
	\foreach \i in {2,...,6}{
		\node(v\i) [black vertex] at (\i*360/5-18:2) {};	
	}
	\foreach \i in {3,...,5}{
		\node() [] at (\i*360/5-18:2.4) {$v_\i$};	
	}	
        \node() [] at (2*360/5-23:2.3) {$v_2$};	
        \node() [] at (6*360/5-13:2.3) {$v_6$};	
	
	\draw[edge,color=red] (v2) -- (v4) -- (v6) -- (v2);
	\draw[edge,color=blue,densely dashed] (x) -- (v1) -- (v2) -- (x) (x) -- (v3) -- (v4) -- (x) (x) to [bend right=15] (v5) (v5) -- (v6) -- (x);
	\draw[edge,color=green!50!olive,line width=0.5mm,dotted] (y) -- (v2) -- (v3) to [bend right=15] (y) (y) -- (v4) -- (v5) -- (y) (y) -- (v6) -- (v7) -- (y);
	
	\draw[edge,color=gray,loosely dashdotted] (v1) to [bend right=15] (v4) (v2) -- (v5) -- (v7) (v3) -- (v6) (v4) to [bend right=15] (v7);
}
\end{tikzpicture}}
    \caption{}\label{fig:case2}
  \end{subfigure}
  \\ 
  \begin{subfigure}[b]{.4\linewidth}	
    \centering\scalebox{.7}{\begin{tikzpicture}[scale = 1]
{\Large
	\node (x) [black vertex] at (-4,3) {};
	\node (y) [black vertex] at (3,3) {};
	\node (label_x) [] at (-4,3.3) {$x$};
	\node (label_y) [] at (3,3.3) {$y$};
	
	\foreach \i in {3,...,7}{
		\node(v\i) [black vertex] at (\i*360/5-144:2) {};	
	}
        \node() [] at (3*360/5-144:2.4) {$c$};
        \node() [] at ($(v4)+(70:.6)$)  {$u{=}z$};	
        \node() [] at (5*360/5-144:2.6) {$w{=}b$};	
        \node() [] at (6*360/5-144:2.4) {$d$};	
        \node() [] at (7*360/5-144:2.8) {$a{=}v_7$};	

	\node (v1) [black vertex] at ($(v4)+(-4,0)$) {};
	\node (v2) [black vertex] at ($(v5)+(-4,0)$) {};
	\node () [] at ($(v1)+(180:.4)$) {$v_1$};
	\node () [] at ($(v2)+(180:.4)$) {$v_2$};
	
	
	\draw[edge,color=red] (v4) -- (v3) -- (v5) -- (v4)  (v4) -- (v6) -- (v7) -- (v4);
	\draw[edge,color=blue,densely dashed] (x) -- (v3) -- (v6) -- (x) (x) -- (v1) -- (v4) -- (x) (x) -- (v2) (v2) -- (v5) -- (x);
	\draw[edge,color=green!50!olive,line width=0.5mm,dotted] (y) -- (v3) -- (v7) -- (y) (y) -- (v5) -- (v6) -- (y);
	
	\draw[edge,color=gray,loosely dashdotted] (v4) -- (v2) -- (v1) -- (v5) -- (v7);
}
\end{tikzpicture}}
    \caption{}\label{fig:case3}
  \end{subfigure}
  \hspace{1cm}
  \begin{subfigure}[b]{.4\linewidth}	
    \centering\scalebox{.7}{\begin{tikzpicture}[scale = 1]
{\Large
	\node (x) [black vertex] at (-4,3) {};
	\node (y) [black vertex] at (3,3) {};
	\node (label_x) [] at (-4,3.3) {$x$};
	\node (label_y) [] at (3,3.3) {$y$};
	
	\foreach \i in {3,...,7}{
		\node(v\i) [black vertex] at (\i*360/5-144:2) {};	
	}
        \node() [] at (3*360/5-144:2.4) {$d$};
        \node() [] at ($(v4)+(90:.6)$)  {$w{=}a$};	
        \node() [] at (5*360/5-144:2.6) {$u{=}b$};	
        \node() [] at (6*360/5-144:2.4) {$c$};	
        \node() [] at (7*360/5-144:2.7) {$z{=}v_7$};	

	\node (v1) [black vertex] at ($(v4)+(-4,0)$) {};
	\node (v2) [black vertex] at ($(v5)+(-4,0)$) {};
	\node () [] at ($(v1)+(180:.4)$) {$v_1$};
	\node () [] at ($(v2)+(180:.4)$) {$v_2$};
	
	
	\draw[edge,color=red] (v7) -- (v3) -- (v4) -- (v7)  (v7) -- (v5) -- (v6) -- (v7);
	\draw[edge,color=blue,densely dashed] (x) -- (v3) -- (v6) -- (x) (x) -- (v1) -- (v5) -- (x) (x) -- (v2) -- (v4) -- (x);
        \draw[edge,color=green!50!olive,line width=0.5mm,dotted] (y) -- (v3) -- (v5) -- (y) (y) to [bend right=20] (v4) (v4) -- (v6) -- (y);
	
	\draw[edge,color=gray,loosely dashdotted] (v1) -- (v4) -- (v5) -- (v2) -- (v1);
}
\end{tikzpicture}}
    \caption{}\label{fig:case4}
  \end{subfigure}
  \caption{Illustration of the reducing triples of the proof of Lemma~\ref{lemma:tuza-tw6-main}(a).}
  \end{figure}
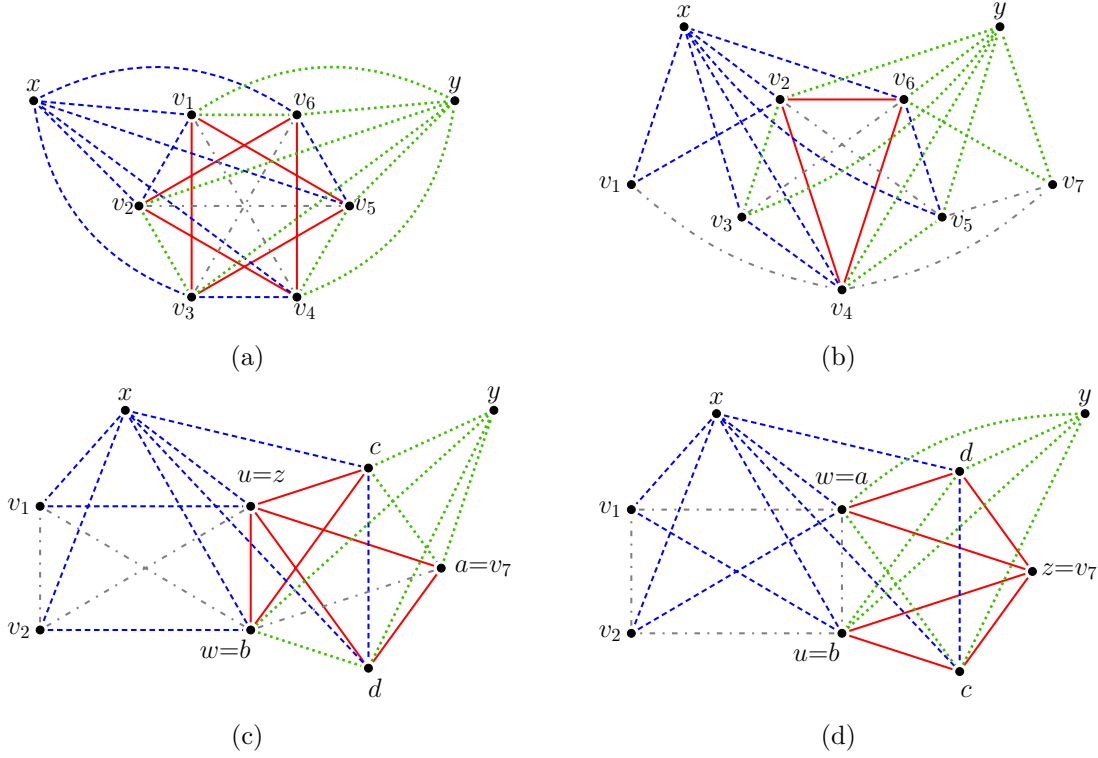
  \end{description}

\smallskip\noindent\textit{{\bf Proof of (b).}}
Clearly $|N(x) \cap N(y)| \geq 3$.  So it is enough to show that $|N(x) \cap N(y)| \leq 3$.
Let~$N(x) = \{v_1,v_2,v_3,v_4,v_5\}$ and suppose, for a contradiction, that ${N(y) = \{v_2,v_3,v_4,v_5,v_6\}}$, where possibly $v_6 = v_1$. 
By Lemma~\ref{lemma:puleo}\ref{puleo:a}, ${|E(G[\{v_2,\ldots,v_5\}])| \geq 5}$, thus we may assume, without loss of generality, that~$v_2v_3, v_3v_4, v_4v_5, v_5v_2 \in E(G)$.  
Consider
\({X = \{xv_1, \, yv_6\} \cup E(G[\{v_2,\ldots,v_5\}])}\) and 
\({Y = \{xv_2v_5, \, xv_3v_4, \, yv_2v_3, \, yv_4v_5\}}\).
Then \(|X| \leq 8 = 2|Y|\).
Triangles in \(G\) containing~\(x\) (resp.~\(y\)) either contain \(v_1\) (resp.~\(v_6\)) or contain two vertices in \(\{v_2,\ldots, v_5\}\), hence contain an edge of~\(X\).
Moreover, every edge of a triangle in \(Y\) is either incident to \(x\) or~\(y\), or is in~\(X\).    
So~$\big(\{x,y\},X,Y\big)$ is a reducing triple for~\(G\) (Figure~\ref{fig:caseb}), a contradiction.

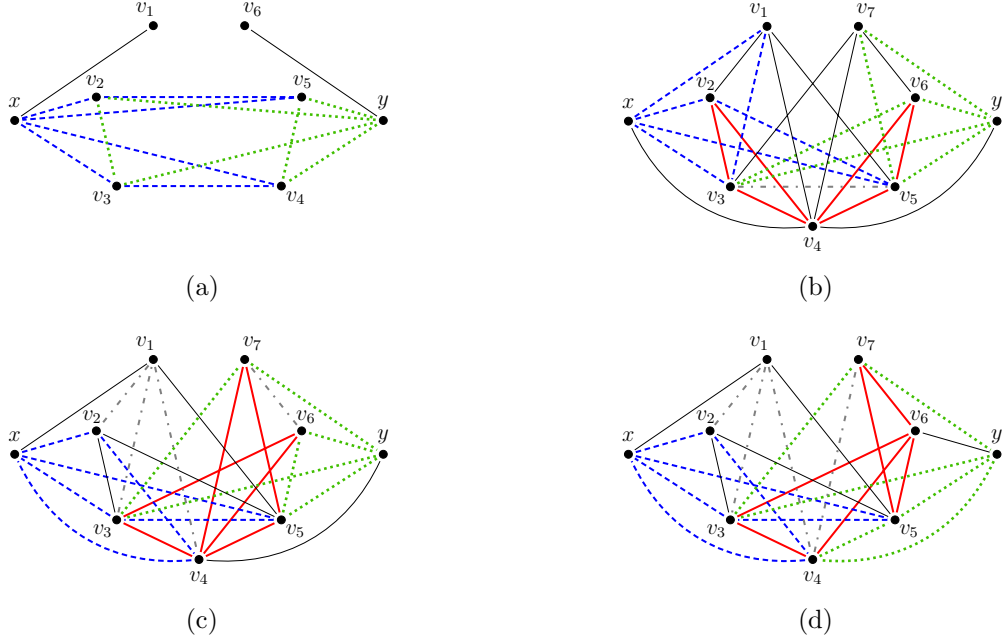
\begin{figure}[ht]
\centering
\begin{subfigure}[b]{.4\linewidth}	
  \centering\scalebox{.7}{\begin{tikzpicture}[scale = 1]

  \node (x) [black vertex] at (-3.5,0) {};
  \node (y) [black vertex] at (3.5,0) {};
  \node (label_x) [] at (-3.5,.34) {{\large $x$}};
  \node (label_y) [] at (3.5,.34) {{\large $y$}};
  
  \foreach \i in {1,...,3}{
    \node(v\i) [black vertex] at (\i*360/7+90-360/14:2) {};	
  }
  \foreach \i in {4,...,6}{
    \node(v\i) [black vertex] at (\i*360/7+90+360/14:2) {};	
  }
  \foreach \i in {1,3}{
    \node() [] at (\i*360/7+90-360/14:2.35) {{\large $v_\i$}};	
  }	
  \node() [] at (2*360/7+90-1.3*360/14:2.1) {{\large $v_2$}};	
  \node() [white] at (4*360/7+90-360/14:2.35) {{\large $v_0$}};	
  \foreach \i in {4,6}{
    \node() [] at (\i*360/7+90+360/14:2.35) {{\large $v_\i$}};	
  }	
  \node() [] at (5*360/7+90+1.3*360/14:2.1) {{\large $v_5$}};	
  
  \draw[edge,color=blue, densely dashed] (x) -- (v2) -- (v5) -- (x) (x) -- (v3) -- (v4) -- (x);
  \draw[edge,color=green!50!olive,line width=0.5mm,dotted] (y) -- (v2) -- (v3) -- (y) (y) -- (v4) -- (v5) -- (y);
  
  \draw[edge,line width=0.05mm,color=black] (v6) -- (y) (x) --(v1);
\end{tikzpicture}}
  \caption{}\label{fig:caseb}
\end{subfigure}
\hspace{13mm}
\begin{subfigure}[b]{.4\linewidth}	
  \centering\scalebox{.7}{\begin{tikzpicture}[scale = 1]

  \node (x) [black vertex] at (-3.5,0) {};
  \node (y) [black vertex] at (3.5,0) {};
  \node (label_x) [] at (-3.5,.34) {{\large $x$}};
  \node (label_y) [] at (3.5,.34) {{\large $y$}};
  
  \foreach \i in {1,...,7}{
    \node(v\i) [black vertex] at (\i*360/7+90-360/14:2) {};	
  }
  \foreach \i in {1,3,4,5,7}{
    \node() [] at (\i*360/7+90-360/14:2.34) {{\large $v_\i$}};	
  }	
  \node() [] at (2*360/7+90-1.25*360/14:2.15) {{\large $v_2$}};	
  \node() [] at (6*360/7+90-0.75*360/14:2.15) {{\large $v_6$}};	
  
  \draw[edge,color=red] (v2) -- (v3) -- (v4) -- (v2) (v4) -- (v5) -- (v6) -- (v4);
  \draw[edge,color=blue, densely dashed] (x) -- (v2) -- (v5) -- (x) (x) -- (v1) -- (v3) -- (x);
  \draw[edge,line width=0.5mm,color=green!50!olive, dotted] (y) -- (v3) -- (v6) -- (y)  (y) -- (v5) -- (v7) -- (y);
  
  \draw[edge,line width=0.05mm,color=black] (v1) -- (v2) (v1) -- (v4) (v1) -- (v5) (v3) -- (v7) (v4) -- (v7) (v6) -- (v7) (x) to [bend right=35] (v4) (y) to [bend left=35] (v4);
  \draw[edge,color=gray, loosely dashdotted] (v3) -- (v5);
\end{tikzpicture}}
  \caption{}\label{fig:casec1}
\end{subfigure}
\\ \vspace{3mm}
\begin{subfigure}[b]{.4\linewidth}	
  \centering\scalebox{.7}{\begin{tikzpicture}[scale = 1]

  \node (x) [black vertex] at (-3.5,0) {};
  \node (y) [black vertex] at (3.5,0) {};
  \node (label_x) [] at (-3.5,.34) {{\large $x$}};
  \node (label_y) [] at (3.5,.34) {{\large $y$}};
  
  \foreach \i in {1,...,7}{
    \node(v\i) [black vertex] at (\i*360/7+90-360/14:2) {};	
  }
  \foreach \i in {1,3,4,5,7}{
    \node() [] at (\i*360/7+90-360/14:2.34) {{\large $v_\i$}};	
  }	
  \node() [] at (2*360/7+90-1.25*360/14:2.15) {{\large $v_2$}};	
  \node() [] at (6*360/7+90-0.75*360/14:2.15) {{\large $v_6$}};	
  
  \draw[edge,color=red] (v3) -- (v4) -- (v6) -- (v3) (v4) -- (v5) -- (v7) -- (v4);
  \draw[edge,color=blue, densely dashed] (x) -- (v2) -- (v4) to [bend left=35] (x) (x) -- (v5) -- (v3) -- (x);
  \draw[edge,color=green!50!olive,line width=0.5mm,dotted] (y) -- (v3) -- (v7) -- (y)  (y) -- (v5) -- (v6) -- (y);
  
  \draw[edge,line width=0.05mm,color=black] (x) -- (v1) -- (v5) -- (v2) -- (v3) (y) to [bend left=35] (v4);
  \draw[edge,color=gray, loosely dashdotted] (v1) -- (v2) (v3) -- (v1) -- (v4) (v6) -- (v7);
\end{tikzpicture}}
  \caption{}\label{fig:casec2}
\end{subfigure}
\hspace{13mm}
\begin{subfigure}[b]{.4\linewidth}	
  \centering\scalebox{.7}{\begin{tikzpicture}[scale = 1]

  \node (x) [black vertex] at (-3.5,0) {};
  \node (y) [black vertex] at (3.5,0) {};
  \node (label_x) [] at (-3.5,.34) {{\large $x$}};
  \node (label_y) [] at (3.5,.34) {{\large $y$}};
  
  \foreach \i in {1,...,7}{
    \node(v\i) [black vertex] at (\i*360/7+90-360/14:2) {};	
  }
  \foreach \i in {1,3,4,5,7}{
    \node() [] at (\i*360/7+90-360/14:2.34) {{\large $v_\i$}};	
  }	
  \node() [] at (2*360/7+90-1.25*360/14:2.15) {{\large $v_2$}};	
  \node() [] at (6*360/7+90-0.75*360/14:2.15) {{\large $v_6$}};	
  
  \draw[edge,color=red] (v5) -- (v6) -- (v7) -- (v5) (v3) -- (v4) -- (v6) -- (v3);
  \draw[edge,color=blue,densely dashed] (x) -- (v2) -- (v4) to [bend left=35] (x) (x) -- (v5) -- (v3) -- (x);
  \draw[edge,color=green!50!olive,line width=0.5mm,dotted] (y) to [bend left=35] (v4) (v4) -- (v5) -- (y) (y) -- (v3) -- (v7) -- (y);
  
  \draw[edge,line width=0.05mm,color=black] (v5) -- (v2) -- (v3) (v5) -- (v1) -- (x) (v6) -- (y);
  \draw[edge,color=gray,loosely dashdotted] (v1) -- (v3) (v2) -- (v1) -- (v4) -- (v7);
\end{tikzpicture}}
  \caption{}\label{fig:casec3}
\end{subfigure}
\caption{Reducing triples of the proof of Lemma~\ref{lemma:tuza-tw6-main}(b) and~(c).}
\end{figure}

\smallskip\noindent\textit{{\bf Proof of (c).}}
Let $N(x) = \{v_1,v_2,v_3,v_4,v_5\}$ and $N(y) = \{v_3,v_4,v_5,v_6,v_7\}$.
Suppose, for a contradiction, that $G[N(y)]$ is not complete.  
By Lemma~\ref{lemma:puleo}\ref{puleo:a}, 
only two vertices in $N(y)$ are nonadjacent.
These two vertices might both be in $N(x)$, 
or only one is in $N(x)$, or none is in $N(x)$. 
So, we may assume, without loss of generality, 
that either $v_3v_5$, or $v_4v_7$, or $v_6v_7$ is not in $G$.

If $v_3v_5$ is not in $G$, then we have ${E(\overline{G[N(x)]}) = E(\overline{G[N(y)]}) = \{v_3v_5\}}$ by Lemma~\ref{lemma:puleo}\ref{puleo:a}.
In this case, consider $X = E(v_2v_3v_4)\cup E(v_4v_5v_6)\cup \{v_2v_5, \, v_1v_3, \, v_3v_6, \, v_5v_7, \, xv_1, \, yv_7\}$ and ${Y = \{v_2v_3v_4, \, v_4v_5v_6, \, xv_1v_3, \, xv_2v_5, \, yv_3v_6, \, yv_5v_7\}}$.
Every triangle in \(G\) containing \(x\) (resp.~\(y\)) either contains~\(v_1\) (resp.~\(v_7\)) or contains two vertices in \(\{v_2,\ldots, v_5\}\) (resp.~\(\{v_3,\ldots,v_6\}\)), so contains an edge of~\(X\) (Figure~\ref{fig:casec1}).

If either $v_4v_7$ or $v_6v_7$ is not in $G$, 
then we may assume, without loss of generality, 
that~$E(\overline{G[N(x)]}) \subseteq \{v_1v_2,v_1v_3,v_1v_4\}$ by Lemma~\ref{lemma:puleo}\ref{puleo:a}.  
Let $w, z \in \{v_4,v_6\}$, with ${wv_7 \in E(G)}$ and $zv_7 \not\in E(G)$, and, in this case, consider 
$X = E(G[N(y)]) \cup \{xv_1, xv_2, v_2v_4\}$ and
${Y = \{xv_3v_5, \, xv_2v_4, \, yv_3v_7, \, v_3v_4v_6, \, yv_5z, \, v_5v_7w\}}$.
Triangles in \(G\) containing \(x\) or \(y\) either contain~\(v_1\) or~\(v_2\), or contain two vertices in \(N(y)\), so contain an edge of~\(X\) (Figures~\ref{fig:casec2} and \ref{fig:casec3}).

In both cases, \(|X| = 12 = 2|Y|\) and every edge of a triangle in \(Y\) is either adjacent to~\(x\) or~\(y\), or is in~\(X\).
So~$\big(\{x,y\},X,Y\big)$ is a reducing triple for~\(G\), a contradiction.
\end{proof}

Now we are able to prove the main result of this section.

\begin{theorem}\label{thm:tuza-tw-6}
If \(G\) is a graph with treewidth at most~\(6\), then~\(\tau(G)\leq 2\,\nu(G)\).
\end{theorem}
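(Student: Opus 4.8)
The plan is to argue by contradiction from a vertex‑minimal counterexample $G$, so $\tw(G)\le 6$, $\tau(G)>2\nu(G)$, and (deleting them changes neither $\tau$ nor $\nu$) $G$ has no isolated vertex. First I would reduce to the hypotheses of Lemma~\ref{lemma:puleo}: since deleting vertices and edges does not increase treewidth, Lemma~\ref{lemma:tuza-partial6-reducible-puleo} and minimality give that $G$ is irreducible; and $G$ is robust, since a non‑robust graph is reducible by a short argument as in~\cite{Puleo15}. Robustness together with the absence of isolated vertices forces minimum degree at least $5$, and since a graph of treewidth $k$ has a vertex of degree at most $k$, we get $\tw(G)\in\{5,6\}$.

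Now take a rooted tree decomposition $(\calV,T,r)$ of $G$ (Proposition~\ref{prop:root-tree-dec}); all bags have size $\tw(G)+1\le 7$. The leaves of $T$ feed us low‑degree vertices: if $t$ is a leaf with representative $y$, then $N_G(y)\subseteq V_t\cap V_{p(t)}$ by Remark~\ref{remark:leaf-degree}, so $5\le d(y)\le 6$; and if two leaves share a parent $t$, then both of their representatives have all their neighbors inside $V_t$, whence $|N(x)\cup N(y)|\le|V_t|\le 7$. I would pick a leaf $t_1$ of maximum depth and set $t:=p(t_1)$, so that every child of $t$ is a leaf, and then split on the number of children of $t$.

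If $t$ has at least two children, apply Lemma~\ref{lemma:tuza-tw6-main} to the representatives $x,y$ of two of them: this yields $d(x)=d(y)=5$, $|N(x)\cap N(y)|=3$, and $G[N(x)]\simeq G[N(y)]\simeq K_5$. Running this over all pairs of children‑representatives $z_i$, the complements $A_i:=V_t\setminus N(z_i)$ are pairwise disjoint $2$‑subsets of the $7$‑set $V_t$ (the case $\tw(G)=5$ is even more rigid and already excludes two children here), so $t$ has at most three children and no two children‑representatives have the same neighborhood — this last point needing a direct reducing argument: if $N(z)=N(z')=:S$, then $\{z,z'\}$ is reducible using $X=\binom{S}{2}$ and a packing of $\binom{S}{2}$ into six triangles, two inside $S$ and four of the shapes $z\alpha\beta$ and $z'\alpha\beta$. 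If instead $t$ has exactly one child, I would pair $x$ with the representative $z$ of $t$: since $(\calV,T,r)$ is a rooted tree decomposition, $V_t\cap V_{p(t)}\ne V_t\cap V_{t_1}$, and according to whether $z\in V_{t_1}$, either $N(x)$ and $N(z)$ lie in a common $7$‑set with $d(z)\le 6$, so Lemma~\ref{lemma:tuza-tw6-main} puts us back in the rigid case above, or $d(z)=7$ and $N[x]\subseteq N[z]$, contradicting Lemma~\ref{lemma:puleo}\ref{puleo:c} or~\ref{puleo:d}; the residual case $t=r$ leaves $G$ with at most $10$ vertices and is finished by inspection of the already‑constrained configuration.

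So everything comes down to ruling out the rigid configuration: two nonadjacent vertices $x,y$ with $d(x)=d(y)=5$, $G[N(x)]\simeq G[N(y)]\simeq K_5$ and $|N(x)\cap N(y)|=3$, i.e.\ two copies of $K_6$ sharing a triangle. Here I would exploit that each $w\in N(x)$ satisfies $N[x]\subseteq N[w]$, so $d(w)\ge 9$ by Lemma~\ref{lemma:puleo}\ref{puleo:d}, and likewise for $N(y)$; then, since $N(x)\cup N(y)$ fills the bag $V_t$ while the representative of $t$ has all its neighbors among $V_t$ and the at most three (degree‑$5$) children‑representatives, a degree count forces $t$ to have more than three children or two children with equal neighborhoods — both contradictions by the previous paragraph. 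I expect this final step, extracting a contradiction from the two‑$K_6$'s‑meeting‑in‑a‑triangle configuration by balancing the forced degree‑$\ge 9$ vertices against the size‑$7$ bags of the decomposition, to be the technical heart of the argument and its main difficulty.
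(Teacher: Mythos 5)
Your overall strategy mirrors the paper's (minimal counterexample, irreducibility and robustness, rooted tree decomposition, analysis of a node $t$ all of whose children are leaves, Lemmas~\ref{lemma:puleo} and~\ref{lemma:tuza-tw6-main}), and several of your cases do close: with two leaf-children the representative $w$ of $t$ has $d(w)\leq 8$ while lying in $N(z_1)\cup N(z_2)$ where every vertex is forced to degree at least~$9$ via Lemma~\ref{lemma:puleo}\ref{puleo:d} (this is exactly the paper's $\ell=2$ argument), and in the one-child case you do not even need to defer to a ``rigid case'': if $d(w)\leq 6$ then $wz\notin E(G)$ and $N(w)\cup N(z)\subseteq V_t\setminus\{w\}$ has size at most $6$, directly contradicting the size-$7$ union that Lemma~\ref{lemma:tuza-tw6-main} forces.

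The genuine gap is in the step you yourself single out as the heart of the argument: ruling out a node $t\neq r$ with exactly three leaf-children. There your degree count does not produce a contradiction. Each $z_i$ has degree $5$ with the non-neighborhoods $A_i=V_t\setminus N(z_i)$ pairwise disjoint $2$-sets, every vertex of $V_t$ is forced to degree at least $9$, and the representative $w$ of $t$ satisfies $N(w)\subseteq (V_t\setminus\{w\})\cup\{z_1,\ldots,z_\ell\}$, hence $d(w)\leq 6+\ell$. This forces $\ell\geq 3$, not $\ell>3$: with $\ell=3$ the count balances exactly ($d(w)=9$, $w$ being the unique vertex of $V_t$ outside $A_1\cup A_2\cup A_3$), the neighborhoods of the $z_i$ are automatically distinct, and the remaining six vertices of $V_t$ lie in $V_{p(t)}$ and may have arbitrarily large degree through the rest of the graph, so neither disjunct of your claimed conclusion is triggered. (Only when $t=r$, so that all of $V_t$ is confined to $V_t\cup\{z_1,z_2,z_3\}$, does the count give a contradiction.) The surviving configuration is highly structured --- $G[V_t]\simeq K_7$ with three degree-$5$ vertices attached --- but eliminating it requires an additional idea: the paper exhibits an explicit reducing triple $\big(\{z_1,z_2,z_3\},X,Y\big)$ with $X=E(G[V_t])$, $|X|=21$, and $Y$ a family of $11$ edge-disjoint triangles (five inside $K_7$ decomposing $15$ of its edges, plus six triangles through the $z_i$ covering the remaining six edges), so that $|X|\leq 2|Y|$ contradicts irreducibility. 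No counting of degrees against the size-$7$ bags can replace this construction, so as written your proposal does not complete the proof.
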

\begin{proof}
Suppose, for a contradiction, that the statement does not hold,
and let~\(G\) be a graph with treewidth at most \(6\) and such that \(\tau(G) > 2\,\nu(G)\),
and that minimizes \(|V(G)|\) subject to these conditions.
We claim that~\(G\) is irreducible.
Indeed, suppose that \(G\) has a reducing triple \((V_0,X,Y)\),
and let~\({G'=(G-X)-V_0}\).
Note that~$G'$ has treewidth at most~6,
and, by the minimality of \(G\), we have~\({\tau(G')\leq 2\,\nu(G')}\).
By Lemma~\ref{lemma:tuza-partial6-reducible-puleo},~${\tau(G)\leq 2\,\nu(G)}$,
a contradiction.  

\begin{claimsn}\label{remark:tuz-tw6-remark-1}
  \(G\) is robust.
\end{claimsn}
\begin{proof}
  Suppose, for a contradiction, that~\(x\) is a vertex of~\(G\) such that~\(G[N(x)]\) contains a component~\(C\) with at most four vertices.
  Let~\({E_C = \{xv \colon v\in V(C)\}}\) and~\(M'\) be a maximum matching in~\(C\).

  If~\(M'=\emptyset\), then there is no triangle in~\(G\) containing the edges in~\(E_C\).
  Thus, for \(G' = G-E_C\), we have that $\tau(G') = \tau(G)$ and $\nu(G') = \nu(G)$.
  The minimality of~\(G\) implies that~\({\tau(G') \leq 2\,\nu(G')}\),
  and so~\({\tau(G)\leq 2\,\nu(G)}\), a contradiction.

  If~\({M'=\{v_1v_2\}}\), then~\(C\) is either a star or a triangle.
  If~\(C\) is a star, let~\(u\) be its center,
  and if~\(C\) is a triangle, then let~\(u\) be the vertex of~\(C\) different from~\(v_1\) and \(v_2\).
  Note that if~\(A\) is a triangle of~\(G\) containing edges of~\(E_C\),
  then~\(A\) contains~\(v_1v_2\) or~\(A\) contains~\(xu\).
  Let~\({G' = G-E(xv_1v_2)}\).
  By the minimality of~\(G\), we have~\({\tau(G') \leq 2\,\nu(G')}\).
  Thus let~\(X'\) and~\(Y'\) be a minimum triangle transversal
  and a maximum triangle packing of~\(G'\), respectively.
  Note that~\({X'\cup\{v_1v_2,xu\}}\) is a triangle transversal of~\(G\),
  and~\({Y'\cup\{xv_1v_2\}}\) is a triangle packing of~\(G\).
  Hence $$\tau(G) \leq |X'|+2 = \tau(G') + 2
  \leq 2\,\nu(G') + 2 = 2|Y'|+2 \leq 2\,\nu(G),$$
  a contradiction.

  Finally, if~\({M' = \{u_1u_2,v_1v_2\}}\),
  then we put~\({G' = G-E(xu_1u_2)-E(xv_1v_2)}\). 
  Let~\(X'\) and~\(Y'\) be a minimum triangle transversal 
  and a maximum triangle packing of~\(G'\), respectively.
  Note that~\({X'\cup \{u_1u_2,v_1v_2,xv_1,xv_2\}}\) is a triangle transversal of~\(G\),
  and~\({Y'\cup\{xu_1u_2,xv_1v_2\}}\) is a triangle packing of~\(G\).
  Hence $$\tau(G) \leq |X'|+4 = \tau(G') + 4
  \leq 2\,\nu(G') + 4 = 2|Y'|+4 \leq 2\,\nu(G),$$
  a contradiction.
\end{proof}

Suppose that~$|V(G)| \leq 7$.
As~$G$ is irreducible and robust, and \({d(v) \leq 6}\) for every~\({v\in V(G)}\),
Lemma~\ref{lemma:puleo}\ref{puleo:b} implies that $E(G) = \emptyset$.
Thus ${\tau(G)\leq 2\,\nu(G)}$, a contradiction.
So~\({|V(G)|\geq 8}\). 
By Proposition~\ref{prop:root-tree-dec},~\(G\) has a rooted tree decomposition~$(T, \mathcal{V},r)$.
Because~\(|V(G)|\geq 8\), we have that~${|V(T)|>1}$, and hence there is a node~$t \in V(T)$ with~$h(t)=1$.
If $t \neq r$, let~\(w\) be the representative of~\(t\);
otherwise let~\(w\) be an arbitrary vertex of~\(V_t\).

First suppose that~$S(t)=\{t'\}$, and let $z$ be the representative of \(t'\).
Recall that \(z \notin V_t\) and that $d(z) \leq 6$ by Remark \ref{remark:leaf-degree}.
Because \(G\) is robust, $d(z) \in \{5,6\}$.
Also, $(T-t', \mathcal{V}\setminus \{V_{t'}\},r)$ is a rooted tree decomposition for $G-z$
with $t$ as a leaf, because~$S(t)=\{t'\}$.  So we can apply Remark~\ref{remark:leaf-degree} to $G-z$ and~$t$, 
and conclude that~$N_{G-z}(w) \subseteq V_t \setminus \{w\}$.
Thus~$d(w)=|N(w)| \leq |N_{G-z}(w)| + 1 \leq |V_t| \leq 7$.
If~$d(w) = 7$,
then \(w\) must be adjacent to $z$ and to every vertex in \(V_t\setminus\{w\}\), 
so we would have that \(N[z]\subseteq N[w]\) because \(V_{t'} \subseteq V_t\cup\{z\}\).
This contradicts either Lemma~\ref{lemma:puleo}\ref{puleo:c} or Lemma~\ref{lemma:puleo}\ref{puleo:d}.
So we may assume that~$d(w) \leq 6$, 
and hence $wz \notin E(G)$ by Lemma~\ref{lemma:puleo}\ref{puleo:b}.
Thus \(N(z) \cup N(w) \subseteq V_t \setminus \{w\}\), 
and hence \(|N(z) \cup N(w)| \leq |V_t|-1 \leq 6\).
On the other hand, by Lemma~\ref{lemma:tuza-tw6-main}, 
we have that \(d(z)=d(w)=5\) and \(|N(z)\cap N(w)| = 3\), 
which imply that \(|N(z)\cup N(w)| = 7\), a contradiction.

Therefore~$|S(t)|=\ell>1$.
Let~\(S(t) = \{t_1,\ldots,t_\ell\}\) and let~\(z_i\) be the representative of~\(t_i\) for \(i=1,\ldots,\ell\). Note that $N(z_1) \cup \cdots \cup N(z_\ell) \subseteq V_t$, and $|V_t| \leq 7$. 
Thus, by Lemma~\ref{lemma:tuza-tw6-main}, \(d(z_i) = 5\) and~\(G[N(z_i)] \simeq K_5\) for every \(i \in \{1,\ldots,\ell\}\), and $|N(z_i) \cap N(z_j)| = 3$ for~\(i,j \in \{1,\ldots,\ell\}\) with \(i \neq j\).  
This implies that $|N(z_i) \cup N(z_j)| = 7$ and so $N(z_i) \cup N(z_j) = V_t$, for~\(i,j \in \{1,\ldots,\ell\}\) with \(i \neq j\). 

Suppose that $\ell = 2$.  
Note that~\(t\) is a leaf of~\((T',\calV',r)\),
where~\({T' = T-t_1-t_2}\) and~\({\calV' = \calV\setminus\{V_{t_1},V_{t_2}\}}\),
hence~\({d_{G-x-y}(w) \leq 6}\).
So \({d(w)\leq 8}\).
We may assume, without loss of generality, that~\({w \in N(z_1)}\).
Hence, because~\(G[N(z_1)]\) is a complete graph, we have that~\(N[z_1] \subseteq N[w]\), a contradiction to Lemma~\ref{lemma:puleo}\ref{puleo:d}.
We conclude that~$\ell \geq 3$.

Let~$V_t = \{v_1,\ldots,v_7\}$, with \(N(z_1) = \{v_1,\ldots,v_5\}\) and \(N(z_2) = \{v_3,\ldots,v_7\}\).
Because $N(z_3) \subseteq V_t$, \({|N(z_1) \cap N(z_3)| = |N(z_2) \cap N(z_3)| = 3}\), and~\(d(z_3) = 5\), exactly one vertex from~\(N(z_1)\cap N(z_2)\) is in~\(N(z_3)\).
So we may assume, without loss of generality, that~\(N(z_3) = \{v_1,v_2,v_3,v_6,v_7\}\).
Note that every pair of vertices in~\(V_t\) is contained in at least one~\(N(z_i)\) for~\(i\in\{1,2,3\}\).
Thus \(G[V_t] \simeq K_7\) because~\(G[N(z_i)] \simeq K_5\) for~\(i\in\{1,2,3\}\).
Let~\(X = E(G[V_t])\) and note that~\(|X|=21\).
Put
\[
\begin{array}{rll} 
  Y_1 &=&\{z_1v_1v_4,\,z_1v_2v_5,\,z_2v_3v_4,\,z_2v_5v_6,\,z_3v_1v_6,\,z_3v_2v_3\} \mbox{\ and \ }\\
  Y_2 &=&\{v_1v_2v_7,\,v_2v_4v_6,\,v_3v_6v_7,\,v_4v_5v_7,\,v_1v_3v_5\},
\end{array}
\]
and~\(Y = Y_1\cup Y_2\) (Figure~\ref{fig:main}).
Note that~\(|Y| = 11\) and~\(|X| \leq 2\,|Y|\).
Also, every triangle in~\(G\) that contains \(z_1\), \(z_2\), or \(z_3\) contains two vertices in \(\{v_1,\ldots, v_7\}\),
and hence contains an edge of~\(X\);
and every edge of a triangle in \(Y\) is either incident to \(z_1\), \(z_2\), or \(z_3\), or is in~\(X\).    
Therefore~$\big(\{z_1,z_2,z_3\},X,Y\big)$ is a reducing triple for~\(G\),  a contradiction.
This concludes the proof.
\end{proof}

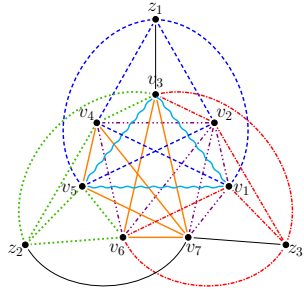
\begin{figure}[ht]
  \centering	
  \scalebox{.5}{\begin{tikzpicture}[scale = 1]

  \foreach \i in {1,...,3}{
    \node(z\i) [black vertex] at (\i*360/3+90-360/3:4) {};	
  }
  \foreach \i in {1,...,3}{
    \node() [] at (\i*360/3+90-360/3:4.3) {{\Large $z_\i$}};	
  }	
    
  \foreach \i in {1,...,7}{
    \node(v\i) [black vertex] at (\i*360/7+90-3*360/7:2) {};	
  }
  \foreach \i in {1,...,7}{
    \node() [] at (\i*360/7+90-3*360/7:2.35) {{\Large $v_\i$}};	
  }	
  
  \draw[edge,dashdotdotted,color=violet] (v1) -- (v2) -- (v7) -- (v1) (v2) -- (v4) -- (v6) -- (v2);
  \draw[edge,color=orange] (v3) -- (v6) -- (v7) -- (v3) (v4) -- (v5) -- (v7) -- (v4);
  \draw[edge,decorate,decoration={snake,amplitude=.2mm},color=cyan] (v1) -- (v3) -- (v5) -- (v1);
  \draw[edge,color=blue,densely dashed] (z1) to [bend left = 60] (v1) (v1) -- (v4) -- (z1) (z1) -- (v2) -- (v5) to [bend left = 60] (z1);
  \draw[edge,color=green!50!olive,line width=0.5mm,dotted] (z2) to [bend left = 60] (v3) (v3) -- (v4) -- (z2) (z2) -- (v5) -- (v6) -- (z2);
  \draw[edge,color=red,densely dashdotted] (z3) -- (v1) -- (v6) (v6) to [bend right = 60] (z3) (z3) -- (v2) -- (v3) to [bend left = 60] (z3);	
  \draw[edge,line width=0.05mm,color=black] (z1) -- (v3) (z2) to [bend right=60] (v7) (z3) -- (v7);

\end{tikzpicture}}
  \caption{
	Reducing triple from the proof of Theorem~\ref{thm:tuza-tw-6}.
	Triangles in \(Y\) containing \(z_1\), \(z_2\), and \(z_3\) are illustrated, 
	respectively, in dashed blue, dotted green, and dashdotted red, 
        while the remaining triangles in \(Y\) are in wavy cyan, solid orange, and dashdotdotted purple.}\label{fig:main}
\end{figure}

Theorem~\ref{thm:tuza-tw-6} strengthens the following result of Tuza~\cite{Tuza90} regarding chordal graphs.
 
\begin{proposition}{\cite[Proposition 3(b)]{Tuza90}}\label{prop:tuza90}
  If $G$ is a $K_5$-free chordal graph, then~$\tau(G) \leq 2\,\nu(G)$.
\end{proposition}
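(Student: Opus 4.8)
The plan is to derive Proposition~\ref{prop:tuza90} as an immediate corollary of Theorem~\ref{thm:tuza-tw-6}, via the classical relation between treewidth and clique number for chordal graphs. First I would recall the standard structural fact that every chordal graph $G$ admits a tree decomposition in which each bag is a clique of $G$ --- the \emph{clique tree} of $G$, obtainable for instance from a perfect elimination ordering of $G$. Since every bag of this decomposition is a clique, its width is at most $\omega(G)-1$, where $\omega(G)$ denotes the number of vertices in a largest clique of $G$; hence $\tw(G)\le\omega(G)-1$ for chordal $G$. (By (T2) and (T3) every clique of $G$ must be contained in a single bag, so in fact $\tw(G)=\omega(G)-1$, but only the inequality is needed here.)

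Next, if $G$ is $K_5$-free, then $\omega(G)\le 4$, and therefore $\tw(G)\le 3\le 6$. Applying Theorem~\ref{thm:tuza-tw-6} to $G$ then gives $\tau(G)\le 2\,\nu(G)$, which is exactly the statement of Proposition~\ref{prop:tuza90}.

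Finally, I would record the strengthening announced in the introduction: the identical argument shows that a $K_8$-free chordal graph $G$ satisfies $\omega(G)\le 7$, hence $\tw(G)\le 6$, so Theorem~\ref{thm:tuza-tw-6} yields $\tau(G)\le 2\,\nu(G)$ for every such $G$. This extends Tuza's hypothesis from $K_5$-free to $K_8$-free chordal graphs, which is the precise sense in which Theorem~\ref{thm:tuza-tw-6} strengthens Proposition~\ref{prop:tuza90}.

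There is essentially no obstacle beyond correctly invoking the chordal-graph structure theorem; all of the combinatorial work has already been carried out inside Theorem~\ref{thm:tuza-tw-6} and its supporting lemmas. The only point requiring a little care is to cite (rather than reprove) the existence of clique trees --- e.g.\ through Gavril's characterization of chordal graphs as intersection graphs of subtrees of a tree, or through a perfect elimination ordering --- and to observe that these clique bags genuinely witness the bound $\tw(G)\le\omega(G)-1$.
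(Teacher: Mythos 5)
Your deduction is correct and non-circular: the proof of Theorem~\ref{thm:tuza-tw-6} nowhere uses Proposition~\ref{prop:tuza90}, a chordal graph indeed satisfies $\tw(G)\le\omega(G)-1$ via a clique-tree decomposition, and $K_5$-freeness then gives $\tw(G)\le 3\le 6$, so Theorem~\ref{thm:tuza-tw-6} applies. But note that the paper does not prove this proposition at all: it is quoted verbatim from Tuza's 1990 paper, where it is established directly for $K_5$-free chordal graphs (independently of treewidth, and of course independently of Theorem~\ref{thm:tuza-tw-6}, which did not exist then). What you have written is, almost word for word, the paper's derivation of the \emph{stronger} Corollary~\ref{corollary:tuza-chordalK8free} ($K_8$-free chordal graphs), of which Proposition~\ref{prop:tuza90} is the special case $t=5$; the paper states this argument right after the proposition precisely to show that Theorem~\ref{thm:tuza-tw-6} strengthens Tuza's result. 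So your route is valid within this paper and buys the stronger $K_8$-free conclusion for free, at the cost of invoking the full machinery of Section~\ref{section:tuza-partial-6-trees}; Tuza's original argument, by contrast, is a much lighter, self-contained proof of the $K_5$-free case, which is why the paper cites it rather than reproving it. If your goal is only Proposition~\ref{prop:tuza90} as stated, the honest attribution is to Tuza; if your goal is to exhibit the strengthening, your paragraph is exactly the paper's Corollary~\ref{corollary:tuza-chordalK8free}.
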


Indeed, if $G$ is chordal, then $\tw(G) \leq\omega(G) - 1$, 
where $\omega(G)$ is the size of a maximum clique in~$G$ (see~\cite[Prop.~12.3.11]{Diestel10}).
Hence, if $G$ is also $K_t$-free, then $\tw(G) \leq t - 2$, 
and Theorem~\ref{thm:tuza-tw-6} implies the following strengthening of Proposition~\ref{prop:tuza90}.

\begin{corollary}\label{corollary:tuza-chordalK8free}
  If $G$ is a $K_8$-free chordal graph, then $\tau(G) \leq 2\,\nu(G)$.
\end{corollary}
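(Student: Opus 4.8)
The plan is to derive this immediately from Theorem~\ref{thm:tuza-tw-6} together with the classical relationship between treewidth and clique number for chordal graphs. First I would recall the standard fact that every chordal graph $G$ satisfies $\tw(G) \leq \omega(G) - 1$, where $\omega(G)$ is the size of a largest clique of $G$ (see~\cite[Prop.~12.3.11]{Diestel10}); this follows, for instance, from the existence of a clique tree, that is, a tree decomposition each of whose bags induces a clique of $G$. Next, since $G$ is $K_8$-free, it has no clique on eight vertices, so $\omega(G) \leq 7$, and therefore $\tw(G) \leq 6$.

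Having established that $G$ has treewidth at most $6$, I would simply invoke Theorem~\ref{thm:tuza-tw-6} to conclude that $\tau(G) \leq 2\,\nu(G)$. There is no real obstacle in this step: all the substance of the argument lies in Theorem~\ref{thm:tuza-tw-6}, and the only points requiring attention are citing the treewidth bound for chordal graphs correctly and observing that $K_t$-freeness forces $\omega(G) \leq t - 1$. I would also remark, as the text preceding the statement does, that since a $K_5$-free chordal graph has treewidth at most $3 \leq 6$, this corollary strictly generalizes Proposition~\ref{prop:tuza90}.
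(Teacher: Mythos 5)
Your proposal is correct and matches the paper's argument exactly: both derive $\tw(G)\leq\omega(G)-1\leq 6$ from chordality and $K_8$-freeness (citing the same fact from Diestel) and then apply Theorem~\ref{thm:tuza-tw-6}. Nothing further is needed.
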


\section{Planar triangulations}\label{section:tuza-planar-triangulation}

A graph is \emph{planar} if it can be drawn in the plane so that its edges intersect only at their ends.
Such a drawing is called a \emph{planar embedding} of the graph. 
A simple graph is a \emph{maximal planar graph} if it is planar 
and adding any edge between two nonadjacent vertices destroys that property. 
In any planar embedding of a maximal planar graph, all faces are bounded by a triangle,
the so called \emph{facial triangles}, so such a graph is also referred to as a \emph{planar triangulation}. 

In this section, we prove that $\tau(G)/\nu(G) \leq 3/2$ 
for every planar triangulation~$G$ different from~$K_4$.
The proof is divided into two parts.
In Lemma~\ref{lem:tuza-planartri-tau-upper-planar} we present an upper bound for $\tau(G)$, 
and in Lemma~\ref{lem:tuza-planartri-nu-lower} we present a lower bound for~$\nu(G)$.
 
For the proof of Lemma~\ref{lem:tuza-planartri-tau-upper-planar}, 
we need the following theorem of Petersen~\cite{Petersen1891}.
A bridge is a cut edge in a graph.  

\begin{theorem}[Petersen, 1981]\label{thm:petersen}
  Every bridgeless cubic graph contains a perfect matching.
\end{theorem}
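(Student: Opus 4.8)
The plan is to derive Petersen's theorem from Tutte's $1$-factor theorem, which states that a graph $G$ has a perfect matching if and only if $q(G-S)\le |S|$ for every $S\subseteq V(G)$, where $q(H)$ denotes the number of connected components of $H$ with an odd number of vertices. So let $G$ be a bridgeless cubic graph and fix an arbitrary $S\subseteq V(G)$; the goal is to show $q(G-S)\le |S|$. First I would dispose of the degenerate case: each connected component of $G$ is itself cubic, so its degree sum $3n$ is even, hence $n$ is even; in particular $q(G-\emptyset)=0$, so the condition holds trivially for $S=\emptyset$ and we may assume $S\neq\emptyset$.

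Next, let $C_1,\dots,C_k$ be the odd components of $G-S$, and for each $i$ let $m_i$ be the number of edges of $G$ with one endpoint in $C_i$; since $C_i$ is a whole component of $G-S$, every such edge has its other endpoint in $S$. The heart of the argument is the claim that $m_i\ge 3$ for every $i$. For the parity part, sum the degrees over $V(C_i)$: this equals $3|V(C_i)|$, which is odd because $|V(C_i)|$ is odd; the edges lying inside $C_i$ contribute an even amount to this sum, so $m_i$ must be odd. For the lower bound, note that $m_i\neq 1$: a single edge joining $C_i$ to the rest of $G$ would be a cut edge, contradicting that $G$ is bridgeless. Hence $m_i$ is an odd integer at least $3$.

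Finally I would double-count the edges of $G$ running between $S$ and $C_1\cup\cdots\cup C_k$. On one hand this count equals $\sum_{i=1}^{k} m_i\ge 3k$. On the other hand every such edge is incident to a vertex of $S$, and the vertices of $S$ supply exactly $3|S|$ edge-endpoints because $G$ is cubic, so $\sum_{i=1}^{k} m_i\le 3|S|$. Combining the two estimates gives $3k\le 3|S|$, that is, $q(G-S)=k\le |S|$. As $S$ was arbitrary, Tutte's condition holds and $G$ has a perfect matching. The only genuinely delicate point is the inequality $m_i\ge 3$, and it is precisely there that both hypotheses are used: cubicity drives the parity argument forcing $m_i$ odd, and bridgelessness rules out $m_i=1$; the rest is routine double counting together with an invocation of Tutte's theorem.
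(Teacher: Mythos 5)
Your proof is correct. There is nothing in the paper to compare it against: the statement is Petersen's classical theorem (1891, despite the typo ``1981'' in the theorem header), which the paper simply cites and uses as a black box in the proof of Lemma~4.2, giving no proof of its own. Your argument is the standard modern derivation from Tutte's $1$-factor theorem, and all the delicate points are handled properly: the parity count shows each odd component $C_i$ of $G-S$ sends an odd number $m_i$ of edges to $S$ (which in particular rules out $m_i=0$), bridgelessness rules out $m_i=1$, so $m_i\ge 3$, and the double count $3k\le\sum_i m_i\le 3|S|$ gives Tutte's condition, the case $S=\emptyset$ being settled by the evenness of each component's order. The only remark worth making is that this route presupposes Tutte's theorem, a later and deeper result than Petersen's original (which argued via $2$-factors/edge colourings), but as a verification of the statement it is complete and correct.
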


Fix a planar embedding of a maximal planar graph \(G\).
The \emph{dual graph} $G^*$ of \(G\) is the graph whose vertex set is the set of faces of $G$,
and in which two vertices are adjacent if the corresponding faces share an edge.
As $G$ is simple, $G^*$ has no bridge. Therefore~$G^*$ is a bridgeless cubic graph. 
In the next result we use that a \(2\)-connected planar graph is bipartite if and only if every face is bounded by an even cycle (see~\cite[Ch.~4, Exercise 24]{Diestel10}).

\begin{lemma}\label{lem:tuza-planartri-tau-upper-planar}
  If \(G\) is a planar graph with $n$ vertices, then \(\tau(G) \leq n-2\), 
  with equality if $G$ is a planar triangulation.
\end{lemma}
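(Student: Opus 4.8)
The upper bound $\tau(G)\le n-2$ should follow directly from Euler's formula. The plan is: take a planar embedding of $G$ (if $G$ is not itself a triangulation, first add edges to extend it to a planar triangulation $G'$ on the same vertex set; a triangle transversal of $G'$ is also one of $G$, since every triangle of $G$ is a triangle of $G'$, so it suffices to prove the bound for triangulations, i.e.\ the ``equality'' case gives the general inequality). For a planar triangulation on $n\ge 3$ vertices, Euler's formula gives $m=3n-6$ edges and $f=2n-4$ faces, all of them triangular. The goal is to hit every facial triangle with few edges; but one must be careful, because hitting every \emph{facial} triangle does not obviously hit every triangle, as a triangulation may contain separating (non-facial) triangles. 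I would handle this by the standard observation that if $X$ is a set of edges meeting every facial triangle, then $G-X$ is triangle-free: indeed in a planar triangulation every triangle bounds a region that is a union of faces, and an edge-minimal such triangle must be a face — more carefully, if $G-X$ had a triangle $T$, then $T$ bounds a disk containing at least one face $F$ of $G$, and the facial triangle $F$ survives in $G-X$ only if no edge of $F$ is in $X$, contradiction once we induct on the number of faces enclosed by $T$ (the innermost triangle is facial). So it is enough to choose $X$ meeting all $f=2n-4$ facial triangles.

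Now I pass to the dual $G^*$, which as observed in the excerpt is a bridgeless cubic graph, so by Theorem~\ref{thm:petersen} it has a perfect matching $M^*$. Each edge of $M^*$ corresponds to an edge of $G$ shared by two facial triangles; let $X$ be this set of $|M^*| = f/2 = n-2$ edges of $G$. Every face of $G$ (= vertex of $G^*$) is covered by exactly one edge of $M^*$, hence every facial triangle of $G$ contains an edge of $X$, so $G-X$ is triangle-free and $\tau(G)\le |X| = n-2$. This proves the inequality for triangulations, hence for all planar graphs.

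For the ``equality if $G$ is a planar triangulation'' claim, I would argue $\tau(G)\ge n-2$: any triangle transversal $X$ must meet all $2n-4$ facial triangles, and since in a triangulation each edge lies on exactly two faces, each edge of $X$ can cover at most two facial triangles, giving $|X|\ge (2n-4)/2 = n-2$. Combining the two bounds yields equality.

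**Main obstacle.** The delicate point I expect to need real care on is exactly the gap between ``meets every facial triangle'' and ``is a triangle transversal'' — i.e.\ handling separating triangles. The Petersen-matching argument cleanly gives an edge set hitting all facial triangles of size $n-2$, but one must justify rigorously that this kills \emph{all} triangles; the innermost-face induction sketched above is the natural fix, though it may be worth noting whether the authors instead invoke the bipartiteness criterion for planar graphs (stated just before the lemma): $G-X$ has all faces bounded by even cycles or is otherwise triangle-free, which would be an alternative route. A second minor point is the base/degenerate cases ($n=3$, where $G=K_3$, $n-2=1=\tau$; and ensuring $n\ge 3$ so Euler applies), which should be dispatched in a sentence.
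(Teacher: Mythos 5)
Your overall architecture matches the paper's (extend to a triangulation, Euler's formula, Petersen's theorem on the dual to get a matching $M^*$ of size $n-2$, and the two-faces-per-edge counting for the lower bound), but the step you yourself flag as the ``main obstacle'' is handled incorrectly. The ``standard observation'' you rely on --- that any edge set $X$ meeting every \emph{facial} triangle makes $G-X$ triangle-free --- is false. Take $G=K_5-de$ drawn with $d$ inside and $e$ outside the separating triangle $abc$, and $X=\{ad,bd,ae,be\}$: every one of the six facial triangles contains an edge of $X$, yet $abc$ survives in $G-X$. Your innermost-face induction does not repair this: a facial triangle enclosed by a surviving triangle $T$ of $G-X$ is allowed to be hit by $X$ without any consequence for $T$, so no contradiction arises, and the argument never invokes the only property that actually saves the day, namely that the matching-based $X$ contains \emph{exactly one} edge of each face.

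Two correct ways to close the gap. The paper's route (which you half-anticipate in your last sentence) uses precisely the ``exactly one edge per face'' property: deleting $M$ merges the faces of $H$ in pairs, so every face of $H-M$ is the symmetric difference of two triangular faces, i.e.\ a $4$-cycle; hence $H-M$ is $2$-connected with all faces even, therefore bipartite by the criterion quoted before the lemma, and in particular triangle-free. Alternatively, a parity argument works: any triangle $T$ of a triangulation encloses a triangulated disk with some number $v\ge 0$ of interior vertices and hence $2v+1$ faces; if no edge of $T$ were in $M$, the perfect matching $M^*$ would match these $2v+1$ faces among themselves, which is impossible, so $T$ contains an edge of $M$ --- facial or not. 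Either argument must replace your reduction to facial triangles; as written, the upper-bound half of your proof has a genuine hole. The lower-bound half ($|X|\ge (2n-4)/2$) is correct and identical to the paper's.
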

\begin{proof}
  We may assume $G$ is simple and distinct from $K_3$.  Let \(H\) be a maximal planar graph containing~\(G\).
  Note that~\(\tau(G)\leq\tau(H)\) and that $G=H$ if $G$ is a planar triangulation.
  By Euler's formula, \(H\) has~\(2n-4\) faces.
  As \(H\) is a planar triangulation, \(H^*\) is a bridgeless cubic graph on \(2n-4\) vertices.
  Thus, \(H^*\) contains a perfect matching~\(M^*\) by Theorem~\ref{thm:petersen}.
  Let \(M\) be the edges of \(H\) corresponding to the edges in~\(M^*\).
  Note that \({|M|=|M^*|=n-2}\), and that every face of~\(H\) contains precisely one edge of~\(M\).
  This implies that every face of \(H - M\) is the symmetric difference of two faces of~\(H\), and so is a cycle of length \(4\).
  Thus \(H - M\) is 2-connected and hence bipartite.
  Therefore~\(H-M\) has no triangles, and hence \(\tau(H) \leq |M| = n-2\).
  Moreover, any triangle transversal $X$ of $H$ must contain an edge 
  in each of the $2n-4$ facial triangles of $H$.  
  Each edge is in exactly two facial triangles, so $|X| \geq n-2$.  
  This implies that in fact \(\tau(H) = n-2\).
\end{proof}

In the proof of Lemma~\ref{lem:tuza-planartri-nu-lower}, 
we will use the well-known Brooks' Theorem~\cite{Brooks41}
on the \emph{chromatic number} \(\chi(G)\) of \(G\).
Recall that $\Delta(G)$ is the maximum degree of~$G$.

\begin{theorem}[Brooks, 1941]\label{thm:tuza-planartri-brooks}
  If \(G\) is a connected graph, 
  then \(\chi(G) \leq \Delta(G)\) or \(G\) is either an odd cycle or a complete graph.
\end{theorem}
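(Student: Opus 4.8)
The plan is to prove Brooks' Theorem by the classical greedy-colouring argument, treating the non-regular and regular cases separately; write \(\Delta:=\Delta(G)\). If \(\Delta\le 2\) the statement is immediate, since a connected graph with \(\Delta\le 2\) is \(K_1\), \(K_2\), a path, or a cycle, and paths and even cycles are \(2\)-colourable while \(K_1,K_2\) and odd cycles are exactly the listed exceptions. So assume \(\Delta\ge 3\) and \(G\ne K_{\Delta+1}\) (a connected graph with \(\Delta\ge3\) is never an odd cycle), and let us produce a proper \(\Delta\)-colouring. The engine is greedy colouring along a carefully chosen vertex order \(x_1,\dots,x_n\): if every \(x_k\) with \(k<n\) has a neighbour \(x_\ell\) with \(\ell>k\), then when \(x_k\) is coloured it has at most \(\Delta-1\) already-coloured neighbours and so gets a colour from \(\{1,\dots,\Delta\}\); thus it only remains to prevent the last vertex \(x_n\) from being forced onto colour \(\Delta+1\).

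First I would dispose of the case that \(G\) is not \(\Delta\)-regular: choose a vertex \(x_n\) with \(d(x_n)<\Delta\), fix a spanning tree, and order the vertices by non-increasing distance from \(x_n\) in that tree, so that every vertex other than \(x_n\) precedes its tree-parent; greedy colouring then works because \(x_n\) itself has fewer than \(\Delta\) neighbours. Next, suppose \(G\) is \(\Delta\)-regular with a cut vertex \(v\): let \(H_1,\dots,H_k\) (\(k\ge2\)) be the subgraphs induced by \(v\) together with one component of \(G-v\) each. Each \(H_i\) is connected, and \(v\) has degree \(<\Delta\) in \(H_i\), so \(H_i\) is not \(\Delta\)-regular and hence \(\Delta\)-colourable by the previous paragraph; permuting colours so that \(v\) gets colour \(1\) in every \(H_i\), the colourings glue along \(v\) to a proper \(\Delta\)-colouring of \(G\).

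The remaining and main case is \(G\) \(\Delta\)-regular and \(2\)-connected, with \(\Delta\ge3\) and \(G\ne K_{\Delta+1}\). Here the crux is to find vertices \(a,b,v\) with \(va,vb\in E(G)\), \(ab\notin E(G)\), and \(G-\{a,b\}\) connected. Given such a triple I would set \(x_1:=a\), \(x_2:=b\) (so both receive colour \(1\), as \(ab\notin E(G)\)), and then list the vertices of \(G-\{a,b\}\) by non-increasing distance from \(v\) in a spanning tree of \(G-\{a,b\}\), so that \(x_n=v\); greedy colouring then succeeds because every \(x_k\) with \(3\le k<n\) still has a later neighbour (its tree-parent) and at most \(\Delta-1\) earlier neighbours even counting \(a,b\), while \(v\) is adjacent to both \(a\) and \(b\), which share colour \(1\), so at most \(\Delta-1\) colours appear on \(N(v)\). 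To locate the triple: if \(G\) is \(3\)-connected, then since \(G\) is not complete there are vertices \(a,b\) at distance \(2\), any common neighbour serves as \(v\), and \(G-\{a,b\}\) is connected by \(3\)-connectivity; if \(G\) is \(2\)-connected but not \(3\)-connected, I would take a \(2\)-cut \(\{x,y\}\), so that \(G-x\) is connected with cut vertex \(y\), pick two ``end'' blocks of \(G-x\), and take \(a,b\) to be neighbours of \(x\) lying in the interiors of these two blocks (such neighbours exist — otherwise the unique cut vertex of such a block would separate \(G\), contradicting \(2\)-connectivity; one also uses \(\Delta\ge3\) to exclude a single-edge end block), with \(v:=x\); then \(ab\notin E(G)\) since \(a,b\) lie in distinct blocks of \(G-x\), and \(G-\{a,b\}\) is still connected since deleting a non-cut vertex of an end block does not disconnect \(G-x\) and \(x\) retains a neighbour. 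I expect this final structural step — exhibiting the triple \(a,b,v\) in the \(2\)-connected, non-\(3\)-connected subcase, and in particular the end-block bookkeeping that keeps \(G-\{a,b\}\) connected — to be the only genuinely delicate point; everything else is routine greedy colouring.
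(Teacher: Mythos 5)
The paper does not prove this statement at all: Brooks' Theorem is quoted as a classical result (with a citation to Brooks, 1941) and is only \emph{used}, in Lemma~\ref{lem:tuza-planartri-nu-lower} and Proposition~\ref{prop:exconj}, to conclude that the dual of a planar triangulation other than $K_4$ is $3$-chromatic. So there is no in-paper argument to compare yours against; what you have written is the standard greedy (Lov\'asz-style) proof, and it is essentially correct. The case split (trivial for $\Delta\le 2$; non-regular case via an ordering ending at a vertex of degree less than $\Delta$; regular with a cut vertex by gluing colourings of the augmented components; regular $2$-connected via a triple $a,b,v$ with $va,vb\in E(G)$, $ab\notin E(G)$ and $G-\{a,b\}$ connected) is the classical route, and your colour-counting in the greedy step is right. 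Two small points deserve a word of care, neither fatal: (i) when you handle a cut vertex $v$ of a $\Delta$-regular graph, the pieces $H_i$ may have maximum degree strictly below $\Delta$, so strictly speaking you apply not ``the previous paragraph'' verbatim but the same greedy argument, which colours any connected graph of maximum degree at most $\Delta$ possessing a vertex of degree less than $\Delta$ with $\Delta$ colours; (ii) in the $2$-connected, non-$3$-connected subcase you delete \emph{two} vertices, one interior vertex from each of two end blocks of $G-x$, so you should say explicitly that each end block minus its chosen interior vertex stays connected (it is either $2$-connected or a $K_2$) and still meets its cut vertex, whence $(G-x)-a-b$ is connected, and then $x$ reattaches via a third neighbour since $\Delta\ge 3$ and $G$ is regular. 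You flagged exactly this as the delicate point, and with those details spelled out the argument is complete.
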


An \emph{independent set} in a graph $G$ is a set of pairwise nonadjacent vertices of $G$. 
One can check that every graph $G$ has an independent set of size at least \(|V(G)|/\chi(G)\).

\begin{lemma}\label{lem:tuza-planartri-nu-lower}
  If \(G\) is a planar triangulation on $n$ vertices, different from~$K_4$, 
  then \({\nu(G)\geq \frac{2}{3}(n-2)}\).
\end{lemma}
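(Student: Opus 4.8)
The statement $\nu(G) \geq \tfrac{2}{3}(n-2)$ for a planar triangulation $G \neq K_4$ on $n$ vertices cries out to be combined with Lemma~\ref{lem:tuza-planartri-tau-upper-planar}, which gives $\tau(G) = n-2$ exactly; together these yield $\tau(G) \leq \tfrac{3}{2}\nu(G)$, which is the theorem we are ultimately after. So what I want is a large edge-disjoint triangle packing, and the obvious source of triangles is the $2n-4$ facial triangles of a fixed planar embedding of $G$. Two facial triangles are edge-disjoint unless they share an edge, i.e.\ unless they are adjacent in the dual $G^*$. Hence an edge-disjoint packing using only facial triangles corresponds exactly to an \emph{independent set} in the dual graph $G^*$. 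Since $G^*$ is a bridgeless cubic graph on $2n-4$ vertices, it has $\Delta(G^*) = 3$, so by Brooks' Theorem $\chi(G^*) \leq 3$ unless $G^*$ is $K_4$ or an odd cycle; a cubic graph is never an odd cycle, and $G^* \simeq K_4$ forces $G \simeq K_4$. Thus for $G \neq K_4$ we get $\chi(G^*) \leq 3$, and therefore $G^*$ has an independent set of size at least $|V(G^*)|/3 = (2n-4)/3$. The facial triangles corresponding to such an independent set form an edge-disjoint triangle packing of $G$ of size $\tfrac{2}{3}(n-2)$, which is precisely the bound claimed.

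\medskip

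So the key steps, in order, are: (1) fix a planar embedding of $G$ and recall from the discussion preceding Lemma~\ref{lem:tuza-planartri-tau-upper-planar} that $G^*$ is a bridgeless cubic graph, necessarily on $2n-4$ vertices by Euler's formula; (2) observe that an independent set $\mathcal{I}$ in $G^*$ corresponds to a set of facial triangles of $G$ that pairwise share no edge, hence to a triangle packing of $G$ of size $|\mathcal{I}|$; (3) apply Brooks' Theorem (Theorem~\ref{thm:tuza-planartri-brooks}) to $G^*$: since $G^*$ is connected (as $G$ is $2$-connected, being a triangulation on at least $4$ vertices), cubic, not an odd cycle, and equal to $K_4$ only if $G = K_4$, we conclude $\chi(G^*) \leq 3$; (4) use the standard fact that a graph on $N$ vertices with chromatic number at most $k$ has an independent set of size at least $N/k$, giving an independent set of size $\lceil (2n-4)/3 \rceil \geq \tfrac{2}{3}(n-2)$; (5) read off $\nu(G) \geq \tfrac{2}{3}(n-2)$.

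\medskip

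\textbf{Where the care is needed.} The genuinely delicate point is making sure the exceptional cases of Brooks' Theorem are handled cleanly. $G^*$ being cubic rules out odd cycles immediately, so the only real obstruction is $G^* \simeq K_4$; I need to argue that $G^* \simeq K_4$ precisely when $G \simeq K_4$ (the octahedron and other triangulations have dual cubic graphs that are not complete, so this is a genuine, if easy, verification — e.g.\ $G^* \simeq K_4$ means $G$ has exactly $4$ faces, hence $2n-4 = 4$, so $n = 4$). A secondary point is connectedness of $G^*$, which follows from $2$-connectedness of $G$ (true for any triangulation on $\geq 4$ vertices), and the requirement $n \geq 4$, which holds since $K_3$ is not usually considered a triangulation and in any case $G \neq K_4$ with $G$ a triangulation forces $n \geq 5$. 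Everything else is bookkeeping: the correspondence between dual-independent-sets and facial-triangle packings is immediate once one notes that distinct facial triangles of a simple planar triangulation are edge-disjoint iff non-adjacent in $G^*$, and the rounding $\lceil (2n-4)/3\rceil \geq \tfrac23(n-2)$ is trivial.
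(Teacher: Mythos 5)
Your proposal is correct and follows essentially the same route as the paper: pass to the cubic dual $G^*$ on $2n-4$ vertices, apply Brooks' Theorem to get a $3$-coloring (your extra care with the $K_4$ and odd-cycle exceptions is a welcome but minor elaboration of what the paper leaves implicit), and convert an independent set of size at least $\tfrac{2n-4}{3}$ in $G^*$ into a packing of facial triangles of $G$.
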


\begin{proof}
  The dual~$G^*$ of $G$ is connected and cubic, different from~$K_4$, and has $2n-4$ vertices.
  By Theorem~\ref{thm:tuza-planartri-brooks}, $\chi(G^*) = 3$.
  Therefore \(G^*\) contains an independent set~$Y^*$ of size at least~$\frac{2n-4}{3}$.
  Let~$Y$ be the facial triangles of~\(G\) corresponding to vertices of $G^*$ in~\(Y^*\).
  Because~$Y^*$ is an independent set in~$G^*$, the set $Y$ is a triangle packing in~$G$.
  Hence $\nu(G) \geq |Y| = |Y^*| \geq \frac{2}{3}(n-2)$.
\end{proof}

In fact, the proof of Lemma~\ref{lem:tuza-planartri-nu-lower} assures that, 
in any planar triangulation different from~$K_4$, with~$f$ facial triangles, 
there exists a packing of triangles with at least $f/3$ facial triangles.  
This result will be slightly strengthened in the next subsection for a 
particular type of planar triangulation, and its strengthening will 
be used in Section~\ref{section:tuza-3-trees}, which addresses 3-trees.

The main result of this section comes directly from 
Lemmas~\ref{lem:tuza-planartri-tau-upper-planar} and~\ref{lem:tuza-planartri-nu-lower}.

\begin{theorem}\label{thm:tuza-triangulations}
  If \(G\) is a planar triangulation different from~$K_4$,
  then ${\tau(G) \leq \frac{3}{2}\,\nu(G)}$.
\end{theorem}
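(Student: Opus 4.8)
The plan is to combine directly the two lemmas already proved in this section, which together bound $\tau(G)$ from above and $\nu(G)$ from below in terms of $n=|V(G)|$. First I would apply Lemma~\ref{lem:tuza-planartri-tau-upper-planar} to obtain $\tau(G)\leq n-2$ (equality in fact holds since $G$ is a triangulation, but only the inequality is needed). Then I would apply Lemma~\ref{lem:tuza-planartri-nu-lower}, which is exactly where the hypothesis $G\neq K_4$ enters, to obtain $\nu(G)\geq\frac{2}{3}(n-2)$. Chaining these, $\tau(G)\leq n-2=\frac{3}{2}\cdot\frac{2}{3}(n-2)\leq\frac{3}{2}\,\nu(G)$, which is the assertion.

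No genuine obstacle remains at this stage; all of the work lies in the two lemmas. Lemma~\ref{lem:tuza-planartri-tau-upper-planar} used Petersen's theorem to extract a perfect matching in the bridgeless cubic dual $G^*$, whose preimage $M$ in $G$ meets every facial triangle exactly once and whose removal leaves a $2$-connected graph all of whose faces are $4$-cycles, hence bipartite and triangle-free, giving $\tau(G)=|M|=n-2$. Lemma~\ref{lem:tuza-planartri-nu-lower} used Brooks' theorem to $3$-colour $G^*$ — the condition $G^*\neq K_4$ being precisely what permits $\chi(G^*)\leq 3$ — producing an independent set of size at least $\frac{2n-4}{3}$ in $G^*$ and hence a packing of that many facial triangles in $G$. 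The exclusion of $K_4$ is necessary: for $G=K_4$ one has $\tau(G)=2$ and $\nu(G)=1$, so the ratio is $2>\frac{3}{2}$, which is exactly the boundary case that forces the hypothesis and is harmless everywhere else.
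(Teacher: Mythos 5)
Your proposal is correct and follows exactly the paper's argument: the theorem is stated to follow directly from Lemmas~\ref{lem:tuza-planartri-tau-upper-planar} and~\ref{lem:tuza-planartri-nu-lower}, chained precisely as you do. Your side remarks on where $G\neq K_4$ is used and why it is necessary are also consistent with the paper.
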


Note that $K_5-e$ is a planar triangulation and $\nu(K_5-e) = 2$, 
hence the bound given by Theorem~\ref{thm:tuza-triangulations} is tight, 
because $\tau(K_5-e) = 3$ by Lemma~\ref{lem:tuza-planartri-tau-upper-planar}.
In fact, there is an infinite class of planar triangulations for which this bound is tight. 
This is a consequence of Lemma~\ref{lem:tuza-planartri-tau-upper-planar} and the next lemma.

\begin{lemma}\label{lem:tight}
  Let $G$ be a planar triangulation with no separating triangle. 
  Let $H$ be the planar triangulation obtained from $G$ by adding, 
  for each facial triangle $t$ of $G$, 
  a new vertex $v_t$ adjacent to the three vertices in $t$. 
  Then $\nu(H) \leq 2n-4$, where $n$ is the number of vertices in $G$.
\end{lemma}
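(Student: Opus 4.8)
The plan is to show that every triangle of \(H\) ``sits over'' a unique facial triangle of \(G\), and that the triangles sitting over a single facial triangle of \(G\) pairwise share an edge; then a triangle packing of \(H\) uses at most one triangle over each facial triangle of \(G\), and \(G\) has only \(2n-4\) of those. I would begin by recording the structure of \(H\). Since \(G\) is a planar triangulation on \(n\) vertices, it has \(2n-4\) facial triangles, so \(H\) is obtained from \(G\) by adding \(2n-4\) new vertices, each of degree \(3\) in \(H\) and each adjacent only to the three vertices of its facial triangle; in particular the new vertices form an independent set of \(H\). We may assume \(n\ge 4\), so that \(G\) is \(3\)-connected and distinct facial triangles of \(G\) have distinct vertex sets (for \(n=3\) one checks directly that \(H=K_5-e\) and \(\nu(H)=2=2n-4\)).

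Next I would classify the triangles of \(H\) according to the number of new vertices they contain. No triangle of \(H\) contains two new vertices, because the new vertices are pairwise nonadjacent. A triangle of \(H\) containing no new vertex is a triangle of \(G\), since adding the new vertices creates no edge inside \(V(G)\); here I would use the hypothesis that \(G\) has no separating triangle, which for a planar triangulation forces every triangle to bound a face, so these are precisely the \(2n-4\) facial triangles of \(G\). A triangle of \(H\) containing exactly one new vertex \(v_t\) has its remaining two vertices in \(N_H(v_t)\), that is, among the three (pairwise adjacent) vertices of the facial triangle \(t\); hence it consists of \(v_t\) together with one of the three edges of \(t\). Consequently every triangle of \(H\) is associated with a unique facial triangle \(t\) of \(G\): it is either \(t\) itself, or \(v_t\) together with an edge of \(t\), and there are exactly four triangles of \(H\) associated with each facial triangle \(t=abc\), namely \(abc\), \(v_tab\), \(v_tbc\), and \(v_tca\).

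The key step is then to observe that these four triangles associated with a facial triangle \(t=abc\) pairwise share an edge, which I would verify directly from their edge sets: any two of the four contain a common vertex \(v\in\{a,b,c\}\) and use the same edge of \(H\) at \(v\) --- namely an edge \(v v_t\) for two of the small triangles, or an edge of \(t\) incident to \(v\) whenever \(abc\) is one of the two. To conclude, let \(P\) be a triangle packing of \(H\). Every triangle of \(P\) is associated with exactly one facial triangle of \(G\), and since the triangles associated with a fixed facial triangle pairwise intersect in an edge and \(P\) is edge-disjoint, \(P\) contains at most one triangle associated with each facial triangle of \(G\). As \(G\) has \(2n-4\) facial triangles, \(|P|\le 2n-4\), so \(\nu(H)\le 2n-4\).

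The computations are routine; the step that needs care is the classification of the triangles of \(H\), and in particular the use of the no-separating-triangle hypothesis to rule out any triangle of \(H\) lying inside \(G\) other than the facial ones, so that the assignment of each triangle of \(H\) to the facial triangle it sits over is well defined and the per-face counting goes through.
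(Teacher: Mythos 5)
Your proof is correct and follows essentially the same route as the paper: both use the no-separating-triangle hypothesis to identify every triangle of $H$ lying in $G$ with a facial triangle of $G$, and then bound the packing by a per-face counting based on pairwise edge-sharing. The only (harmless) difference is organizational: you group all four triangles over a face $t$ together and note they pairwise share an edge, whereas the paper first replaces each non-facial triangle of the packing by a facial triangle of $H$ at the corresponding $v_t$ and then counts at most one facial triangle per $v_t$.
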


\begin{proof}
  Let $X$ be a triangle packing containing only facial triangles of $H$.
  Each vertex $v_t$ is in three facial triangles of $H$, 
  and each two of these three facial triangles share an edge.  
  So at most one of the three facial triangles containing $v_t$ is in $X$. 
  As there are $2n-4$ different vertices $v_t$ in~$H$, 
  the packing $X$ contains at most $2n-4$ facial triangles. 
  To complete the proof, it is enough to prove that, 
  for each triangle packing $Y$ of $H$, 
  there is a triangle packing $X$ with at least~$|Y|$ triangles, 
  consisting of only facial triangles of $H$.  
  Every triangle in $Y$ that is not facial is a triangle in $G$.  
  As~$G$ has no separating triangle, such triangle is facial in~$G$ 
  and therefore we can replace it in~$X$ by one of the facial triangles in~$H$ 
  incident to the vertex added to the corresponding face of~$G$. 
\end{proof}

A planar triangulation $H$ as in the statement of Lemma~\ref{lem:tight} has $n+2n-4 = 3n-4$ vertices, 
so Lemma~\ref{lem:tuza-planartri-nu-lower} implies that~${\nu(H) \geq \frac{2}{3}(3n-6) = 2n-4}$
and Lemma~\ref{lem:tuza-planartri-tau-upper-planar} implies that~${\tau(H) = 3n-6}$.  
Hence, $\tau(H) = \frac32\,\nu(H)$.
Moreover, there are infinitely many planar triangulations with no separating triangles 
(Figure~\ref{fig:triangulation}).

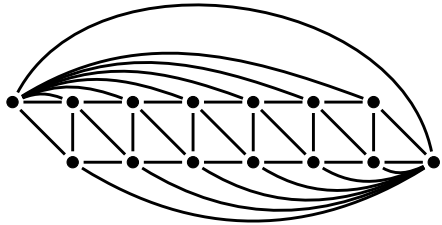
\begin{figure}[ht]
  \centering
  \begin{tikzpicture}[scale = .8]

  \foreach \i in {0,...,6}{
    \node(u\i) [black vertex] at (\i,1) {};	
  }

  \foreach \i in {0,...,6}{
    \node(v\i) [black vertex] at (\i+1,0) {};	
  }

  \foreach \i in {1,...,6}{
    \draw[edge,color=black] (u0) to [bend left=20+2*\i] (u\i);
  }

  \foreach \i in {0,...,5}{
    \draw[edge,color=black] (v6) to [bend left=32-2*\i] (v\i);
  }

  \draw[edge,color=black] (u0) to [bend left=70] (v6);

  \draw[edge,color=black] (u0) -- (u1) -- (u2) -- (u3) -- (u4) -- (u5) -- (u6);
  \draw[edge,color=black] (v0) -- (v1) -- (v2) -- (v3) -- (v4) -- (v5) -- (v6);

  \draw[edge,color=black] (u0) -- (v0) -- (u1) -- (v1) -- (u2) -- (v2) -- (u3) -- (v3) -- (u4) -- (v4) -- (u5) -- (v5) -- (u6) -- (v6);

\end{tikzpicture}
  \caption{A planar triangulation with no separating triangle.}\label{fig:triangulation}
\end{figure}

The hypothesis of $G$ having no separating triangle in Lemma~\ref{lem:tight} is necessary. 
For instance, if we consider $H$ obtained as in Lemma~\ref{lem:tight} from the planar triangulation $K_5-e$, which has a separating triangle and has $n=5$ vertices, then~$\nu(H) = 7 > 6 = 2n-4$.

\subsection{Restricted $3$-trees}

As observed after Lemma~\ref{lem:tuza-planartri-nu-lower}, 
any planar triangulation contains a packing of facial triangles with at least one third of its facial triangles.
We start by proving that, if we require the packing to contain some specific 
facial triangle, then such a packing exists with almost as many facial triangles. 

Recall that a planar triangulation can be drawn in the plane with 
any of its facial triangles as the boundary of the external face of the planar embedding.
Let us refer to the facial triangle corresponding 
to the external face of an embedding as the \emph{external facial triangle}. 

\begin{proposition}\label{prop:exconj}
  For an arbitrary planar embedding of a planar triangulation $G$ different from~$K_4$, 
  there is a triangle packing \(\calP\) of facial triangles of \(G\) containing the external facial triangle, 
  and such that \(|\calP|\geq \lceil (f-1)/3\rceil\), where~\(f\) is the number of facial triangles of $G$.
\end{proposition}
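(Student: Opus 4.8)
The plan is to exploit the structure of the dual graph $G^*$, which is connected, cubic, $3$-chromatic, and distinct from $K_4$ (as in Lemma~\ref{lem:tuza-planartri-nu-lower}), but now working with a fixed proper $3$-colouring rather than a single independent set. Fix the external facial triangle of the embedding; it corresponds to a vertex $w^* \in V(G^*)$. The strategy is to produce a proper $3$-colouring of $G^*$ together with a modification argument that forces $w^*$ to lie in the largest colour class, so that colour class is an independent set of size at least $\lceil (2n-4-1)/3 \rceil \geq \lceil (f-1)/3\rceil$ (recalling $f = 2n-4$) and contains $w^*$.

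First I would invoke Brooks' Theorem (Theorem~\ref{thm:tuza-planartri-brooks}) to fix a proper $3$-colouring $c$ of $G^*$ with colour classes $C_1, C_2, C_3$, ordered so that $|C_1| \geq |C_2| \geq |C_3|$; then $|C_1| \geq \lceil (2n-4)/3 \rceil$. The issue is that $w^*$ need not lie in $C_1$. To fix this, I would argue that in a cubic graph one can always recolour a single vertex into any desired colour at the cost of at most $3$ recolourings elsewhere — more carefully, I would take the colouring $c$ among all proper $3$-colourings of $G^*$ that maximizes $|c^{-1}(c(w^*))|$ (the size of the class containing $w^*$) and, subject to that, maximizes $|C_1|$; then a Kempe-chain swap on the component of $G^*[c^{-1}(i) \cup c^{-1}(j)]$ containing $w^*$ lets me move $w^*$ between colours without decreasing the size of its class, and a short case analysis on the three neighbours of $w^*$ shows its class can be taken of size at least $\lceil (2n-4)/3\rceil$, hence at least $\lceil(f-1)/3\rceil$ once we account for the $-1$. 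The main obstacle is precisely this argument that $w^*$ can be placed in a sufficiently large colour class; the clean way is probably to observe that deleting $w^*$ from $G^*$ leaves a graph on $2n-5$ vertices that is still $3$-colourable (it is a subgraph of $G^*$), colour $G^* - w^*$ with classes $D_1, D_2, D_3$ of sizes summing to $2n-5$ with $|D_1| \geq \lceil (2n-5)/3\rceil$, and then add $w^*$ back: since $d_{G^*}(w^*) = 3$, at least one colour, say the one of $D_1$ if it is free at $w^*$ — and if not, a single Kempe swap among the at most $3$ neighbours frees one of the two largest classes — can be assigned to $w^*$, giving a class containing $w^*$ of size at least $\lceil (2n-5)/3\rceil + 1 \geq \lceil (f-1)/3\rceil + 1 \geq \lceil (f-1)/3\rceil$.

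Once $w^*$ sits in an independent set $Y^*$ of $G^*$ with $|Y^*| \geq \lceil (f-1)/3\rceil$, I would translate back: let $\calP$ be the set of facial triangles of $G$ corresponding to the vertices of $Y^*$. Exactly as in Lemma~\ref{lem:tuza-planartri-nu-lower}, independence of $Y^*$ in $G^*$ means no two triangles in $\calP$ share an edge, so $\calP$ is a triangle packing of facial triangles; and $w^* \in Y^*$ guarantees the external facial triangle is in $\calP$. This yields $|\calP| \geq \lceil (f-1)/3\rceil$ as required. The only care needed is the bookkeeping relating $f$, $n$, and the ceiling: since $f = 2n-4$, we have $\lceil(2n-5)/3\rceil + 1 = \lceil (2n-5+3)/3\rceil = \lceil (2n-2)/3\rceil \geq \lceil (2n-5)/3\rceil = \lceil (f-1)/3\rceil$, so the bound comfortably goes through, and the hypothesis $G \neq K_4$ (equivalently $G^* \neq K_4$) is exactly what Brooks' Theorem needs to guarantee $\chi(G^*) = 3$.
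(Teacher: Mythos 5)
Your overall strategy (pass to the cubic dual $G^*$, find a large independent set containing the vertex $w^*$ of the external face, translate back to facial triangles) is the same as the paper's, and the final translation step and the arithmetic with $f=2n-4$ are fine. The gap is in the one step that carries all the content: forcing $w^*$ into a \emph{large} colour class. Your ``clean way'' colours $G^*-w^*$ and claims that, if the colour of the largest class $D_1$ is blocked at $w^*$, ``a single Kempe swap among the at most 3 neighbours frees one of the two largest classes.'' This is not justified and, as stated, is not true in general: (i) when the three neighbours of $w^*$ see all three colours, a swap of the $\{i,j\}$-Kempe chain through one neighbour may simply exchange the colours of two neighbours (if they lie in the same chain), and in the configuration where the three neighbours are pairwise joined by the relevant Kempe chains no single swap frees any colour -- ruling this configuration out is essentially the hard part of Brooks' theorem, not a one-line observation; (ii) even when a swap does free a colour, you have no control over \emph{which} colour is freed, and freeing the second-largest class does not suffice: a proper $3$-colouring of $G^*-w^*$ can be very unbalanced (e.g.\ $|D_1|$ close to half the vertices), in which case $|D_2|+1$ can be well below $\lceil(2n-5)/3\rceil=\lceil(f-1)/3\rceil$. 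Your earlier ``maximize the class of $w^*$, then do Kempe swaps'' variant has the same unproved core, so the proposal as written does not establish the bound.

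The paper avoids this extension problem entirely with a simpler move: instead of deleting only $w^*$, delete its whole closed neighbourhood, i.e.\ set $G'=G^*-N[v^*]$ (four vertices, since $G^*$ is cubic). Every component of $G'$ has maximum degree at most $3$ and none is $K_4$ (a $K_4$-component would force $G^*=K_4$, excluded), so $G'$ is $3$-colourable and contains an independent set $I'$ of size at least $\lceil(f-4)/3\rceil$. Since all neighbours of $v^*$ were removed, $I'\cup\{v^*\}$ is independent in $G^*$, and $\lceil(f-4)/3\rceil+1=\lceil(f-1)/3\rceil$ gives exactly the required bound -- no recolouring or Kempe-chain argument is needed. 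If you want to salvage your approach, you should either prove the extension claim rigorously (which amounts to redoing a Brooks-type argument) or adopt this deletion of $N[v^*]$, which costs only three extra vertices and makes the counting come out exactly.
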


\begin{proof}
  Let \(v^*\) be the vertex of \(G^*\) corresponding to the external face of \(G\), and let \(G' = G^* - N[v^*]\).  
  By Theorem~\ref{thm:tuza-planartri-brooks}, \(G'\) contains an independent set \(I'\) of size \(\lceil(f-4)/3\rceil\),
  hence the set \(\calP\) of facial triangles of \(G\) corresponding to the vertices in \(I'\cup\{v^*\}\)
  is a triangle packing of facial triangles of~\(G\), containing the external facial triangle,
  and \(|\calP|\geq \lceil(f-4)/3\rceil+1 \geq \lceil(f-1)/3\rceil\).
\end{proof}

Figure~\ref{fig:counterexample} illustrates that Proposition~\ref{prop:exconj} is best possible. 

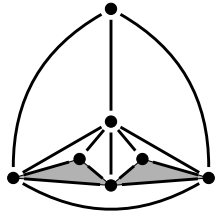
\begin{figure}[h]
  \centering
  \begin{tikzpicture}[scale = 0.5]

	\draw[fill=gray!60] (-90:1.7) -- (210:3) -- (-130:1.3) -- (-90:1.7);
	\draw[fill=gray!60] (-90:1.7) -- (-30:3) -- (-50:1.3) -- (-90:1.7);
			
	\node (a) [black vertex] at (0,0) {};
	\node (b) [black vertex] at (90:3) {};
	\node (c) [black vertex] at (210:3) {};
	\node (d) [black vertex] at (-30:3) {};
        \draw[edge] (a) -- (b) (a) -- (c) (a) -- (d) (b) to [bend right=30] (c) (c) to [bend right=30] (d) (d) to [bend right=30] (b);
        
        \node (g) [black vertex] at (-90:1.7) {};
        \draw[edge] (g) -- (a) (g) -- (c) (g) -- (d);
        \node (g1) [black vertex] at (-130:1.3) {};
       \draw[edge] (g1) -- (a) (g1) -- (c) (g1) -- (g);
        \node (g2) [black vertex] at (-50:1.3) {};
       \draw[edge] (g2) -- (a) (g2) -- (g) (g2) -- (d);
\end{tikzpicture}
  \caption{A planar triangulation whose embedding has ten faces
    and a triangle packing containing three facial triangles, one of them being the external facial triangle.}
  \label{fig:counterexample}
\end{figure}

The planar triangulation in Figure~\ref{fig:counterexample} happens to be a 3-tree. 
Next we present a special family of planar 3-trees for which we can always guarantee 
a packing containing the external facial triangle and with at least one third of its facial triangles.
This result will be used in the next section.

One can prove by induction on \(n\) that any facial triangle in a planar 3-tree \(G\) 
on \(n \geq 4\) vertices is contained in a copy of~\(K_4\) in~\(G\).
Given a planar embedding of a planar 3-tree \(G\) distinct from $K_3$,
we say that the copy of~\(K_4\) in~\(G\) that contains the external facial triangle is the \emph{root clique} of~\(G\), 
and we denote by \(r(G)\) the maximum number of vertices of \(G\) inside a face of its root clique.
For example, the graph \(G\) in Figure~\ref{fig:counterexample} is such that~\(r(G)=3\).
We say that a planar \(3\)-tree \(G\) is \emph{restricted} if it is not $K_3$ and \(r(G)=2\).
If, additionally, the root clique of \(G\) has two faces with precisely one vertex of \(G\) inside,
then we say that \(G\) is \emph{super restricted}.
Note that a restricted planar 3-tree has at least~6 and at most~10 vertices,
and hence has an even number of faces between~8 and~16.

\begin{proposition}\label{prop:restricted-3-tree}
  For an arbitrary planar embedding of a restricted (resp.\ super restricted) planar \(3\)-tree $G$, 
  there is a triangle packing \(\calP\) of facial triangles of \(G\)
  containing the external facial triangle, and such that \(|\calP|\geq \lceil f/3\rceil\) (resp.~\(|\calP|=5\)), 
  where~\(f\) is the number of faces in the embedding.
\end{proposition}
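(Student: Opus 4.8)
The proposition asks for a triangle packing of facial triangles of a restricted (resp.\ super restricted) planar $3$-tree $G$ that contains the external facial triangle and has size at least $\lceil f/3\rceil$ (resp.\ exactly $5$). Since a restricted planar $3$-tree has $f\in\{8,10,12,14,16\}$ faces, the target $\lceil f/3\rceil$ is $3,4,4,5,6$ respectively. The plan is to exploit the structure of a restricted planar $3$-tree: its root clique $Q$ (a copy of $K_4$ with the external facial triangle among its faces) has four faces, and since $r(G)=2$, each of the (at most three) non-external faces of $Q$ contains at most $2$ vertices of $G$ inside it. So $G$ decomposes, across the three interior faces of $Q$, into three planar $3$-trees $G_1,G_2,G_3$ each having at most $5$ vertices, hence each being $K_3$, $K_4$, or $K_5-e$ (the unique planar $3$-tree on $5$ vertices). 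The idea is to build $\calP$ from the external facial triangle of $Q$ together with one well-chosen facial triangle from each $G_i$ that contains vertices inside, using the explicit small-case analysis to count.

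**Carrying it out.** First I would fix the planar embedding, identify the root clique $Q = \{a,b,c,d\}$ where $abc$ is the external facial triangle, and put $abc$ into $\calP$. Next I would classify each of the three faces $abd$, $bcd$, $acd$ of $Q$ by the number $n_i\in\{0,1,2\}$ of vertices of $G$ strictly inside it; note $f = 4 + \sum_i m_i$ where $m_i$ is the number of faces strictly inside face $i$, and for a planar $3$-tree stuffing $n_i$ vertices into a triangle one has $m_i = 2n_i+1$ if $n_i\geq 1$ and the face is simply the single facial triangle ($m_i=1$) if $n_i=0$. Then for each interior face with $n_i\geq 1$ I would exhibit a facial triangle $T_i$ inside it, edge-disjoint from $abc$ and from the other $T_j$'s (they live in different faces of $Q$, so edge-disjointness across $i$ is automatic; edge-disjointness from $abc$ holds because $abc$'s edges are on the outer boundary). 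The counting then reduces to: for each $i$ with $n_i=1$ the sub-triangulation has $3$ faces and we can take $1$ facial triangle (attaining $\lceil 3/3\rceil$); for each $i$ with $n_i=2$ it has $5$ faces and we take $1$ or, better, we appeal to Proposition~\ref{prop:exconj}/Proposition~\ref{prop:restricted-3-tree}'s local version to get $\lceil(5-1)/3\rceil = 2$ facial triangles, one of which can be chosen to touch the separating edge so as not to interfere. Summing $1$ (for $abc$) plus the per-face contributions and checking against $\lceil f/3\rceil$ in each of the finitely many profiles $(n_1,n_2,n_3)$ with $\sum n_i\leq 6$ and at least one $n_i\geq 1$ (so $f\geq 8$) completes the restricted case; for the super restricted case, two of the $n_i$ equal $1$, forcing $f$ and forcing $|\calP|=5$ exactly by the same bookkeeping together with the upper bound that at most one facial triangle per $v_t$-style vertex can be used (as in Lemma~\ref{lem:tight}).

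**The main obstacle.** The delicate point is not the outer-face triangle or the crude lower bounds, but verifying that one can \emph{simultaneously} pick the inner triangles $T_i$ so that, when an interior face $i$ has $n_i=2$ and we want $2$ triangles from it, those $2$ triangles are edge-disjoint from each other \emph{and} the whole collection still hits the exact target rather than falling one short; the rounding in $\lceil\cdot\rceil$ is tight (as Figure~\ref{fig:counterexample} shows for the unrestricted bound), so the argument must use $r(G)=2$ in an essential way — specifically that a face stuffed with $2$ vertices is a $K_5-e$ glued on an edge of $Q$, for which an explicit packing of $2$ facial triangles avoiding a prescribed boundary edge exists. I would handle this by a short explicit case analysis of the $K_5-e$ gadget (listing its $5$ faces and exhibiting the two edge-disjoint facial triangles, one using the inner apex and one not, neither using the chosen boundary edge), and then a finite table over the profiles $(n_1,n_2,n_3)$. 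For the equality "$|\calP| = 5$" in the super restricted case I would also invoke the matching upper bound: each of the $\sum n_i \le 4$ interior apex vertices lies in three pairwise edge-adjacent facial triangles, so contributes at most one triangle to any facial packing, giving $|\calP|\le 1 + \sum n_i \le 5$, and the super-restricted hypothesis (two faces with exactly one interior vertex) pins down the configuration enough to force $\sum n_i$ and hence equality.
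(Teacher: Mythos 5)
Your overall strategy (root clique plus at most three small gadgets, each a $K_4$ or a $K_5-e$ glued on a face of the root clique) is viable, but two of its load-bearing claims are false as stated, and this is exactly where the difficulty of the proposition lives. First, edge-disjointness of the chosen triangles from the external facial triangle $abc$ is \emph{not} automatic: each internal face of the root clique shares one edge with $abc$ (namely $ab$, $ac$ or $bc$), and facial triangles inside that face may use it. Second, and more importantly, edge-disjointness "across $i$" is \emph{not} automatic either: two internal faces of the root clique share a root-clique edge ($ad$, $bd$ or $cd$), and triangles chosen inside different faces can both use that shared edge. This matters because in a $K_5-e$ gadget glued on, say, $abd$, every pair of edge-disjoint facial triangles uses at least one of $ab,ad,bd$, and for some placements of the second interior vertex every such pair avoiding $ab$ is \emph{forced} to use a specific one of $ad$ or $bd$; if the neighbouring face is forced onto the same edge, one of the two faces can contribute only one triangle, and your accounting "$1$ per face with $n_i=1$, $2$ per face with $n_i=2$" breaks. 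The totals do still come out right (e.g.\ in the tight super-restricted profile $(2,1,1)$ the doubly stuffed face is forced onto $ad$ or $bd$, which then forces the choices in the two singly stuffed faces, and one checks the chain of forced choices is always consistent), but that coordination argument -- a case analysis over which of $ad,bd,cd$ each gadget is forced to use, not merely over the profiles $(n_1,n_2,n_3)$ -- is the actual content of the proof and is missing from your proposal; your "avoid a prescribed boundary edge" gadget lemma is too weak, since a gadget may have to avoid two boundary edges, in which case it yields only one triangle.

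A secondary flaw: your upper-bound argument for the super restricted case is incorrect, because the first interior vertex $u$ of the doubly stuffed face lies in \emph{four} facial triangles after the second vertex is added, two of which are edge-disjoint, so "at most one facial triangle per interior vertex" does not hold. (This is harmless: once a packing of at least five facial triangles containing the external one exists, a sub-packing of exactly five containing it exists, which is all the statement and its later application require.) For comparison, the paper avoids all of this coordination by a different route: it applies Proposition~\ref{prop:exconj} to get $\lceil (f-1)/3\rceil$, observes this already equals $\lceil f/3\rceil$ unless $f\in\{10,16\}$, and disposes of those residual cases (and the two super restricted graphs with $f=12$) by exhibiting explicit packings in the finitely many restricted $3$-trees concerned (Figure~\ref{fig:restricted-3-tree}). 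Your construction, once the forced-edge case analysis is supplied, would give an alternative proof that does not rely on inspecting those figures, but as written it has a genuine gap.
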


\begin{proof}
  By Proposition~\ref{prop:exconj}, there is a packing \(\calP\) of facial triangles of~\(G\) 
  including the external facial triangle, and such that \(|\calP|\geq \lceil (f-1)/3\rceil\).
  If \(f-1 \not\equiv 0 \pmod{3}\), then \(\lceil(f-1)/3\rceil = \lceil f/3\rceil\),
  and \(\calP\) is the desired triangle packing.
  So we may assume that \(f-1 \equiv 0 \pmod{3}\).
  In this case, we have \(f\in\{10,16\}\), 
  and hence~\(G\) is one of the graphs in Figures~\ref{fig:restricted-1}--\ref{fig:restricted-10}, 
  which have either ten faces and a packing with at least four facial triangles including the external facial triangle;
  or~sixteen faces and a packing of at least six facial triangles including the external facial triangle.
  If \(G\) is super restricted, 
  then~\(G\) is one of the graphs in Figures~\ref{fig:restricted-11} and~\ref{fig:restricted-12},
  which have twelve faces and a packing of at least five facial triangles including the external facial triangle.
\end{proof}

 \begin{figure}[h]
 	\centering
 	\begin{subfigure}[b]{.24\linewidth}
 		\centering
 		\begin{tikzpicture}[scale = 0.4]

	\draw[fill=gray!60] (90:3) -- (150:1.6) -- (150:.8) -- (90:3);
	\draw[fill=gray!60] (0,0) -- (150:.8) -- (210:3) -- (0,0);
	\draw[fill=gray!60] (0,0) -- (-90:1.2) -- (-30:3) -- (0,0);
		
	\node (a) [black vertex] at (0,0) {};
	\node (b) [black vertex] at (90:3) {};
	\node (c) [black vertex] at (210:3) {};
	\node (d) [black vertex] at (-30:3) {};
        \draw[edge] (a) -- (b) (a) -- (c) (a) -- (d) (b) to [bend right=30] (c) (c) to [bend right=30] (d) (d) to [bend right=30] (b);
        
        \node (e) [black vertex] at (150:.8) {};
        \draw[edge] (e) -- (a) (e) -- (b) (e) -- (c);
        \node (f) [black vertex] at (150:1.6) {};
        \draw[edge] (f) -- (e) (f) -- (b) (f) -- (c);
        \node (g) [black vertex] at (-90:1.2) {};
        \draw[edge] (g) -- (a) (g) -- (c) (g) -- (d);

\end{tikzpicture}
		\caption{}\label{fig:restricted-1}
 	\end{subfigure}
 	\begin{subfigure}[b]{.24\linewidth}
 		\centering
 		\begin{tikzpicture}[scale = 0.4]

	\draw[fill=gray!60] (90:3) -- (150:1.6) -- (0,0) -- (90:3);
	\draw[fill=gray!60] (0,0) -- (180:1.2) -- (210:3) -- (0,0);
	\draw[fill=gray!60] (0,0) -- (-90:1.2) -- (-30:3) -- (0,0);
		
	\node (a) [black vertex] at (0,0) {};
	\node (b) [black vertex] at (90:3) {};
	\node (c) [black vertex] at (210:3) {};
	\node (d) [black vertex] at (-30:3) {};
        \draw[edge] (a) -- (b) (a) -- (c) (a) -- (d) (b) to [bend right=30] (c) (c) to [bend right=30] (d) (d) to [bend right=30] (b);
        
        \node (e) [black vertex] at (150:1.6) {};
        \draw[edge] (e) -- (a) (e) -- (b) (e) -- (c);
        \node (f) [black vertex] at (180:1.2) {};
        \draw[edge] (f) -- (e) (f) -- (a) (f) -- (c);
        \node (g) [black vertex] at (-90:1.2) {};
        \draw[edge] (g) -- (a) (g) -- (c) (g) -- (d);

\end{tikzpicture}
		\caption{}\label{fig:restricted-2}
 	\end{subfigure}
 	\begin{subfigure}[b]{.24\linewidth}
 		\centering
 		\begin{tikzpicture}[scale = 0.4]

	\draw[fill=gray!60] (90:3) -- (150:1.6) -- (120:1.2) -- (90:3);
	\draw[fill=gray!60] (0,0) -- (150:1.6) -- (210:3) -- (0,0);
	\draw[fill=gray!60] (0,0) -- (-90:1.2) -- (-30:3) -- (0,0);
		
	\node (a) [black vertex] at (0,0) {};
	\node (b) [black vertex] at (90:3) {};
	\node (c) [black vertex] at (210:3) {};
	\node (d) [black vertex] at (-30:3) {};
        \draw[edge] (a) -- (b) (a) -- (c) (a) -- (d) (b) to [bend right=30] (c) (c) to [bend right=30] (d) (d) to [bend right=30] (b);
        
        \node (e) [black vertex] at (150:1.6) {};
        \draw[edge] (e) -- (a) (e) -- (b) (e) -- (c);
        \node (f) [black vertex] at (120:1.2) {};
        \draw[edge] (f) -- (e) (f) -- (a) (f) -- (b);
        \node (g) [black vertex] at (-90:1.2) {};
        \draw[edge] (g) -- (a) (g) -- (c) (g) -- (d);

\end{tikzpicture}
		\caption{}\label{fig:restricted-3}
 	\end{subfigure}
 	\begin{subfigure}[b]{.24\linewidth}
 		\centering
 		\begin{tikzpicture}[scale = 0.4]

	\draw[fill=gray!60] (90:3) -- (150:1.6) -- (150:.8) -- (90:3);
	\draw[fill=gray!60] (0,0) -- (150:.8) -- (210:3) -- (0,0);
	\draw[fill=gray!60] (210:3) -- (-90:1.6) -- (-90:.8) -- (210:3);
	\draw[fill=gray!60] (0,0) -- (-90:.8) -- (-30:3) -- (0,0);
	\draw[fill=gray!60] (-30:3) -- (30:1.6) -- (30:.8) -- (-30:3);
	\draw[fill=gray!60] (0,0) -- (30:.8) -- (90:3) -- (0,0);	
		
	\node (a) [black vertex] at (0,0) {};
	\node (b) [black vertex] at (90:3) {};
	\node (c) [black vertex] at (210:3) {};
	\node (d) [black vertex] at (-30:3) {};
        \draw[edge] (a) -- (b) (a) -- (c) (a) -- (d) (b) to [bend right=30] (c) (c) to [bend right=30] (d) (d) to [bend right=30] (b);
        
        \node (e1) [black vertex] at (150:.8) {};
        \draw[edge] (e1) -- (a) (e1) -- (b) (e1) -- (c);
        \node (f1) [black vertex] at (150:1.6) {};
        \draw[edge] (f1) -- (e1) (f1) -- (b) (f1) -- (c);

        \node (e2) [black vertex] at (-90:.8) {};
        \draw[edge] (e2) -- (a) (e2) -- (c) (e2) -- (d);
        \node (f2) [black vertex] at (-90:1.6) {};
        \draw[edge] (f2) -- (e2) (f2) -- (c) (f2) -- (d);

        \node (e3) [black vertex] at (30:.8) {};
        \draw[edge] (e3) -- (a) (e3) -- (d) (e3) -- (b);
        \node (f3) [black vertex] at (30:1.6) {};
        \draw[edge] (f3) -- (e3) (f3) -- (d) (f3) -- (b);

\end{tikzpicture}
		\caption{}\label{fig:restricted-4}
 	\end{subfigure}	
 	
 	\bigskip	
 	\begin{subfigure}[b]{.24\linewidth}
 		\centering
 		\begin{tikzpicture}[scale = 0.4]

	\draw[fill=gray!60] (0,0) -- (180:1.2) -- (210:3) -- (0,0);
	\draw[fill=gray!60] (210:3) -- (-90:1.6) -- (-90:.8) -- (210:3);
	\draw[fill=gray!60] (0,0) -- (-90:.8) -- (-30:3) -- (0,0);
	\draw[fill=gray!60] (-30:3) -- (30:1.6) -- (30:.8) -- (-30:3);
	\draw[fill=gray!60] (0,0) -- (30:.8) -- (90:3) -- (0,0);	
		
	\node (a) [black vertex] at (0,0) {};
	\node (b) [black vertex] at (90:3) {};
	\node (c) [black vertex] at (210:3) {};
	\node (d) [black vertex] at (-30:3) {};
        \draw[edge] (a) -- (b) (a) -- (c) (a) -- (d) (b) to [bend right=30] (c) (c) to [bend right=30] (d) (d) to [bend right=30] (b);
        
        \node (e1) [black vertex] at (150:1.6) {};
        \draw[edge] (e1) -- (a) (e1) -- (b) (e1) -- (c);
        \node (f1) [black vertex] at (180:1.2) {};
	\draw[edge] (f1) -- (e1) (f1) -- (a) (f1) -- (c);

        \node (e2) [black vertex] at (-90:.8) {};
        \draw[edge] (e2) -- (a) (e2) -- (c) (e2) -- (d);
        \node (f2) [black vertex] at (-90:1.6) {};
        \draw[edge] (f2) -- (e2) (f2) -- (c) (f2) -- (d);

        \node (e3) [black vertex] at (30:.8) {};
        \draw[edge] (e3) -- (a) (e3) -- (d) (e3) -- (b);
        \node (f3) [black vertex] at (30:1.6) {};
        \draw[edge] (f3) -- (e3) (f3) -- (d) (f3) -- (b);

\end{tikzpicture}
		\caption{}\label{fig:restricted-5}
 	\end{subfigure}
 	\begin{subfigure}[b]{.24\linewidth}
 		\centering
 		\begin{tikzpicture}[scale = 0.4]

	\draw[fill=gray!60] (150:1.6) -- (180:1.2) -- (0,0) -- (150:1.6);
	\draw[fill=gray!60] (210:3) -- (0,0) -- (-120:1.2) -- (210:3);
	\draw[fill=gray!60] (0,0) -- (-90:1.6) -- (-30:3) -- (0,0);
	\draw[fill=gray!60] (-30:3) -- (30:1.6) -- (30:.8) -- (-30:3);
	\draw[fill=gray!60] (0,0) -- (30:.8) -- (90:3) -- (0,0);	
		
	\node (a) [black vertex] at (0,0) {};
	\node (b) [black vertex] at (90:3) {};
	\node (c) [black vertex] at (210:3) {};
	\node (d) [black vertex] at (-30:3) {};
        \draw[edge] (a) -- (b) (a) -- (c) (a) -- (d) (b) to [bend right=30] (c) (c) to [bend right=30] (d) (d) to [bend right=30] (b);
        
        \node (e1) [black vertex] at (150:1.6) {};
        \draw[edge] (e1) -- (a) (e1) -- (b) (e1) -- (c);
        \node (f1) [black vertex] at (180:1.2) {};
	\draw[edge] (f1) -- (e1) (f1) -- (a) (f1) -- (c);

        \node (e2) [black vertex] at (-90:1.6) {};
        \draw[edge] (e2) -- (a) (e2) -- (c) (e2) -- (d);
        \node (f2) [black vertex] at (-120:1.2) {};
        \draw[edge] (f2) -- (e2) (f2) -- (c) (f2) -- (a);

        \node (e3) [black vertex] at (30:.8) {};
        \draw[edge] (e3) -- (a) (e3) -- (d) (e3) -- (b);
        \node (f3) [black vertex] at (30:1.6) {};
        \draw[edge] (f3) -- (e3) (f3) -- (d) (f3) -- (b);

\end{tikzpicture}
		\caption{}\label{fig:restricted-6}
 	\end{subfigure}
 	\begin{subfigure}[b]{.24\linewidth}
 		\centering
 		\begin{tikzpicture}[scale = 0.4]

	\draw[fill=gray!60] (150:1.6) -- (180:1.2) -- (0,0) -- (150:1.6);
	\draw[fill=gray!60] (210:3) -- (0,0) -- (-90:1.6) -- (210:3);
	\draw[fill=gray!60] (0,0) -- (-60:1.2) -- (-30:3) -- (0,0);
	\draw[fill=gray!60] (-30:3) -- (30:1.6) -- (30:.8) -- (-30:3);
	\draw[fill=gray!60] (0,0) -- (30:.8) -- (90:3) -- (0,0);	
		
	\node (a) [black vertex] at (0,0) {};
	\node (b) [black vertex] at (90:3) {};
	\node (c) [black vertex] at (210:3) {};
	\node (d) [black vertex] at (-30:3) {};
        \draw[edge] (a) -- (b) (a) -- (c) (a) -- (d) (b) to [bend right=30] (c) (c) to [bend right=30] (d) (d) to [bend right=30] (b);
        
        \node (e1) [black vertex] at (150:1.6) {};
        \draw[edge] (e1) -- (a) (e1) -- (b) (e1) -- (c);
        \node (f1) [black vertex] at (180:1.2) {};
	\draw[edge] (f1) -- (e1) (f1) -- (a) (f1) -- (c);

        \node (e2) [black vertex] at (-90:1.6) {};
        \draw[edge] (e2) -- (a) (e2) -- (c) (e2) -- (d);
        \node (f2) [black vertex] at (-60:1.2) {};
        \draw[edge] (f2) -- (e2) (f2) -- (d) (f2) -- (a);

        \node (e3) [black vertex] at (30:.8) {};
        \draw[edge] (e3) -- (a) (e3) -- (d) (e3) -- (b);
        \node (f3) [black vertex] at (30:1.6) {};
        \draw[edge] (f3) -- (e3) (f3) -- (d) (f3) -- (b);

\end{tikzpicture}
		\caption{}\label{fig:restricted-7}
 	\end{subfigure}
 	\begin{subfigure}[b]{.24\linewidth}
 		\centering
 		\begin{tikzpicture}[scale = 0.4]

	\draw[fill=gray!60] (150:1.6) -- (120:1.2) -- (0,0) -- (150:1.6);
	\draw[fill=gray!60] (210:3) -- (0,0) -- (-90:1.6) -- (210:3);
	\draw[fill=gray!60] (0,0) -- (-60:1.2) -- (-30:3) -- (0,0);
	\draw[fill=gray!60] (-30:3) -- (30:1.6) -- (30:.8) -- (-30:3);
	\draw[fill=gray!60] (0,0) -- (30:.8) -- (90:3) -- (0,0);	
		
	\node (a) [black vertex] at (0,0) {};
	\node (b) [black vertex] at (90:3) {};
	\node (c) [black vertex] at (210:3) {};
	\node (d) [black vertex] at (-30:3) {};
        \draw[edge] (a) -- (b) (a) -- (c) (a) -- (d) (b) to [bend right=30] (c) (c) to [bend right=30] (d) (d) to [bend right=30] (b);
        
        \node (e1) [black vertex] at (150:1.6) {};
        \draw[edge] (e1) -- (a) (e1) -- (b) (e1) -- (c);
        \node (f1) [black vertex] at (120:1.2) {};
	\draw[edge] (f1) -- (e1) (f1) -- (a) (f1) -- (b);

        \node (e2) [black vertex] at (-90:1.6) {};
        \draw[edge] (e2) -- (a) (e2) -- (c) (e2) -- (d);
        \node (f2) [black vertex] at (-60:1.2) {};
        \draw[edge] (f2) -- (e2) (f2) -- (d) (f2) -- (a);

        \node (e3) [black vertex] at (30:.8) {};
        \draw[edge] (e3) -- (a) (e3) -- (d) (e3) -- (b);
        \node (f3) [black vertex] at (30:1.6) {};
        \draw[edge] (f3) -- (e3) (f3) -- (d) (f3) -- (b);

\end{tikzpicture}
		\caption{}\label{fig:restricted-8}
 	\end{subfigure}
 	
 	\bigskip	
 	\begin{subfigure}[b]{.24\linewidth}
 		\centering
 		\begin{tikzpicture}[scale = 0.4]

	\draw[fill=gray!60] (150:1.6) -- (120:1.2) -- (0,0) -- (150:1.6);
	\draw[fill=gray!60] (210:3) -- (0,0) -- (-90:1.6) -- (210:3);
	\draw[fill=gray!60] (0,0) -- (-60:1.2) -- (-30:3) -- (0,0);
	\draw[fill=gray!60] (-30:3) -- (30:1.6) -- (0:1.2) -- (-30:3);
	\draw[fill=gray!60] (0,0) -- (30:1.6) -- (90:3) -- (0,0);	
		
	\node (a) [black vertex] at (0,0) {};
	\node (b) [black vertex] at (90:3) {};
	\node (c) [black vertex] at (210:3) {};
	\node (d) [black vertex] at (-30:3) {};
        \draw[edge] (a) -- (b) (a) -- (c) (a) -- (d) (b) to [bend right=30] (c) (c) to [bend right=30] (d) (d) to [bend right=30] (b);
        
        \node (e1) [black vertex] at (150:1.6) {};
        \draw[edge] (e1) -- (a) (e1) -- (b) (e1) -- (c);
        \node (f1) [black vertex] at (120:1.2) {};
	\draw[edge] (f1) -- (e1) (f1) -- (a) (f1) -- (b);

        \node (e2) [black vertex] at (-90:1.6) {};
        \draw[edge] (e2) -- (a) (e2) -- (c) (e2) -- (d);
        \node (f2) [black vertex] at (-60:1.2) {};
        \draw[edge] (f2) -- (e2) (f2) -- (d) (f2) -- (a);

        \node (e3) [black vertex] at (30:1.6) {};
        \draw[edge] (e3) -- (a) (e3) -- (d) (e3) -- (b);
        \node (f3) [black vertex] at (0:1.2) {};
        \draw[edge] (f3) -- (e3) (f3) -- (d) (f3) -- (a);

\end{tikzpicture}
		\caption{}\label{fig:restricted-9}
 	\end{subfigure}
 	\begin{subfigure}[b]{.24\linewidth}
 		\centering
 		\begin{tikzpicture}[scale = 0.4]

	\draw[fill=gray!60] (90:3) -- (150:1.6) -- (0,0) -- (90:3);
	\draw[fill=gray!60] (150:1.6) -- (180:1.2) -- (210:3) -- (150:1.6);
	\draw[fill=gray!60] (210:3) -- (0,0) -- (-90:1.6) -- (210:3);
	\draw[fill=gray!60] (-90:1.6) -- (-60:1.2) -- (-30:3) -- (-90:1.6);
	\draw[fill=gray!60] (-30:3) -- (30:1.6) -- (0,0) -- (-30:3);
	\draw[fill=gray!60] (30:1.6) -- (60:1.2) -- (90:3) -- (30:1.6);	
		
	\node (a) [black vertex] at (0,0) {};
	\node (b) [black vertex] at (90:3) {};
	\node (c) [black vertex] at (210:3) {};
	\node (d) [black vertex] at (-30:3) {};
        \draw[edge] (a) -- (b) (a) -- (c) (a) -- (d) (b) to [bend right=30] (c) (c) to [bend right=30] (d) (d) to [bend right=30] (b);
        
        \node (e1) [black vertex] at (150:1.6) {};
        \draw[edge] (e1) -- (a) (e1) -- (b) (e1) -- (c);
        \node (f1) [black vertex] at (180:1.2) {};
	\draw[edge] (f1) -- (e1) (f1) -- (a) (f1) -- (c);

        \node (e2) [black vertex] at (-90:1.6) {};
        \draw[edge] (e2) -- (a) (e2) -- (c) (e2) -- (d);
        \node (f2) [black vertex] at (-60:1.2) {};
        \draw[edge] (f2) -- (e2) (f2) -- (d) (f2) -- (a);

        \node (e3) [black vertex] at (30:1.6) {};
        \draw[edge] (e3) -- (a) (e3) -- (d) (e3) -- (b);
        \node (f3) [black vertex] at (60:1.2) {};
        \draw[edge] (f3) -- (e3) (f3) -- (a) (f3) -- (b);

\end{tikzpicture}
		\caption{}\label{fig:restricted-10}
 	\end{subfigure}
 	\begin{subfigure}[b]{.24\linewidth}
 		\centering
 		\begin{tikzpicture}[scale = 0.4]

	\draw[fill=gray!60] (90:3) -- (0,0) -- (150:.8) -- (90:3);
	\draw[fill=gray!60] (150:1.6) -- (150:.8) -- (210:3) -- (150:1.6);
	\draw[fill=gray!60] (0,0) -- (-90:1.2) -- (210:3) -- (0,0);
	\draw[fill=gray!60] (0,0) -- (30:1.2) -- (-30:3) -- (0,0);
		
	\node (a) [black vertex] at (0,0) {};
	\node (b) [black vertex] at (90:3) {};
	\node (c) [black vertex] at (210:3) {};
	\node (d) [black vertex] at (-30:3) {};
        \draw[edge] (a) -- (b) (a) -- (c) (a) -- (d) (b) to [bend right=30] (c) (c) to [bend right=30] (d) (d) to [bend right=30] (b);
        
        \node (e) [black vertex] at (150:.8) {};
        \draw[edge] (e) -- (a) (e) -- (b) (e) -- (c);
        \node (f) [black vertex] at (150:1.6) {};
        \draw[edge] (f) -- (e) (f) -- (b) (f) -- (c);
        \node (g) [black vertex] at (-90:1.2) {};
        \draw[edge] (g) -- (a) (g) -- (c) (g) -- (d);
        \node (h) [black vertex] at (30:1.2) {};
        \draw[edge] (h) -- (a) (h) -- (d) (h) -- (b);
\end{tikzpicture}
		\caption{}\label{fig:restricted-11}
 	\end{subfigure}
 	\begin{subfigure}[b]{.24\linewidth}
 		\centering
 		\begin{tikzpicture}[scale = 0.4]

	\draw[fill=gray!60] (90:3) -- (150:1.6) -- (0,0) -- (90:3);
	\draw[fill=gray!60] (150:1.6) -- (180:1.2) -- (210:3) -- (150:1.6);
	\draw[fill=gray!60] (0,0) -- (-90:1.2) -- (210:3) -- (0,0);
	\draw[fill=gray!60] (0,0) -- (30:1.2) -- (-30:3) -- (0,0);
			
	\node (a) [black vertex] at (0,0) {};
	\node (b) [black vertex] at (90:3) {};
	\node (c) [black vertex] at (210:3) {};
	\node (d) [black vertex] at (-30:3) {};
        \draw[edge] (a) -- (b) (a) -- (c) (a) -- (d) (b) to [bend right=30] (c) (c) to [bend right=30] (d) (d) to [bend right=30] (b);
        
        \node (e) [black vertex] at (150:1.6) {};
        \draw[edge] (e) -- (a) (e) -- (b) (e) -- (c);
        \node (f) [black vertex] at (180:1.2) {};
        \draw[edge] (f) -- (e) (f) -- (a) (f) -- (c);
        \node (g) [black vertex] at (-90:1.2) {};
        \draw[edge] (g) -- (a) (g) -- (c) (g) -- (d);
        \node (h) [black vertex] at (30:1.2) {};
        \draw[edge] (h) -- (a) (h) -- (d) (h) -- (b);
\end{tikzpicture}
		\caption{}\label{fig:restricted-12}
 	\end{subfigure} 	
 	\caption{Restricted and super restricted planar $3$-trees of the 
 		proof of Proposition~\ref{prop:restricted-3-tree}.}\label{fig:restricted-3-tree}	
\end{figure}
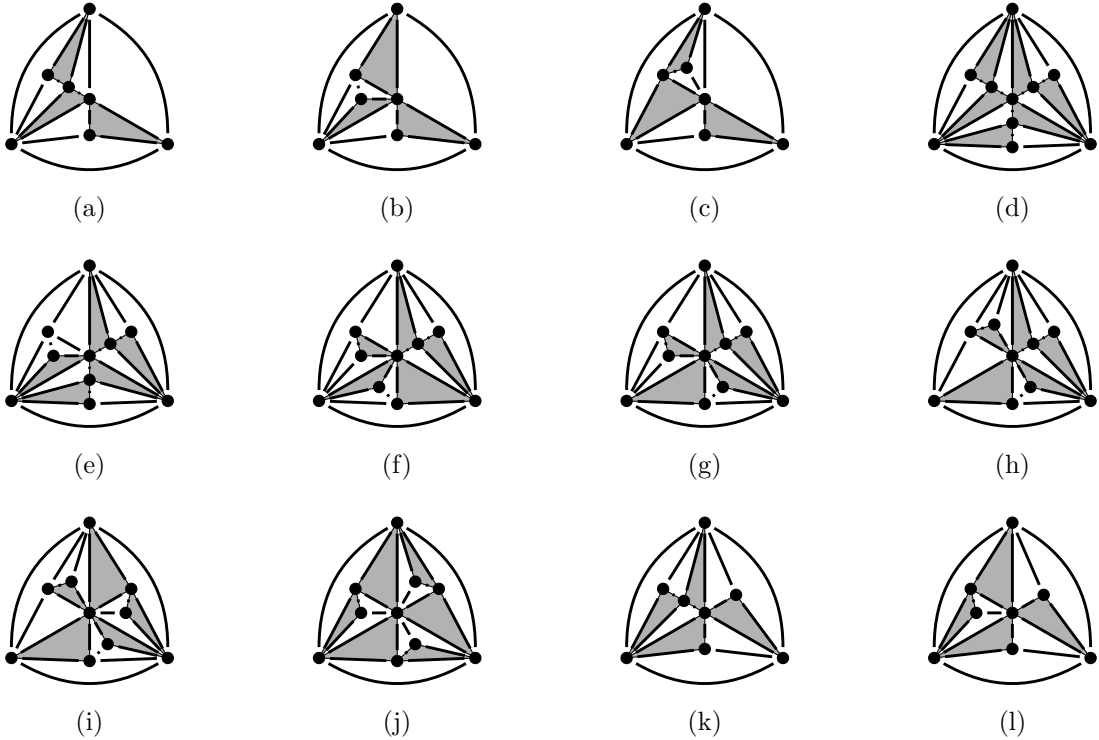

\section{3-Trees}\label{section:tuza-3-trees}

In this section, we prove that~$\tau(G)\leq \frac{9}{5}\,\nu(G) + \frac{1}{5}$ for every \(3\)-tree \(G\). 
For a graph \(G\), we say that the pair~$(X,Y)$ is a~$\frac{9}{5}$-\emph{TP} of~$G$ 
if~$X$ is a triangle transversal, $Y$ is a triangle packing of \(G\), 
and~${|X| \leq \frac{9}{5} |Y| + \frac{1}{5}}$.
If~$G$ has a~$\frac{9}{5}$-TP, then~$\tau(G)\leq \frac{9}{5}\,\nu(G)+\frac{1}{5}$.

Let~${(T, \mathcal{V},r)}$ be a rooted tree decomposition of a graph~$G$.
For a node~${t \in V(T) \setminus \{r\}}$, 
we denote by~$R(t)$ the set of all representatives of the descendants of~$t$ in $T$.
Note that the representative of~\(t\) is also in \(R(t)\).
Recall that~$S(t)$ is the set of successors of~$t$.
For every triple of vertices~${\Delta \subseteq V_t}$,
let~${S^{\Delta}(t)=\{t' \in S(t)\colon V_{t'} \cap V_t = \Delta\}}$.
When $t$ is clear from the context, we simply write~$S^{\Delta}$.

Our proof relies on the analysis of nodes of $T$ with small height,
and guarantees that a minimal counterexample has a particular configuration.

\begin{theorem}\label{thm:tuza-3-trees}
  If~$G$ is a 3-tree, then~$\tau(G)\leq \frac{9}{5}\,\nu(G) + \frac{1}{5}$.
\end{theorem}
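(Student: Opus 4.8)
We argue by contradiction. Suppose the statement fails, and let $G$ be a $3$-tree with $\tau(G)>\frac{9}{5}\,\nu(G)+\frac{1}{5}$ minimizing $|V(G)|$. Since $\tau(K_3)=\nu(K_3)=1$ and $\tau(K_4)=2=\frac{9}{5}\cdot 1+\frac{1}{5}$, both $K_3$ and $K_4$ admit a $\frac95$-TP, so $|V(G)|\ge 5$. Fix a rooted tree decomposition $(\calV,T,r)$ of $G$: its bags are exactly the $K_4$'s arising in the construction of $G$, adjacent bags share a triangle, and if $t$ is a leaf then its representative $z_t$ has degree $3$ and $N(z_t)=V_t\cap V_{p(t)}$ by Remark~\ref{remark:leaf-degree}. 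As $|V(G)|\ge 5$, the tree $T$ has nodes of positive height.

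The plan is to peel off a small planar $3$-tree that hangs low in $T$. For $t\neq r$, write $\Delta_t=V_t\cap V_{p(t)}$, let $H_t=G[R(t)\cup\Delta_t]$ and $G_t=G-R(t)$. The tree-decomposition axioms imply that $G$ has no edge between $R(t)$ and $V(G_t)\setminus\Delta_t$; hence $V(H_t)\cap V(G_t)=\Delta_t$, every triangle of $G$ lies entirely inside $H_t$ or entirely inside $G_t$, and $G_t$ is a $3$-tree on fewer vertices, so by minimality it has a $\frac95$-TP $(X',Y')$. The $3$-tree $H_t$ can be built starting from the $K_4$ $V_t$, and when it is planar its external facial triangle is $\Delta_t$. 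Suppose we can pick $t$ so that $H_t$ is a restricted or super restricted planar $3$-tree with external facial triangle $\Delta_t$ (in the sense of Proposition~\ref{prop:restricted-3-tree}), and let $f$ be its number of faces. Then, on one hand, $X'$ meets the triangle $\Delta_t$ of $G_t$ in some edge $e_0$, and, since the dual of the planar triangulation $H_t$ is a bridgeless cubic graph, every edge of $H_t$ lies in a minimum triangle transversal; so $H_t$ has a minimum transversal $X_t$ with $e_0\in X_t$ and $|X_t|=\tau(H_t)=|V(H_t)|-2=f/2$ by Lemma~\ref{lem:tuza-planartri-tau-upper-planar}, whence $X'\cup X_t$ is a transversal of $G$ of size at most $|X'|+f/2-1$. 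On the other hand, Proposition~\ref{prop:restricted-3-tree} gives a packing $\calP$ of facial triangles of $H_t$ containing $\Delta_t$ with $|\calP|\ge\lceil f/3\rceil$ (and $|\calP|=5$ if $H_t$ is super restricted); as the triangles in $\calP\setminus\{\Delta_t\}$ are edge-disjoint from $\Delta_t$, they use no edge of $G_t$, so $Y'\cup(\calP\setminus\{\Delta_t\})$ is a packing of $G$ of size $|Y'|+|\calP|-1$.

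Using $|X'|\le\frac95|Y'|+\frac15$, the pair above is a $\frac95$-TP of $G$ as soon as $f/2-1\le\frac95(\lceil f/3\rceil-1)$, which is a routine check for every even $f$ with $8\le f\le16$ (the possible face counts of a restricted $3$-tree); this contradicts the choice of $G$. Thus everything comes down to choosing $t$ so that $H_t$ is restricted or super restricted, and I expect this to be the main obstacle. Working with a node $t$ of small height and the sets $S^{\Delta}(t)$ for the faces $\Delta$ of $V_t$ other than $\Delta_t$, one eliminates the remaining possibilities by reductions: if $G_t$ has a maximum packing leaving an edge of some $N(z_t)$ unused, then adding a single edge to $X'$ and a single triangle to $Y'$ already yields a $\frac95$-TP of $G$ (with $\frac45$ to spare), a contradiction; the non-planar local configurations---two successors attaching to the same face of $V_t$, and their iterates---are ruled out by explicit small $\frac95$-TPs built in the style of the reducing triples of Section~\ref{section:tuza-partial-6-trees}; and configurations in which a face of $V_t$ carries too much structure below it reduce to a smaller $3$-tree. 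What is left is precisely that a suitably chosen $H_t$ is one of the graphs of Figure~\ref{fig:restricted-3-tree}.

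The other delicate point, besides this case analysis, is the interface bookkeeping on $\Delta_t$ used above: it is essential that $X'$ and $X_t$ be made to share an edge of $\Delta_t$, and that the local packing $\calP$ \emph{contain} $\Delta_t$ (so that discarding $\Delta_t$ leaves $Y'$ intact and the two packings do not collide on the edges of $\Delta_t$); one must also be careful that the lone unit of slack $\frac15$, already tight at the $K_4$ base case, is not needed a second time.
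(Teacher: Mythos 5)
Your overall skeleton does match the final step of the paper's argument: peel off a pendant piece at a low node $t$, get a facial-triangle packing containing the interface triangle via Proposition~\ref{prop:restricted-3-tree}, glue it to a $\frac95$-TP of $G-R(t)$, and check an arithmetic inequality. Your transversal bookkeeping is even a nice variant: instead of the paper's explicit construction (its bound $|X'|+\min\{5+k,1+m+2k\}$), you force a minimum transversal of the pendant planar triangulation through an edge of $\Delta_t$ already in $X'$; this is legitimate, but note it needs the strengthening of Theorem~\ref{thm:petersen} that \emph{every} edge of a bridgeless cubic graph lies in a perfect matching (Sch\"onberger's theorem), which is true but not what the paper states or proves.

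The genuine gap is exactly the point you defer: you never establish that a minimal counterexample contains a node $t$ for which $H_t=G[R(t)\cup\Delta_t]$ is a restricted or super restricted planar $3$-tree, and the sketch you give for eliminating the bad configurations does not work as described. First, a face of $V_t$ carrying two successor branches is \emph{not} a non-planar configuration (two vertices joined to the same triangle give $K_5$ minus an edge, which is planar); it simply makes $r(H_t)>2$, so $H_t$ fails to be restricted. Second, such configurations apparently cannot be killed by ``explicit small $\frac95$-TPs in the style of Section~\ref{section:tuza-partial-6-trees}'': e.g.\ for two leaves $u_1,u_2$ attached to the same face $abd$, every triangle through $u_1$ or $u_2$ uses an edge of $abd\subseteq E(G-u_1-u_2)$, so the extra packing triangles you would like to add may collide with the packing $Y'$ of the smaller graph, and the naive count ($+2$ or $+3$ transversal edges versus at most one ``clean'' new triangle) does not meet the $\frac95$ ratio. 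The paper does not rule these configurations out: it only proves $|S^{\Delta}(t)|\le 2$ (Claim~\ref{claim:tuza-SDelta-cap-L-or-cap-L=empty}, which itself needs Claims~\ref{claim:tuza-minimal-h(t)=1-then-S(t)=1} and~\ref{claim:tuza-3tree-EDelta-subseteq-X'}), and when $|S^\Delta(t)|=2$ it \emph{keeps} the configuration, trims one branch per face to form the restricted graph $G^+$, compensates the packing with the triangles $v_iv_i'w_i$ from the trimmed branches, and then verifies a more delicate inequality over the parameters $k,k',m,m'$, with the super restricted case ($m'=2$, $|\calP^+|=5$) handled separately. Without an argument replacing this machinery, your proof is incomplete at its central step; your plain ratio check $f/2-1\le\frac95(\lceil f/3\rceil-1)$ only covers the case that never causes trouble.
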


\begin{proof}
  The statement holds if $|V(G)| \leq 6$. 
  Indeed, if~$|V(G)|=4$, then $\tau(G)=2$ and~$\nu(G) = 1$; 
  if~$|V(G)|=5$, then $\tau(G)=3$ and~$\nu(G) = 2$;
  and if~$|V(G)|=6$, then $\tau(G) \leq 4$ and~$\nu(G) = 3$.

  Suppose, for a contradiction, that the statement does not hold and 
  let~$G$ be a minimal counterexample.  Then $|V(G)| \geq 7$ and thus 
  every full tree decomposition of~$G$ of width 3 has at least~4 nodes. 
  Let~$(T, \mathcal{V},r)$ be a rooted tree decomposition of~$G$ of width~3,
  with $r$ being a node of degree~1 in $T$.  
  Because $|V(T)| \geq 4$ and $|S(r)| = 1$, we have $h(r) > 1$.
  
  In what follows, we present several claims regarding~\(G\) and~$(T, \mathcal{V},r)$.

  \begin{claim} \label{claim:tuza-minimal-h(t)=1-then-S(t)=1}
    Every node $t$ of $T$ with $h(t) = 1$ is such that $|S(t)|=1$.
  \end{claim}

  \begin{proof}
    Let~$S(t)=\{t_1,\ldots,t_k\}$ and, for a contradiction, suppose that $k \geq 2$.
    Let~$v_i$ be the representative of~$t_i$ for $i = 1,\ldots,k$.
    Let~$V_t=\{a,b,c,d\}$ with~$V_t \cap V_{p(t)}=\{b,c,d\}$.
    Since~$h(t) = 1$, 
    at least one triangle $\Delta$ in~$\{abc,abd,acd\}$ is such that~${S^\Delta \neq \emptyset}$.
    Let~${G'=G-R(t)}$ and note that~$G'$ is a~3-tree.
    By the minimality of~$G$, there exists a~$\frac{9}{5}$-TP $(X',Y')$ of~$G'$.
    If exactly one triangle~$\Delta$ in~$\{abc,abd,acd\}$, say~$\Delta=abc$, 
    is such that~$S^\Delta \neq \emptyset$ (Figure~\ref{fig:tuza-3trees-lemma-a}), then 
    ${\big(X' \cup \{ab,bc,ac\},Y' \cup  \{acv_1,abv_2\}\big)}$ is a~$\frac{9}{5}$-TP of~$G$, 
    because $\tau(G) \leq \tau(G') + 3 \leq \frac{9}{5}\,\nu(G') + \frac{16}{5}
    \leq \frac{9}{5}(\nu(G)-2) + \frac{16}{5} < \frac{9}{5}\,\nu(G) + \frac{1}{5}$.
    This is a contradiction, 
    so at most one triangle~$\Delta$ in~$\{abc,abd,acd\}$ is such that~$S^\Delta = \emptyset$.

    Assume, without loss of generality, that~${t_1 \in S^{abc}}$ and~${t_2 \in S^{abd}}$.
    Suppose that ${|S(t)|=2}$ and let~${e \in X' \cap E(bcd)}$.
    Without loss of generality, either \(e=bc\) or~\(e=cd\). 
    If~${e = bc}$, then let~${X=X' \cup \{ad,av_1,bv_2\}}$.
    If~${e = cd}$, then let~${X=X' \cup \{ab,cv_1,dv_2\}}$ (Figure~\ref{fig:tuza-3trees-lemma-b}).
    In both cases,~${\big(X, Y' \cup \{acv_1,abv_2\}\big)}$ is a~$\frac{9}{5}$-TP of~$G$, 
    a contradiction.
    Therefore $|S(t)| \geq 3$.    

    Assume, without loss of generality, that either~${t_3 \in S^{abd}}$ or~${t_3 \in S^{acd}}$.
    Note that ${E(bcd) \cap X' \neq \emptyset}$.
    Then~$\big(X' \cup \{ab,bc,cd,ac,bd,ad\}, Y' \cup \{acv_1,abv_2\,adv_3\}\big)$
    is a~$\frac{9}{5}$-TP of~$G$, a contradiction (Figure~\ref{fig:tuza-3trees-lemma-c}).
  \end{proof}
  
\begin{figure}[h]
  \centering
  \begin{subfigure}[b]{.3\linewidth}
    \centering
    \scalebox{.7}{\begin{tikzpicture}[scale = 0.6]

	\draw[draw=none, fill=gray!60] (0,0) -- (90:3) -- (120:1.5) -- (0,0);
	\draw[draw=none, fill=gray!60] (0,0) -- (180:1.5) -- (210:3) -- (0,0);
		
	\node (a) [black vertex] at (0,0) {};
	\node (b) [black vertex] at (90:3) {};
	\node (c) [black vertex] at (210:3) {};
	\node (d) [black vertex] at (-30:3) {};

	\node () [] at (.3,.3) {$a$};
	\node () [] at (90:3.6) {$b$};
	\node () [] at (210:3.4) {$c$};
	\node () [] at (-30:3.3) {$d$};

        \draw[edge] (a)  (b) (a)  (c) (a) -- (d) (b)  (c) (c) to [bend right=30] (d) (d) to [bend right=30] (b);
	\draw[edge,dashed] (a) -- (b) (b) to [bend right=30] (c) (c) -- (a);
        
        \node (e) [black vertex] at (120:1.5) {};
        \draw[edge] (e) -- (a) (e) -- (b) (e) to [bend right=20] (c);
        \node (f) [black vertex] at (180:1.5) {};
        \draw[edge] (f) -- (a) (f) to [bend left=20] (b) (f) -- (c);

\end{tikzpicture}}
    \caption{}\label{fig:tuza-3trees-lemma-a}
  \end{subfigure}
  \begin{subfigure}[b]{.3\linewidth}
    \centering
    \scalebox{.7}{\begin{tikzpicture}[scale = 0.6]

	\draw[draw=none, fill=gray!60] (0,0) -- (90:3) -- (30:1.6) -- (0,0);
	\draw[draw=none, fill=gray!60] (0,0) -- (150:1.6) -- (210:3) -- (0,0);
		
	\node (a) [black vertex] at (0,0) {};
	\node (b) [black vertex] at (90:3) {};
	\node (c) [black vertex] at (210:3) {};
	\node (d) [black vertex] at (-30:3) {};

	\node () [] at (0,-.4) {$a$};
	\node () [] at (90:3.6) {$b$};
	\node () [] at (210:3.4) {$c$};
	\node () [] at (-30:3.3) {$d$};

        \draw[edge]   (c) -- (a)   (d) to [bend right=30] (b);
        
        \node (e) [black vertex] at (150:1.6) {};
        \draw[edge]  (e) -- (b);
        \node (f) [black vertex] at (30:1.6) {};
        \draw[edge] (f) -- (a);

	\draw[edge,dashed] (c) to [bend right=30] (d) (a) -- (b)  (e) -- (c)  (f) -- (d);
	\draw[edge,dotted] (b) to [bend right=30] (c) (a) -- (d) (e) -- (a)  (f) -- (b);

\end{tikzpicture}}
    \caption{}\label{fig:tuza-3trees-lemma-b}
  \end{subfigure}
  \begin{subfigure}[b]{.3\linewidth}
    \centering
    \scalebox{.7}{\begin{tikzpicture}[scale = 0.6]

	\draw[draw=none, fill=gray!60] (0,0) -- (90:3) -- (30:1.6) -- (0,0);
	\draw[draw=none, fill=gray!60] (0,0) -- (150:1.6) -- (210:3) -- (0,0);
	\draw[draw=none, fill=gray!60] (0,0) -- (-90:1.6) -- (-30:3) -- (0,0);
		
	\node (a) [black vertex] at (0,0) {};
	\node (b) [black vertex] at (90:3) {};
	\node (c) [black vertex] at (210:3) {};
	\node (d) [black vertex] at (-30:3) {};

	\node () [] at (-65:.5) {$a$};
	\node () [] at (90:3.6) {$b$};
	\node () [] at (210:3.4) {$c$};
	\node () [] at (-30:3.3) {$d$};

	\node (e) [black vertex] at (150:1.6) {};
	\draw[edge]  (e) -- (b)  (e) -- (c)  (e) -- (a);
	\node (f) [black vertex] at (30:1.6) {};
	\draw[edge] (f) -- (a)  (f) -- (d) (f) -- (b);
	\node (g) [black vertex] at (-90:1.6) {};
	\draw[edge] (g) -- (a) (g) -- (c) (g) -- (d);

	\draw[edge,dashed] (c) to [bend right=30] (d) (a) -- (b)  (b) to [bend right=30] (c) (a) -- (d)    (c) -- (a)   (d) to [bend right=30] (b);
\end{tikzpicture}}
    \caption{}\label{fig:tuza-3trees-lemma-c}
  \end{subfigure}
  \caption{Illustrations of the nodes of height $1$ of the proof of Claim \ref{claim:tuza-minimal-h(t)=1-then-S(t)=1}.}
  \label{fig:tuza-3trees-lemma}
\end{figure}
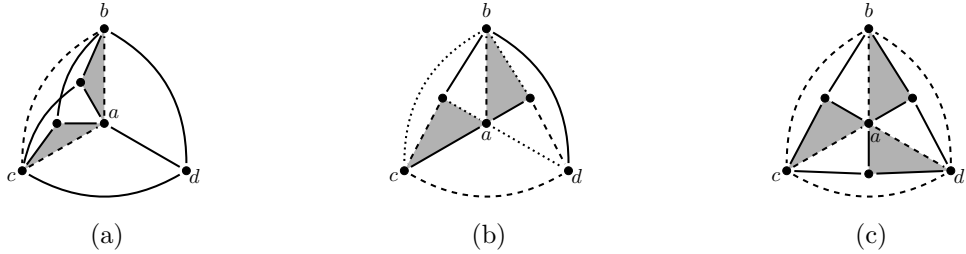

  Note that $h(r) > 2$.  
  Indeed, if $h(r) = 2$, then the only vertex $t$ in $S(r)$ is such that $h(t) = 1$, 
  and, by Claim~\ref{claim:tuza-minimal-h(t)=1-then-S(t)=1},~$|S(t)|=1$, 
  which would imply that $|V(T)| = 3$, a contradiction. 
  So $h(r) \geq 3$ and there is a node ${t \in V(T) \setminus \{r\}}$ such that~$h(t)=2$.
  Let~$L = \{\ell_1,\ldots,\ell_m\}$ be the set of successors of~$t$ that are leaves of~$T$,
  and let~$Q = S(t) \setminus L = \{q_1,\ldots,q_k\}$.
  Let~$u_i$ be the representative of~$\ell_i$ for~$i=1,\ldots,m$.
  By Claim~\ref{claim:tuza-minimal-h(t)=1-then-S(t)=1},
  $|S(q_i)| = 1$ for~$i=1,\ldots,k$.
  For every such~$i$, let~$S(q_i)=\{q'_i\}$, and let~$Q'=\{q'_1,\ldots, q'_k\}$.
  Let~$v_i$ be the representative of~$q_i$ 
  and~$v'_i$ be the representative of~$q'_i$ for~$i=1,\ldots,k$ (Figure~\ref{fig:names}).

  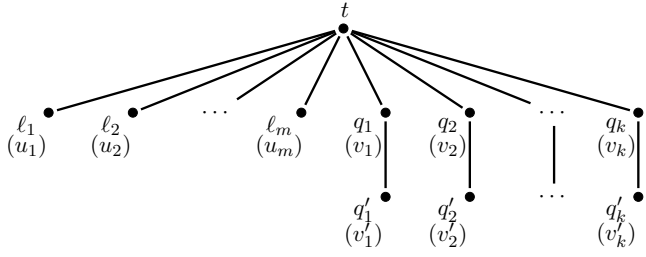
\begin{figure}[h]
    \centering
    \scalebox{0.8}{\begin{tikzpicture}[scale = .7]
	\node (t) [black vertex] 	at (0,1) {};
	\node (l1) [black vertex] 	at (-7,-1) {};
	\node (l2) [black vertex] 	at (-5,-1) {};
	\node (ldots) [] 		at (-3,-1) {$\cdots$};		
	\node (lm) [black vertex] 	at (-1,-1) {};
	\node (q1) [black vertex] 	at (1,-1) {};
	\node (q2) [black vertex] 	at (3,-1) {};
	\node (qdots) [] 		at (5,-1) {$\cdots$};
	\node (qk) [black vertex] 	at (7,-1) {};					
	\node (q1') [black vertex] 	at (1,-3) {};
	\node (q2') [black vertex] 	at (3,-3) {};
	\node (qdots') [] 		at (5,-3) {$\cdots$};
	\node (qk') [black vertex] 	at (7,-3) {};	
	
	\node () [] 	at ($(t)+(85:.45)$) {$t$};
	\node () [] 	at ($(l1)+(-150:.6)$) {$\ell_1$};
	\node () [] 	at ($(l2)+(-150:.6)$) {$\ell_2$};
	\node () [] 	at ($(lm)+(-150:.6)$) {$\ell_m$};
	\node () [] 	at ($(q1)+(-150:.6)$) {$q_1$};
	\node () [] 	at ($(q2)+(-150:.6)$) {$q_2$};
	\node () [] 	at ($(qk)+(-150:.6)$) {$q_k$};
	\node () [] 	at ($(q1')+(-150:.6)$) {$q_1'$};
	\node () [] 	at ($(q2')+(-150:.6)$) {$q_2'$};
	\node () [] 	at ($(qk')+(-150:.6)$) {$q_k'$};
	
	\node () [] 	at ($(l1)+(-150:.6)+(-90:.5)$) {$(u_1)$};
	\node () [] 	at ($(l2)+(-150:.6)+(-90:.5)$) {$(u_2)$};
	\node () [] 	at ($(lm)+(-150:.6)+(-90:.5)$) {$(u_m)$};
	\node () [] 	at ($(q1)+(-150:.6)+(-90:.5)$) {$(v_1)$};
	\node () [] 	at ($(q2)+(-150:.6)+(-90:.5)$) {$(v_2)$};
	\node () [] 	at ($(qk)+(-150:.6)+(-90:.5)$) {$(v_k)$};
	\node () [] 	at ($(q1')+(-150:.6)+(-90:.6)$) {$(v_1')$};
	\node () [] 	at ($(q2')+(-150:.6)+(-90:.6)$) {$(v_2')$};
	\node () [] 	at ($(qk')+(-150:.6)+(-90:.6)$) {$(v_k')$};	

        \draw[edge] (t) -- (l1);
        \draw[edge] (t) -- (l2);
        \draw[edge] (t) -- (ldots);
        \draw[edge] (t) -- (lm);
        \draw[edge] (t) -- (q1);
        \draw[edge] (t) -- (q2);
        \draw[edge] (t) -- (qdots);
        \draw[edge] (t) -- (qk);
        \draw[edge] (q1) -- (q1');
        \draw[edge] (q2) -- (q2');
        \draw[edge] (qdots) -- (qdots');
        \draw[edge] (qk) -- (qk');
\end{tikzpicture}}
    \caption{Notation for the descendants of $t$ in $T$ and their representatives.}
    \label{fig:names}
  \end{figure}

  Let~$V_t=\{a,b,c,d\}$ with~$V_t \cap V_{p(t)} = \{b,c,d\}$.

  \begin{claim} \label{claim:tuza-3tree-EDelta-subseteq-X'}
    Let~$\Delta$ be a triple of vertices in~$V_t$ with~$\Delta \neq bcd$.
    Let~$t' \in S^{\Delta}(t)$ and~$G'=G-R(t')$.
    If~$|S^{\Delta}(t)| \geq 3$, then
    there exists a minimum triangle transversal~$X'$ of~$G'$ with~$E(\Delta) \subseteq X'$.
  \end{claim}
  \begin{proof}
    Without loss of generality, $\Delta = abd$.
    Let~$X'$ be a minimum triangle transversal of~$G'$. 
    If~$E(\Delta) \subseteq X'$, then \(X'\) is the desired triangle transversal.
    So we may assume that~${E(\Delta) \not\subseteq X'}$ and, without loss of generality, 
    that ${S^{\Delta}(t) \setminus \{t'\}=\{\ell_1,\ldots,\ell_{m'}\} \cup \{q_1, \ldots, q_{k'}\}}$.
    Let~${F_i = \{au_i,bu_i,du_i\}}$ for~$i=1,\ldots,m'$;
    and~$H_i = E\big(G\big[V_{q_i} \cup V_{q'_i}\big]\big) \setminus E(abd)$ for~$i=1,\ldots,k'$.
    Since $E(\Delta) \not\subseteq X'$, we have that~$|X' \cap F_i|\geq 1$ for~$i=1,\ldots,m'$, 
    and~$|X' \cap H_i|\geq 2$ for~$i=1,\ldots,k'$.
    Therefore
    $X'' \ = \ \big(X' \setminus (\{F_i\colon i \in [m']\} \cup \{H_i\colon i \in [k']\})\big) 
                                               \cup E(abd) \cup \{v_iv'_i\colon i \in [k']\}$
    is a triangle transversal of~$G$, and
    $|X''| \ \leq \ |X'| - m' - 2k' + 3 - (|E(abd) \cap X'|) + k' 
             \ \leq \ |X'| - m' - k' + 2 \ \leq \ |X'|,$
    where the last inequality holds because~$m'+k'+1 = |S^{\Delta}(t)| \geq 3$.
    So \(X''\) is the desired transversal.
  \end{proof}

  \begin{claim}\label{claim:tuza-SDelta-cap-L-or-cap-L=empty}
    Let $\Delta$ be a triple of vertices in $V_t$ with $\Delta \neq bcd$.
    Then $|S^{\Delta}(t)| \leq 2$.
  \end{claim}

  \begin{proof}
    Without loss of generality,~${\Delta = abd}$.
    Suppose, for a contradiction, that~${|S^{\Delta}(t)| \geq 3}$.
    If there is a node in~${S^{\Delta}(t) \cap L}$, say~$\ell_1$, then let~${G'=G-u_1}$. 
    By Claim \ref{claim:tuza-3tree-EDelta-subseteq-X'},
    there is a minimum triangle transversal~$X'$ of~$G'$ with~$E(abd) \subseteq X'$. 
    By the minimality of~$G$, there is a triangle packing~$Y'$ in~$G'$ 
    such that $(X',Y')$ is a~$\frac{9}{5}$-TP of~$G'$.
    Then~$(X',Y')$ is also a~$\frac{9}{5}$-TP of~$G$, a contradiction.
    Similarly, if there is a node in~$S^{\Delta}(t) \cap Q$, say~$q_1$, then let~$G'=G-v_1-v'_1$.  
    By Claim \ref{claim:tuza-3tree-EDelta-subseteq-X'},
    there is a minimum triangle transversal~$X'$ of~$G'$ such that~$E(abd) \subseteq X'$. 
    By the minimality of~$G$, there is a triangle packing~$Y'$ in~$G'$ 
    such that $(X',Y')$ is a~$\frac{9}{5}$-TP of~$G'$.
    Now~$\big(X' \cup \{v_1v'_1\},Y' \cup \{v_1v'_1w_1\}\big)$ is a~$\frac{9}{5}$-TP of~$G$, 
    where~$w_1$ is a vertex adjacent to both~$v_1$ and~$v'_1$, a contradiction.
  \end{proof}

  Given a 4-clique~$K$ in~$G$ and a set~${A \subseteq E(G)}$ such that~${E(K) \cap A = \{e\}}$, 
  we denote by~${K \otimes A}$ the only edge in~$K$ 
  that does not share any vertex with~$e$. For instance,~${V_t \otimes \{bc\}=ad}$.
  
  \begin{claim}\label{claim:3-trees-transversals} 
    Let~${G'=G-R(t)}$.
    If~$X'$ is a triangle transversal of~$G'$, 
    then there exists a triangle transversal of~$G$ with at most size~$|X'|+\min\{5+k,1+m+2k\}$.
  \end{claim}

  \begin{proof}
    From $X'$, we will build two triangle transversals of~$G$,
    one of size at most \({|X'|+5+k}\), and another one of size at most \({|X'|+1+m+2k}\).
    Clearly ${X' \cap E(bcd) \neq \emptyset}$, 
    because~$X'$ is a triangle transversal of~$G'$ and~$bcd$ is a triangle in~$G'$.
    Hence~${|X' \cap E\big(G[V_t]\big)| \geq 1}$ 
    and therefore~$X_1 = X' \cup E\big(G[V_t]\big) \cup \{v_iv'_i\colon i \in [k]\}$ is 
    a triangle transversal of~$G$ with 
    ${|X_1| = |X'|+|E\big(G[V_t]\big)| - |X' \cap E\big(G[V_t]\big)|+k \leq |X'|+5+k}$.
    For the second transversal, without loss of generality, assume that~${bc \in X'}$.
    Let~${e_i = V_{\ell_i} \otimes \{bc,ad\}}$ for~${i=1,\ldots,m}$, and 
    let ${f_i = V_{q_i} \otimes \{bc,ad\}}$ and~${f'_i= V_{q'_i} \otimes \{bc,ad,f_i\}}$ for~${i=1,\ldots,k}$.
    Then the second triangle transversal of $G$ is 
    $X_2 = X' \cup \{bc,ad\} \cup {\{e_1,\ldots,e_m\}}
    \cup {\{f_1,\ldots,f_k\}} \cup {\{f'_1,\ldots,f'_k\}}$, 
    for which ${|X_2|=|X'|+1+m+2k}$. 
  \end{proof}

  Now, we may conclude the proof of the theorem.
  For every triple of vertices~$\Delta \subseteq V_t$ such that~$\Delta \neq bcd$,
  let~${h(\Delta)=1+\max\big\{h(t')\colon t' \in S^{\Delta}(t)\big\}}$,
  let \(k' = \big|\big\{\Delta \subseteq V_t\colon h(\Delta)=2\big\}\big|\)
  and \({m' = \big|\big\{\Delta \subseteq V_t\colon h(\Delta)=1\big\}\big|}\).
  Note that~\(k' \geq 1\) because $h(t) = 2$.  Also \(k'+m' \leq 3\).
  Let \(G^+\) be the graph obtained from \(G\big[V_t\cup R(t)\big]\) by 
  removing, for each triple of vertices~$\Delta \subseteq V_t$
  such that $\Delta \neq bcd$ and \(|S^\Delta(t)|=2\), 
  the vertices in \(R(x)\), for precisely one \(x\in S^\Delta(t)\)
  with $x \in L$ whenever possible.
  By Claim~\ref{claim:tuza-SDelta-cap-L-or-cap-L=empty}, 
  graph \(G^+\) is a restricted planar \(3\)-tree.
  Also, \(G^+\) has \(f = 4+4k'+2m'\) faces.
  By Proposition~\ref{prop:restricted-3-tree}, 
  \(G^+\) contains a triangle packing~\(\calP^+\) of facial triangles
  containing~\(bcd\) and such that \(|\calP^+|\geq\lceil f/3\rceil\),
  and if~\(G^+\) is super restricted, then~\(|\calP^+|=5\).
  Let \(Q^+\) be the set \(\big\{q_i\in Q \colon v_i\notin V(G^+)\big\}\).
  For each~\(q_i\in Q^+\), let~\(T_i = v_iv_i'w_i\), 
  where \(w_i\) is a vertex adjacent to both~\(v_i\) and~\(v_i'\),
  and let \(\calP = \calP^+\cup\{T_i \colon q_i\in Q^+\}\).
  Note that~\(|\calP| = |\calP^+| + (k-k')\).

  Let \(G'=G-R(t)\) and \((X',Y')\) be a \(\frac{9}{5}\)-TP of \(G'\).
  The only triangle in~\(\calP\) containing edges of~\(G'\) is~\(bcd\).
  Hence \(Y = Y'\cup \big(\calP\setminus\{bcd\}\big)\) is a triangle packing of \(G\)
  of size at least \(|Y'|+|\calP^+| - 1 + (k-k')\).
  By Claim~\ref{claim:3-trees-transversals}, 
  there exists a triangle transversal~$X$ of~$G$ with size at most $|X'|+\min\{5+k,1+m+2k\}$.

  If \(m' = 2\), then \(k'=1\) and \(G^+\) is super restricted.
  In this case, $|\calP^+| = 5$, and therefore
  \({|Y| \geq |Y'|+|\calP^+|-1 + (k-k') = |Y'| + 5 - 1 + k - 1 = |Y'| + 3 + k}\).
  Also, \({5(5+k) < 9(3+k)}\), thus
  $$ |X| \ \leq \ |X'| + 5+k \ < \ \frac{9}{5}(|Y'|+3+k) +\frac{1}{5} 
         \ \leq \ \frac95|Y|+\frac15, $$
  and $(X,Y)$ would be a $\frac95$-TP of $G$, a contradiction. 
  So \(m' \leq 1\).

  Now, a simple calculation shows that, 
  if \(k\in\{1,2,3,4,5,6\}\), \(k'\in\{1,2,3\}\), \(m\in\{0,1,2,3,4\}\), 
  and \(m'\in\{0,1\}\) are such that \(k\geq k'\geq \lceil k/2\rceil\), 
  \(m\geq m'\geq \lceil m/2\rceil\), and \(m'+k'\leq 3\),
  then we have 
  \[
    \min\{5+k,1+m+2k\}\leq \frac{9}{5}\left(\left\lceil \frac{f}{3}\right\rceil-1+(k-k')\right).
  \]
  Therefore, since \(|\mathcal{P}^+|\geq \lceil f/3\rceil\), we have
  \begin{eqnarray*}
    |X| &\leq&	|X'|+\min\{5+k,1+m+2k\} \\
        &\leq&	\frac{9}{5}|Y'| + \frac{1}{5} 
               + \frac{9}{5}\left(\left\lceil \frac{f}{3}\right\rceil-1+(k-k')\right) \\
        &\leq&	\frac{9}{5}\left(|Y'| + \left\lceil \frac{f}{3}\right\rceil-1+(k-k')\right) + \frac{1}{5} \\
        &\leq&	\frac{9}{5}|Y| + \frac{1}{5}.
  \end{eqnarray*}
  implying that $(X,Y)$ is a $\frac95$-TP of $G$, a contradiction. 
  This concludes the proof. 
\end{proof}

\section{Concluding remarks} \label{section:conclusion}

In this paper we present three results related to Tuza's Conjecture.
In Section~\ref{section:tuza-partial-6-trees}, we obtained a lemma (Lemma~\ref{lemma:tuza-tw6-main}) that extends Puleo's tools~\cite{Puleo15},
and allowed us to verify Tuza's Conjecture for graphs with treewidth at most \(6\).
Any minimal counterexample to Tuza's Conjecture is an irreducible robust graph, 
so Lemma~\ref{lemma:tuza-tw6-main} might help in achieving further results regarding this problem.
In Sections~\ref{section:tuza-planar-triangulation} and~\ref{section:tuza-3-trees}, 
we obtained stronger versions of Tuza's Conjecture for specific classes of graphs.
We believe that the techniques used here may also be useful to deal with other classes of graphs,
perhaps by introducing new ingredients.

\section*{Acknowlegments}

We would like to thank the reviewer for valuable comments.

\smallskip\noindent
This study was financed in part by the Coordenação de Aperfeiçoamento 
de Pessoal de Nível Superior - Brasil (CAPES) - Finance Code 001,
by Conselho Nacional de Desenvolvimento Científico e Tecnológico -- CNPq (Proc~456792/2014-7, 308116/2016-0,~423395/2018-1, and 423833/2018-9),
by Fundação de Amparo à Pesquisa do Estado do Rio de Janeiro -- FAPERJ (Proc.~211.305/2019),
by Fundação de Amparo à Pesquisa do Estado de São Paulo -- FAPESP (Proc.~2013/03447-6 and 2015/08538-5), and by Project MaCLinC of NUMEC/USP.

\end{document}